\documentclass[11pt]{amsart}
\usepackage{amssymb,amsmath,latexsym,times,enumitem,tikz, mathtools,ytableau}
\usepackage{chessfss, comment}
\usepackage{thmtools, thm-restate}
\usepackage{hyperref}

\newtheorem{prop}{Proposition}[section] 
\newtheorem{thm}[prop]{Theorem}
\newtheorem{conj}[prop]{Conjecture}
\newtheorem{lemma}[prop]{Lemma} 
\newtheorem{cor}[prop]{Corollary}
\theoremstyle{definition} 
\newtheorem{definition}[prop]{Definition}
\newtheorem{remark}[prop]{Remark}
\newtheorem{example}[prop]{Example}

\newcommand{\op}{\operatorname}
\newcommand{\Det}{\op{Det}}
\newcommand{\CCC}{\mathcal{C}}
\newcommand{\FF}{\mathbb{F}}
\newcommand{\inv}{^{-1}}
\newcommand{\im}{\op{Im}}
\renewcommand{\ker}{\op{Ker}}
\newcommand{\brac}[1]{\langle #1 \rangle}
\newcommand{\Span}[1]{\op{Span}\{ #1 \}}
\newcommand{\ZZ}{\mathbb{Z}}
\newcommand{\Symm}{\mathfrak{S}}
\newcommand{\Rook}{\raisebox{-7pt}{\BlackRookOnWhite}}
\newcommand{\Out}{\raisebox{-7pt}{\larger\BlackEmptySquare}}

\newcommand*{\vertbar}{\rule[-.5ex]{0.5pt}{2ex}}
\newcommand{\Inv}{\op{Inv}}
\newcommand{\GL}{\op{GL}}
\newcommand{\ol}{\overline}
\newcommand{\transpose}{^{\mathrm{T}}}
\newcommand{\wt}{\op{wt}}
\newcommand{\defn}{\textbf}

\begin{document}

\author[B.~Koprowski]{Brandon Koprowski}
\address{Department of Mathematics, North Carolina State University, Raleigh, NC, USA}
\email{bmkoprow@ncsu.edu}
\author[J.~B.~Lewis]{Joel Brewster Lewis}
\address{Department of Mathematics, George Washington University, Washington, DC, USA}
\email{jblewis@gwu.edu}

\title{Enumeration of Nondegenerate $2 \times (k+1) \times k$ Hypermatrices}
\date{February 25, 2026}

\begin{abstract}
We consider the problem of enumerating hypermatrices of format $2 \times (k + 1) \times k$ over a finite field that have nonzero hyperdeterminant and whose nonzero entries are restricted to a plane partition.  We conjecture an attractive product formula for the enumeration, and prove it in many cases.  In general, we show that the enumeration is given (up to a power of $q - 1$) by a polynomial in $q$ with nonnegative integer coefficients, whose value at $q = 1$ enumerates a natural family of three-dimensional rook placements.
\end{abstract}
\maketitle

\section{Introduction}

In this paper we attempt to generalize the classical $q$-rook theory of matrices to higher dimensions. We begin with a brief survey of the classical theory which will serve as a starting point for our generalization. 

Counting permutations as rook placements was first systematically studied by Kaplansky and Riordan \cite{kaplansky}, who showed a large family of permutation enumeration questions can be phrased in the terms of rook theory. Here a \defn{rook} can be thought of as a chess rook (which can attack vertically and horizontally) and a \defn{rook placement} is a placement of non-attacking rooks on a subset of a chessboard; for a more precise statement see Definition~\ref{def:rookPlacement}. 
Rook theory was further developed in a series of five papers by Goldman, Joichi, and White. In particular, their first paper \cite{goldman} developed a complete understanding of when two partition-shaped boards have the same number of rook placements for all numbers of rooks. The ``$q$" in $q$-rook theory was first studied by Garsia and Remmel \cite{garsia}, who introduced the \defn{$q$-rook number}
\[
R_k(B; q) = \sum_{C} q^{\op{inv}(C,B)}
\]
of a board $B$.  In this formula, the sum is over the set of rook placements $C$ of $k$ rooks on the partition-shaped board $B$, and $\op{inv}(C, B)$ is a certain \emph{inversion statistic} associated to each placement. 

In addition to rook numbers, one can also look at \defn{hit numbers} for a particular board contained in the $n$-by-$n$ square: the $k$th hit number is the number of $n$-rook placements on the square in which exactly $k$ rooks are placed in $B$. Garsia and Remmel were using the language of rook theory as a means to solve a separate problem but in the process discovered that the \emph{$q$-hit number} for partition-shaped boards is a polynomial with positive coefficients. However, they were unhappy with the proof of this fact and asked for others to provide a more satisfying explanation.
 
In response to this call for a better proof, Haglund \cite{haglund} gave an interpretation of $q$-rook and $q$-hit numbers in terms of matrices over finite fields with entries forced to be zero.\footnote{An alternative proof, via other methods, was given around the same time by Dworkin \cite{dworkin}.}  In particular, Haglund devised a way to relate each placement of $k$ rooks on a board $B$ to a collection of matrices over a finite field $\FF_q$ of rank $k$. He then used this to prove that the number of $n \times n$ matrices of rank $k$ with support corresponding to $B$, which he called $P_k(B)$, is related to the $q$-rook number $R_k(B; q)$ by 
\begin{equation}\label{eq:Haglund} P_k(B) = (q-1)^kq^{|B| - k}R_k(B; q\inv).\end{equation} 
In the particular case that $k = n$, the expression on both sides of this equation factors in an attractive way (see Theorem~\ref{thm:haglund}).  

The main goal of this paper is to generalize this connection between the number of rook placements and the number rank-$k$ matrices over a finite field to higher dimensions by enumerating \emph{hypermatrices}.  Our work is inspired by an unpublished note by Musiker and Yu \cite{musiker}, who showed that the number of hypermatrices (see Definition~\ref{def:hypermatrix}) over $\FF_q$ with nonzero \emph{hyperdeterminant}, the multidimensional analogue of the determinant, factors nicely in the $2\times 2 \times 2$ case. It is worth noting that there are at least three inequivalent ways to define a hyperdeterminant; we will be using Cayley's second definition, commonly denoted $\Det$, whose important properties are outlined below in Corollary~\ref{cor:zero}. If a hypermatrix has nonzero hyperdeterminant we say it is \defn{nondegenerate}.
We denote by $\GL_k(\FF_q)$ the general linear group of $k \times k$ invertible matrices over $\FF_q$. 

It is easy to show that
$|\GL_2(\FF_q)| = (q^2 -1)(q^2 - q)$.
If we let $[n]_q = 1 + q + q^2 + \cdots + q^{n-1}$ and $[n]!_q = [n]_q[n-1]_q \cdots [1]_q$ then we can write this more succinctly as \[|\GL_2(\FF_q)| = q(q-1)^2[2]!_q.\] 
Musiker and Yu proved the number of nondegenerate $2 \times 2 \times 2$  hypermatrices over $\FF_q$ is $$(q^4 - 1)(q^4 - q^3),$$ which led them to conjecture the number of nondegenerate hypermatrices of dimension $k + 1$ in the general $2 \times 2 \times \cdots \times 2$ case is $(q^{2^k} - 1)(q^{2^k} -q^{2^k -1})$. However, they were unsuccessful in even proving the $2\times 2 \times 2 \times 2$ case due to the fact that the hyperdeterminant is this case is unapproachable, having over $2$ million terms. 
 
It turns out that, rather than the ``cubic'' hypermatrices considered by Musiker and Yu, the easier hypermatrices to study are those of \defn{boundary format}, which are hypermatrices of shape 
 $$(k_1 + 1) \times (k_2 + 1) \times \cdots \times (k_r + 1),$$
 with $k_i = k_1 + k_2 + \cdots + k_{i-1} + k_{i+1} + \cdots + k_{r}$ for some $ 1 \leq i \leq r$.
It was shown by Aitken \cite{aitken} that for dimension $3$, the question of whether or not a boundary format hypermatrix is nondegenerate reduces to a question of whether or not a certain family of matrices are full rank (see Lemma~\ref{lem:faceSum}). This is helpful for enumeration but it is actually better than that: using this fact Aitken showed that there exists a free and transitive group action on the set $M_k(\FF_q)$ of nondegenerate $2\times (k+1) \times k$ hypermatrices with entries in $\FF_q$ and using this he was able to show that
$$|M_k(\FF_q)| = q^{k^2}(q-1)^{2k}[k+1]!_q[k]!_q.$$ We pick up where Aitken left off, refining this count for boundary format hypermatrices with restricted entries.  

We begin in Section~\ref{sec:preliminaries} by reviewing the basic definitions and concepts of hypermatrices and the hyperdeterminant. We then show there exists an analogous concept to matrices with an integer partition-shaped section forced to be zero for boundary format hypermatrices. Specifically, these are hypermatrices with a plane partition section forced to be zero. We also show that there is a unique maximal (by inclusion) plane partition such that nondegenerate hypermatrices exist when the entries of the plane partition are forced to be zero. 

In Section~\ref{sec:main conjecture}, we state our main conjecture (Conjecture~\ref{mainConjecture}): for $2 \times (k+1) \times k$ hypermatrices and a plane partition $P = \brac{\lambda, \mu}$ where $\lambda$ and $\mu$ are integer partitions of length $k$ (see Definition~\ref{def:plane partition} for this notational convention), the number of nondegenerate hypermatrices with support avoiding $P$ is 
\[ q^{k^2} (q-1)^{2k}[k+1 - \lambda_1]_q \cdots [2- \lambda_k]_q \cdot [k-\mu_1]_q \cdots [1 -\mu_k]_q.\] 
Although we are unable to prove our conjecture for all $P$, we show that it holds for several natural families of plane partitions, including plane partitions of one layer, the largest and smallest plane partitions for which the question is interesting, and all plane partitions in the case $k = 2$.

In Section~\ref{sec:4}, we provide further indirect evidence in favor of Conjecture~\ref{mainConjecture}.  We introduce a three-dimensional analogue of rook placements that we call \emph{hyperrook placements}.  We show that the number of hyperrook placements on a $2 \times (k+1) \times k$ board avoiding a plane partition $P = \brac{\lambda, \mu}$ is exactly the value
\[ (k+1 - \lambda_1) \cdots (2 - \lambda_k) \cdot (k - \mu_1) \cdots (1 - \mu_k)\] 
that one would predict by substituting $q = 1$ into Conjecture~\ref{mainConjecture}.  Further, in Section~\ref{sec:Bruhat}, we make a concrete connection between these hyperrook placements and nondegenerate hypermatrices concrete, giving an analogue of Haglund's theorem \eqref{eq:Haglund}.  Our main result (Theorem~\ref{thm:weak form of conjecture}) is to establish the following weak version of Conjecture~\ref{mainConjecture}: for every plane partition $P$, the number of nondegenerate hypermatrices over $\FF_q$ avoiding $P$ is given by a polynomial in $q$ that (up to factors of $q - 1$) has positive-integer coefficients and whose value at $q = 1$ is the number of hyperrook placements avoiding $P$.

We conclude the paper with Section~\ref{sec:final remarks} with some final remarks and open questions.

\section{Hypermatrices and Nondegeneracy}
\label{sec:preliminaries}

\begin{definition}\label{def:hypermatrix}A \defn{hypermatrix} of format $(k_1 + 1) \times (k_2+1) \times \cdots \times (k_r + 1)$ over a field $\FF$ is an $r$-dimensional array of elements $a_{i_1, \ldots, i_r} \in \FF$ with $1 \leq i_j \leq k_j+1$ for $j = 1, \ldots, r$.\end{definition}

 A \defn{slice} of a hypermatrix is an $(r-1)$-dimensional subarray which is obtained by holding one index constant. Furthermore, a \defn{slice in the $i$th direction} is the subarray where we hold the $i$th index constant. For instance, the $2 \times (k+1) \times k$ hypermatrix $(a_{i_1,i_2,i_3})$ with $1 \leq i_1 \leq 2$, \ $1 \leq i_2 \leq k+1$, and $1 \leq i_3 \leq k$ has two $(k+1) \times k$ slices in the first direction: $(a_{1,i_2,i_3})$ and $(a_{2,i_2,i_3})$. For a hypermatrix of dimension $(k_1 + 1) \times (k_2+1) \times \cdots \times (k_r + 1)$ we can see there are $k_i + 1$ slices in the $i$th direction, each of dimension
$$(k_1 + 1) \times \cdots \times (k_{i-1} + 1) \times (k_{i+1}+1) \times \cdots \times (k_r+1).$$ We call any two distinct slices in the same direction \defn{parallel}.

In this paper we will be especially concerned with hypermatrices of boundary format $2 \times (k+1) \times k$ which we will view as the pair $(M_1, M_2)$ of $(k+1) \times k$ slices in the $1$st direction. We will sometimes refer to $M_1$ as the \defn{front face} and $M_2$ as the \defn{back face}. For instance, the following is a representation of a $2 \times 4 \times 3$ hypermatrix:
\[ \begin{bmatrix}1&0&0\\0&1&0\\0&0&1\\0&0&0\end{bmatrix}\begin{bmatrix}0&0&0\\1&0&0\\0&1&0\\0&0&1\end{bmatrix}.\]

The collection of $(k_1 + 1) \times (k_2+1) \times \cdots \times (k_r + 1)$ hypermatrices over a field $\FF$ comes equipped with the natural action of the group \[G = \GL_{k_1 + 1}(\FF) \times \GL_{k_2 + 1}(\FF) \times \cdots \times \GL_{k_r + 1}(\FF),\] defined as follows. Let $H$ be a hypermatrix of format $(k_1 + 1) \times \cdots \times (k_r+1)$ with $A_i \in \GL_{k_i + 1}(\FF)$ and $M_1, M_2, \ldots, M_{k_i + 1}$ the slices of $H$ in the $i$th direction. Let $A = (a_{j_1,j_2})$ with $1\leq j_1,j_2 \leq k_i + 1$, then 
\[(1, \ldots, A_i, \ldots 1)\circ H\] is the hypermatrix with slices $M_1', M_2', \ldots, M_{k_i+1}'$ in the $i$th direction where 
\[M'_{n} = a_{n,1}M_1 + a_{n,2}M_2 + \cdots + a_{n,k_i + 1}M_{k_i + 1}.\]
In other words, $(1, \ldots, 1, A_i, 1, \ldots, 1) \in G$ acts on $H$ by taking linear combinations of the $k_i + 1$ slices in the $i$th direction in the coefficients of $A_i$. In the case of $2 \times (k+1) \times k$ hypermatrices represented as a pair of $(k+1) \times k$ matrices, the $\GL_{k+1}(\FF_q)$-action looks like simultaneous row operations on $M_1$ and $M_2$, and the $\GL_k(\FF_q)$-action looks like simultaneous column operations. For instance, 
\[\left(\begin{bmatrix} 1&0\\ 0&1 \end{bmatrix},\begin{bmatrix} a&0&0&0\\0&b&0&0\\ 0&0&c&0\\ 0&0&0&d\end{bmatrix}, \begin{bmatrix}0&0&x\\1&0&0\\ 0&1&0\end{bmatrix}\right) \circ \begin{bmatrix}1&0&0\\0&1&0\\0&0&1\\0&0&0\end{bmatrix}\begin{bmatrix}0&0&0\\1&0&0\\0&1&0\\0&0&1\end{bmatrix} = \begin{bmatrix}0&a&0\\ 0&0&b\\ cx&0&0\\ 0&0&0 \end{bmatrix} \begin{bmatrix} 0&0&0\\ 0&b&0\\ 0&0&c\\ dx&0&0\end{bmatrix}.\] 

There is a function on hypermatrices called the \defn{hyperdeterminant} that plays a similar role to the determinant for matrices. In particular we call hypermatrices with zero hyperdeterminant \defn{degenerate} and correspondingly hypermatrices with nonzero hyperdeterminant \defn{}{nondegenerate}.  To distiguish between the hyperdeterminant and matrix determinant we will denote the hyperdeterminant of a hypermatrix $M$ by $\Det(M)$ and determinant of a matrix $A$ by $\det(A)$. The formal definition of the hyperdeterminant is technical and outside the scope of this paper so instead we will describe its properties; for a complete definition see \cite[p.~444]{gkz}. Although the hyperdeterminant is defined for hypermatrices of any format, there are numerous formats for which every hypermatrix is degenerate, as the next theorem shows.

\begin{thm}[{\cite[Thm.~1.3]{gkz}}]
The hypdeterminant of format $(k_1 + 1) \times \cdots \times (k_r +1)$ is non-trivial if and only if 
\[k_j \leq \sum_{i \ne j} k_i\] for all $j = 1, \ldots, r$.
\end{thm}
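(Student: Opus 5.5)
This is the Gelfand--Kapranov--Zelevinsky theorem. I would prove it by realizing $\Det$ as the defining equation of the projective dual of the Segre variety $X = \mathbb{P}^{k_1}\times\cdots\times\mathbb{P}^{k_r} \subset \mathbb{P}(V_1\otimes\cdots\otimes V_r)$, where $\dim V_i = k_i+1$. The hyperdeterminant is by definition nontrivial exactly when the dual variety $X^\vee$ is a hypersurface. So the task is to compute $\dim X^\vee$ via the incidence (conormal) variety
\[ Z = \{(x,H) : x \in X,\ H \supseteq T_xX\}, \]
that is, the set of pairs where the hypermatrix $H$ is degenerate at the point $x=(x^{(1)},\dots,x^{(r)})$. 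Concretely, $H$ viewed as a multilinear form $F$ is degenerate at $x$ exactly when $F(x^{(1)},\dots,x^{(r)})$ vanishes together with all first partial derivatives in each factor.

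The key steps are as follows.

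\textbf{Step 1.} Compute $\dim Z$. Over each point of $X$ (of dimension $\sum k_i$), the tangent conditions are $1+\sum k_i$ independent linear conditions on $H$. So $\dim Z = N-1-1 = N-2$, where $N=\prod(k_i+1)$. Hence $X^\vee$ is a hypersurface if and only if the projection $Z\to \mathbb{P}^{N-1}$ is generically finite onto its image. Equivalently, a generic degenerate $H$ has only finitely many singular points $x$.

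\textbf{Step 2 (necessity).} Suppose $k_1 > \sum_{i\ge 2}k_i$. Fix $x^{(2)},\dots,x^{(r)}$. The conditions that $x$ be singular for $H$ reduce to requiring that $x^{(1)}$ lie in a kernel. First, the vector $F(\cdot,x^{(2)},\dots,x^{(r)})$ must vanish. Second, the partials $F(x^{(1)},\dots,\cdot,\dots)$ must vanish, giving $\sum_{i\ge2}k_i$ further linear conditions on $x^{(1)}$, modulo the first condition. This means the locus of degenerate $H$ is cut out by a determinantal condition on $H$. A direct dimension count shows the fibers of $Z\to X^\vee$ have positive dimension $k_1-\sum_{i\ge2}k_i$, so $X^\vee$ has codimension greater than one. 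Equivalently, I can exhibit every $H$ as degenerate by dimension counting: the $(\sum_{i\ge2}k_i+1)$ conditions on $x^{(1)}\in\mathbb{P}^{k_1}$ always have a solution.

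\textbf{Step 3 (sufficiency).} Assume the inequalities hold. I would show generic finiteness by exhibiting a single degenerate $H$ with an isolated singular point $x$ at which the Hessian of $F$ restricted to $X$ is nondegenerate. The criterion is the standard one: the fiber is finite near $(x,H)$ exactly when the Hessian quadratic form at $x$ on $T_xX$ is nondegenerate. Take $x = e_0\otimes\cdots\otimes e_0$. Then the Hessian is the block matrix whose $(i,j)$ block, for $i\ne j$, is the $k_i\times k_j$ matrix $b^{ij}$ of coefficients of $F$ on $e_0\otimes\cdots e_{a}^{(i)}\cdots e_{b}^{(j)}\cdots$, and whose diagonal blocks are zero. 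So I need a symmetric block matrix with zero diagonal blocks of sizes $k_1,\dots,k_r$ that is invertible.

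\textbf{Main obstacle.} The hardest part is realizing Step 3's combinatorial claim: such an invertible zero-diagonal block matrix exists if and only if no $k_j$ exceeds the sum of the others. I would argue via a perfect matching in the complete multipartite graph $K_{k_1,\dots,k_r}$, together with the diagonal-vertex variant if the total is odd. A perfect matching of the multipartite graph on $\sum k_i$ vertices exists under the polygon inequality when the total is even, by a greedy pairing of the largest part with the others. When the total is odd, I would use a triangle among three different parts; this uses a $3$-cycle permutation term, so the determinant expansion still has a nonzero monomial. Choosing the $b^{ij}$ generic then makes the determinant a nonzero polynomial, giving invertibility.
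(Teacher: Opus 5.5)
The paper gives no proof of this statement; it is quoted from \cite{gkz}. Your outline is the standard route to it: realize $\Det$ as the equation of the dual of the Segre variety, compute $\dim Z=N-2$, apply the Hessian criterion at $x=e_0\otimes\cdots\otimes e_0$, and reduce to a lemma on symmetric block matrices with zero diagonal blocks. The skeleton is sound, but one claim in Step 2 is false and must be removed.

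When $k_1>\sum_{i\ge 2}k_i$ it is \emph{not} true that every $H$ is degenerate. The dual variety then has codimension at least $2$, so a generic $H$ is nondegenerate. For example, in format $2\times 2\times 4$ a generic $F$ induces an isomorphism $V_1\otimes V_2\to V_3^*$, so $F(x^{(1)},x^{(2)},\cdot)\neq 0$ for every $x$. Your count only looks at the conditions on $x^{(1)}$. It ignores the $k_1+1$ conditions $F(\cdot,x^{(2)},\dots,x^{(r)})=0$, which constrain the other factors and generically have no solution, since $k_1+1>\sum_{i\ge2}k_i=\dim\prod_{i\ge2}\mathbb{P}^{k_i}$.

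The argument that does work is your fibre count, which you should make precise. Let $(x,H)\in Z$. Each linear form $F(y,x^{(2)},\dots,\cdot_i,\dots,x^{(r)})$ vanishes automatically at $x^{(i)}$, so it imposes at most $k_i$ conditions on $y$. Hence the $y\in V_1$ killing all of them form a subspace of dimension at least $k_1+1-\sum_{i\ge2}k_i\ge 2$. Every $(y,x^{(2)},\dots,x^{(r)})$ with such $y$ is again a singular point of $H$. So every fibre of $Z\to X^\vee$ is positive-dimensional, and $\dim X^\vee\le N-3$.

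In Step 3 you only need one direction of the Hessian criterion; ``exactly when'' is false pointwise, since isolated critical points can have degenerate Hessian. Phrase it as follows. A nondegenerate Hessian makes $x$ an isolated critical point of $H|_X$, so $(x,H)$ is isolated in its fibre. Every component of every fibre of the dominant map $Z\to X^\vee$ has dimension at least $\dim Z-\dim X^\vee$, so this gives $\dim X^\vee=N-2$.

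Say also why the blocks $b^{ij}$ are free. They are the coordinates of $H$ with exactly two nonzero indices, while tangency at $x$ only involves coordinates with at most one nonzero index.

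In the block-matrix lemma there is no ``diagonal-vertex variant''. The diagonal is identically zero, so only fixed-point-free permutations moving every index to a different part contribute. For $T=\sum k_i$ odd the inequalities are automatically strict, $2\max_j k_j\le T-1$. This forces at least three nonzero parts. It also ensures that after deleting one index from each of the three largest parts, the remaining sizes still satisfy the even-case inequality, so your triangle-plus-matching exists.

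The resulting monomial arises from exactly two permutations: the matching transpositions together with the $3$-cycle or its inverse. These have equal signs, so the coefficient is $\pm 2$. That is fine over $\mathbb{C}$, where $\Det$ is defined, but it fails in characteristic $2$: a zero-diagonal symmetric matrix of odd size is then alternating, hence singular. So the argument must be run in characteristic $0$.

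More simply, you can bypass generic determinants with an explicit matrix. Take the block-diagonal matrix with blocks $\left(\begin{smallmatrix}0&1\\1&0\end{smallmatrix}\right)$ and, in the odd case, one block $\left(\begin{smallmatrix}0&1&1\\1&0&1\\1&1&0\end{smallmatrix}\right)$ of determinant $2$. Assign indices to parts as in your matching.
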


We now restrict our attention to hypermatrices of non-trivial format. Let us fix a non-trivial format $(k_1 + 1) \times (k_2 + 1) \times \cdots \times (k_r + 1)$.

\begin{prop}[{\cite[Prop.~1.4]{gkz}}]\label{prop:invariant}
The hyperdeterminant is relatively invariant under the group action \[G = \GL_{k_1 + 1}(\FF) \times \cdots \times \GL_{k_r + 1}(\FF),\] that is, there exist integers $n_1, n_2, \ldots, n_r$ such that if $M$ is a hypermatrix and $(A_1, \ldots, A_r) \in G$ then \[\Det \big((A_1, \ldots, A_r)\circ M\big) = \det(A_1)^{n_1}\cdots \det(A_r)^{n_r}\cdot \Det(M).\]
\end{prop}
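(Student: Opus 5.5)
The plan is to use the definition of $\Det$ from \cite{gkz}. Up to a scalar, $\Det$ is the irreducible polynomial defining the dual variety $X^\vee$ of the Segre embedding
\[X = \mathbb{P}^{k_1}\times\cdots\times\mathbb{P}^{k_r} \hookrightarrow \mathbb{P}\bigl(V_1\otimes\cdots\otimes V_r\bigr), \qquad V_i=\FF^{k_i+1}.\]
By the nontriviality theorem above, $X^\vee$ is a hypersurface in the dual space, so $\Det$ is well defined up to a nonzero constant. We work first over an algebraically closed field, or over $\ZZ$ viewed as a polynomial identity, and then specialize to arbitrary $\FF$.

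\textbf{Step 1: invariance of the hypersurface.} The group $G$ acts linearly on $V_1\otimes\cdots\otimes V_r$ and preserves the Segre variety $X$, since it maps decomposable tensors to decomposable tensors. Hence the induced action on the dual space preserves $X^\vee$. Concretely, a hypermatrix $M$ is degenerate exactly when its multilinear form $F_M$ has a nontrivial critical point $(x^{(1)},\dots,x^{(r)})$, meaning all partial derivatives of $F_M$ vanish there with every $x^{(i)}\neq 0$. We have
\[F_{g\circ M}(x^{(1)},\dots,x^{(r)}) = F_M\bigl(A_1^{\mathrm T}x^{(1)},\dots,A_r^{\mathrm T}x^{(r)}\bigr),\]
so critical points correspond bijectively under $g$. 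Therefore the zero set of $\Det(g\circ M)$, viewed as a polynomial in the entries of $M$, coincides with that of $\Det(M)$. Both polynomials have the same degree, and $\Det$ is irreducible, so
\[\Det(g\circ M)=\chi(g)\,\Det(M)\]
for some nonzero scalar $\chi(g)$.

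\textbf{Step 2: $\chi$ is a character.} From $\Det((gh)\circ M)=\chi(g)\chi(h)\Det(M)$ we see that $\chi$ is multiplicative, so $\chi\colon G\to\FF^\times$ is a homomorphism. It is also a regular function of $g$: evaluate at a fixed $M_0$ with $\Det(M_0)\neq 0$ and divide. Restricting to each factor gives a regular character $\GL_{k_i+1}\to\mathbb{G}_m$. Every such character has the form $\det^{n_i}$, because the commutator subgroup $\mathrm{SL}$ has no nontrivial characters and the quotient by it is $\mathbb{G}_m$, whose characters are $t\mapsto t^n$.

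\textbf{Step 3: the exponents.} Scalar matrices show what the exponents must be. Taking $A_i=t\cdot I$ multiplies $M$ by $t$, so $\Det$ scales by $t^{N}$, where $N=\deg\Det$. This gives $n_i(k_i+1)=N$, so $n_i=N/(k_i+1)$, an integer independent of $\FF$.

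\textbf{Step 4: arbitrary fields, including $\FF_q$.} The identity
\[\Det(g\circ M)=\prod_i\det(A_i)^{n_i}\,\Det(M)\]
is a polynomial identity with integer coefficients in the entries of $g$ and $M$, because $\Det$ is normalized to have integer coefficients. It holds over $\mathbb{C}$, hence over $\ZZ$, and hence over every field.

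The main obstacle is Step 1. It requires the irreducibility of $\Det$ and the identification of its zero set with the degenerate locus, both taken from \cite{gkz}; these are what let equality of zero sets upgrade to proportionality.
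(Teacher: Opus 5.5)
Your argument is correct, and it is essentially the standard proof behind the result the paper cites. The paper gives no proof of its own, only the reference \cite[Prop.~1.4]{gkz}; your route has the same ingredients: $G$-invariance of the degenerate locus, irreducibility of $\Det$ to upgrade equality of zero sets to proportionality, the fact that algebraic characters of $\GL_n$ are powers of $\det$, and exponents $n_i = N/(k_i+1)$ with $N = \deg \Det$ read off from scalar matrices. Your Step~4 is a useful addition: transferring the identity from $\mathbb{C}$ to arbitrary fields via the integer normalization of $\Det$ covers the paper's use of the proposition over $\FF_q$, which the paper does without comment.
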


The previous proposition leads to the following corollaries; part (d) in particular will be helpful when we classify hypermatrices with certain entries forced to be zero. 

\begin{cor}[{\cite[Cor. 1.5]{gkz}}]\label{cor:zero} \hfill
\begin{enumerate}[label = (\alph*)]
\item{Interchanging two parallel slices leaves the hyperdeterminant invariant up to sign.}
\item{The hyperdeterminant is a homogeneous polynomial in the entries of each slice. The degree of homogeneity is the same for parallel slices.}
\item{The hyperdeterminant does not change if we add to some slice a scalar multiple of a parallel slice.}
\item{The hyperdeterminant of a hypermatrix with two parallel slices proportional to each other is zero. In particular, the hyperdeterminant of a hypermatrix with a zero slice is zero.}
\end{enumerate}
\end{cor}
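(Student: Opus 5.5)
Each part of Corollary~\ref{cor:zero} is a direct specialization of Proposition~\ref{prop:invariant} to a well-chosen element of $G$ acting in a single direction. I would take $g = (1,\ldots,1,A,1,\ldots,1)$ with $A \in \GL_{k_i+1}(\FF)$ in the $i$th slot. The plan is to pick $A$ to realize each slice operation and then read off $\det(A)^{n_i}$.

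For (c), let $A = I + cE_{m,l}$ with $m \neq l$. By the definition of the action, this replaces slice $M_m$ by $M_m + cM_l$ and fixes all other slices. Since $\det A = 1$, Proposition~\ref{prop:invariant} gives $\Det(g\circ M) = \Det(M)$. For (a), let $A$ be the transposition matrix swapping $m$ and $l$. Then $\det A = -1$, so $\Det$ changes by the factor $(-1)^{n_i} = \pm 1$. (One could also derive (a) from (c) together with scalings, since a swap is a product of elementary operations and $\op{diag}(1,\dots,-1,\dots,1)$.)

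For (b), let $A = \op{diag}(1,\ldots,t,\ldots,1)$, with $t$ in position $m$, which scales slice $M_m$ by $t$. Proposition~\ref{prop:invariant} gives $\Det(\ldots,tM_m,\ldots) = t^{n_i}\Det(M)$ for all nonzero $t$. When $\FF$ is infinite, or working over $\ZZ[\text{entries}]$ and specializing, this identity forces $\Det$, viewed as a polynomial in the entries of $M_m$, to be homogeneous of degree $n_i$. Indeed, comparing the $t$-expansions of the homogeneous components shows that all components of other degrees vanish. The exponent $n_i$ depends only on the direction $i$ and not on $m$, so parallel slices have the same degree. This step is the only subtle one, since over a finite field the identity $t^{d} = t^{n_i}$ for all $t \in \FF_q^\times$ does not determine the degree. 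I would therefore argue at the level of the universal polynomial $\Det$ (with integer coefficients) over an infinite field such as $\ol{\FF}_q$ or $\mathbb{Q}$, and only afterwards specialize.

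For (d), suppose $M_l = cM_m$ with $l \neq m$. Applying (c) with coefficient $-c$ replaces $M_l$ by the zero slice without changing $\Det$, so it suffices to show that a hypermatrix with a zero slice has $\Det = 0$. By (b), $\Det$ is homogeneous of degree $n_i$ in the entries of that slice. If $n_i > 0$, substituting zeros gives $0$. To see $n_i > 0$, note that otherwise $\Det$ would not depend on the entries of slice $m$ at all. By symmetry of parallel slices (part (a)), it would then depend on no slice in direction $i$, hence be constant, contradicting the nontriviality of the format guaranteed by \cite[Thm.~1.3]{gkz}. Identifying $n_i$ with the degree in (b) is the key point, and I expect it to be the main obstacle.
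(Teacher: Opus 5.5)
Your proposal is correct and takes the route the paper indicates: the paper gives no proof of its own, citing \cite[Cor.~1.5]{gkz} and saying only that the corollary follows from the relative invariance in Proposition~\ref{prop:invariant}, and you make this explicit by applying that proposition to elementary, transposition, and diagonal matrices acting in a single direction. Your extra care fills in details the paper leaves implicit: in (b) you pass to an infinite field or the integer universal polynomial to identify the degree with $n_i$, and in (d) you use nontriviality of the format to force $n_i>0$.
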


For the rest of the paper we will only consider boundary format hypermatrices of three dimensions, that is, hypermatrices of format $(k_1 + 1) \times (k_1 + k_2 + 1) \times (k_2 + 1)$ for positive integers $k_1$ and $k_2$. We restrict our attention to these hypermatrices in order to make use of the next lemma, which, in conjunction with Corollary~\ref{cor:zero}(d), will allow us to classify which hypermatrices with restricted entries are nondegenerate.
\begin{lemma}[{\cite[Lemma 2.3]{aitken}}]\label{lem:faceSum}
Let $M$ be a hypermatrix of boundary format $(k_1 + 1) \times (k_1+k_2 + 1) \times (k_2 + 1) $ over a field $\FF$ and let $M_1, M_2, \ldots, M_{k_1 + 1}$ be the slices in the first direction. Then $M$ is nondegenerate if and only if every linear combination 
$$c_1M_1 + c_2M_2 + \cdots +c_{k_1+1}M_{k_1+1}$$ with $c_1, c_2, \ldots, c_{k_1+1} \in \ol{\FF}$ not all zero is full rank.
\end{lemma}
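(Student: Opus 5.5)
The plan is to work directly from the geometric characterization of the hyperdeterminant in \cite{gkz}, since the polynomial formula for $\Det$ is too unwieldy to use. Identify $M$ with the multilinear form
\[ F(x,y,z) = \sum_{i,j,l} a_{i,j,l}\, x_i y_j z_l, \qquad x \in \ol{\FF}^{k_1+1},\ y \in \ol{\FF}^{k_1+k_2+1},\ z \in \ol{\FF}^{k_2+1}. \]
In non-trivial formats, $\Det(M)$ is the defining equation of the projective dual of the Segre variety \cite[Ch.~14]{gkz}. Equivalently, $\Det(M)=0$ if and only if there are nonzero vectors $x,y,z$ over $\ol{\FF}$ at which all partial derivatives of $F$ vanish.

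Write $A(x) = \sum_i x_i M_i$, a $(k_1+k_2+1)\times(k_2+1)$ matrix. Vanishing of the three groups of partials then says:
\begin{enumerate}[label=(\roman*)]
\item $y\transpose M_i z = 0$ for every $i$ (the $x$-partials);
\item $A(x)\,z = 0$ (the $y$-partials);
\item $y\transpose A(x) = 0$ (the $z$-partials).
\end{enumerate}
So the lemma reduces to showing that such a nonzero triple $(x,y,z)$ exists if and only if some nonzero combination $A(c)$ fails to have full rank $k_2+1$.

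One direction is immediate. If a nonzero singular triple exists, then (ii) gives $A(x)z=0$ with $x \neq 0$ and $z \neq 0$. Hence $A(x)$ is a nonzero combination of the slices that is rank deficient.

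For the converse, suppose $c \neq 0$ and $A(c)$ has rank at most $k_2$. Set $x = c$ and choose $z \neq 0$ in the kernel of $A(c)$, so (ii) holds. It remains to find a nonzero $y$ satisfying (i) and (iii).
\begin{itemize}
\item Condition (iii) asks that $y$ lie in the left kernel of $A(c)$. Since $A(c)$ has $k_1+k_2+1$ rows and rank at most $k_2$, this left kernel has dimension at least $k_1+1$.
\item Condition (i) imposes the $k_1+1$ linear conditions $y\transpose(M_i z)=0$.
\item These conditions are not independent on the left kernel. The combination $\sum_i c_i\, y\transpose M_i z = y\transpose A(c) z$ vanishes automatically, because $A(c)z = 0$.
\end{itemize}
Since $c \neq 0$, some $c_{i_0} \neq 0$, so the condition for $i_0$ follows from the others. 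Thus at most $k_1$ independent linear conditions are imposed on a space of dimension at least $k_1+1$, and a nonzero $y$ exists. This dimension count is exactly where boundary format is used: the row count $k_1+k_2+1$ makes the numbers balance.

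The step I expect to require the most care is the first one: justifying that, for this boundary format, $\Det$ vanishes exactly at forms with a nonzero singular point. This means invoking the GKZ definition correctly. In particular, the dual variety must be a hypersurface, which is the non-triviality condition of the earlier theorem (met here since $k_1+k_2 = k_1+k_2$), and the singular point must be allowed to have coordinates in $\ol{\FF}$. That is why the lemma quantifies over coefficients $c_i \in \ol{\FF}$. Once this is settled, the remainder is the linear algebra above.
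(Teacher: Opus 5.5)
The paper gives no proof of this lemma. It is quoted from \cite{aitken} and used as a black box, with \cite{gkz} cited only for the definition of $\Det$. Your argument is a correct, self-contained derivation from the critical-point description of the degenerate locus.

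The ``only if'' direction is immediate from the $y$-partials. The count in the ``if'' direction is also right. The functionals $y \mapsto y\transpose M_i z$ satisfy $\sum_i c_i\, y\transpose M_i z = y\transpose A(c) z = 0$ identically in $y$, so for $c \neq 0$ they span at most a $k_1$-dimensional space. Meanwhile the left kernel of $A(c)$ has dimension at least $(k_1+k_2+1)-k_2 = k_1+1$. Compared with the bare citation, your route shows exactly where boundary format is used: the middle dimension is just large enough for this count to succeed, and, by the non-triviality criterion, just small enough for $\Det$ to exist.

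The step that needs more than you give it is the ground field. The equivalence ``$\Det(M)=0$ iff $F$ has a critical point with $x,y,z$ all nonzero'' is established in \cite{gkz} over $\mathbb{C}$, i.e., in characteristic $0$. The paper, however, applies the lemma over $\FF_q$. In characteristic $p$ it is not automatic that the reduction of the integral polynomial $\Det$ vanishes exactly on forms with such a critical point over $\ol{\FF}$. So as written your proof covers only characteristic $0$, unless you simply \emph{define} degeneracy in characteristic $p$ by the critical-point condition.

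For the format $2 \times (k+1) \times k$ that the paper actually uses, one way to close this is the linear map $u(s,t) \mapsto (sM_1+tM_2)\transpose u(s,t)$. It sends $(k+1)$-tuples of binary forms of degree $k-1$ to $k$-tuples of binary forms of degree $k$, and it has these properties:
\begin{enumerate}[label=(\roman*)]
\item Its matrix is square of size $k(k+1) = \deg \Det$, with integer entries linear in $M$.
\item Over any field it is singular exactly when the maximal minors of the pencil have a common zero on $\mathbb{P}^1(\ol{\FF})$.
\item It is invertible over every field at $M = E$.
\end{enumerate}
Combine these with your argument over $\mathbb{C}$ and the irreducibility of $\Det$. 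This forces its determinant to equal $\pm\Det$ as integer polynomials, which transports the lemma to every characteristic.
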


We now take a brief detour from talking about hypermatrices to formalize some concepts from rook theory that we will be trying to generalize. For positive integers $m$ and $n$ we the define the $m \times n$ rectangle $\mathcal{R}_{m \times n}$ to be the subset \[\{(i,j) : 1 \leq i \leq m \textnormal{ and } 1\leq j \leq n\} \subseteq \ZZ_{>0} \times \ZZ_{> 0}\] where we label rows of $\ZZ_{>0} \times \ZZ_{> 0}$ top to bottom and columns left to right.
For instance, $\mathcal{R}_{5 \times 4}$ with this labeling scheme looks like
\[ \ytableausetup
{mathmode, boxframe=normal, boxsize=2em}\begin{ytableau}
\scriptstyle(1,1) & \scriptstyle(1,2) & \scriptstyle(1,3)&\scriptstyle(1,4) \\
\scriptstyle(2,1) &\scriptstyle(2,2) & \scriptstyle(2,3)&\scriptstyle(2,4) \\
\scriptstyle(3,1) &\scriptstyle(3,2) & \scriptstyle(3,3)&\scriptstyle(3,4) \\
\scriptstyle(4,1) &\scriptstyle(4,2) & \scriptstyle(4,3)&\scriptstyle(4,4) \\
\scriptstyle(5,1) &\scriptstyle(5,2) & \scriptstyle(5,3)&\scriptstyle(5,4) 
\end{ytableau}.\]
The main idea from rook theory we are trying to generalize is about rook placements on \emph{integer partition-shaped boards}. We formalize these in a complementary manner by looking at rectangles with an integer partition-shaped section removed.

\begin{definition}\label{def:integerPartition}
Let $m$ and  $n$ be positive integers. An \defn{integer partition} $\lambda = \brac{\lambda_1, \lambda_2, \ldots, \lambda_n}$ of format $m \times n$ is a list of nonnegative integers $\lambda_i \leq m$ such that $\lambda_{i+1} \leq \lambda_{i}$ for $i = 1, 2, \ldots, n-1$. \end{definition}

For example, the integer partition $\lambda = \brac{4,2,1,0}$ is of format $4 \times 4$. We represent each partition by its Young diagram in transposed French notation; for example, the diagram of $\lambda$ is
$$\ydiagram{1,1,2,3}.$$ 
We will write $\lambda = \emptyset$ to mean that $\lambda$ is the empty partition 
$\lambda_1 = \lambda_2 = \cdots = \lambda_n = 0.$

For an integer partition $\lambda$ of format $m \times n$ we define the \defn{board} $B_{\lambda}$ as 
\[ B_{\lambda} = \{(i,j) : 1 \leq i \leq m - \lambda_j \textnormal{ and } 1\leq j \leq n \}\subseteq \mathcal{R}_{m\times n}.\] If $\lambda$ is clear from context we will often just write $B$.
Intuitively, $B_{\lambda}$ is the result of starting with the rectangle $\mathcal{R}_{m\times n}$, lining up the bottom left hand corner of $\lambda$ with $\mathcal{R}_{m \times n}$, and then eliminating all the squares belonging to their union.
 For example, let $\lambda= \brac{4,2,1,0}$ as above. Then we view the board $B_{\lambda}$ as the white squares in the picture below:
$$\begin{ytableau}
*(gray) & & & \\
*(gray) & & & \\
*(gray)&*(gray) & & \\
*(gray)&*(gray) & *(gray)& \\
\end{ytableau}.$$

\begin{definition}\label{def:rookPlacement}A \defn{rook placement} on $B$ is a subset $S$ of $B$ such that for any $(i_1,j_1), (i_2,j_2) \in S$ we have $i_1 \ne i_2$ and $j_1 \ne j_2$. 
\end{definition}
Intuitively we view a rook placement as a placement of non-attacking rooks in the squares of $B$ or equivalently a placement of rooks with at most one rook in each row and column. For instance, letting $B_{\lambda}$ be as above the rook placement $\{ (2,2), (1,3), (4,4)\}$ corresponds to
$$\begin{ytableau}
*(gray) & & \Rook& \\
*(gray) &\Rook& & \\
*(gray)&*(gray) & & \\
*(gray)&*(gray) & *(gray)&\Rook \\
\end{ytableau}$$
where \Rook represents a rook.

Now let us consider the set $R^{n\times n}$ of rook placements of cardinality $n$ on $\mathcal{R}_{n \times n}.$ Let $S \in R^{n\times n}$ be some rook placement and let $(i_j, j) \in S$ be the pair with second coordinate $j$. Define $f:R^{n\times n} \to \Symm_n$ by letting $f(S)$ be the permutation with $f(S)(j) = i_j$ for all $j \in [n]$.
If we then let $f\inv : \Symm_n \to R^{n \times n}$ be the function defined by $f\inv(\sigma) = \{(\sigma(j),j): j \in [n]\}$ then we can see that $f\inv$ is the well defined inverse of $f$, so $f$ is a bijection. Thus there is a natural bijective correspondence between rook placements on an $n \times n$ board and permutations of $\Symm_n$. For example, in the $4\times 4$ case the rook placement 
\[\begin{ytableau}
\empty& & \Rook& \\
& \Rook& & \\
& & &\Rook \\
\Rook& & & \\
\end{ytableau} \]
 corresponds to the permutation $4213$ (written as a word in one-line notation). 

\begin{definition}\label{def:PermutationMatrix}
Let $\sigma \in \Symm_n$ be a permutation. We define the $n \times n$ \defn{permutation matrix} $m_{\sigma}$ to be the matrix with $(m_{\sigma})_{i,j} = 1$ if $i = \sigma(j)$ and zero otherwise.
\end{definition}

With the preceding definition in mind it is clear that the bijection $f$ between rook placements and permutations extends to a bijection between rook placements and permutation matrices. 
For instance, the matrix
\[\begin{bmatrix}
0&0&1&0\\
0&1&0&0\\
0&0&0&1\\
1&0&0&0
\end{bmatrix}\] corresponds to the permutation $4213$ and previous rook placement.
One will notice for a given rook placement $S$ on $B_{\lambda}$ the following inequality holds: $f(S)(j) \leq n - \lambda_j$ and therefore $(m_{f(S)})_{i,j} = 0$ for all $i > n - \lambda_j$. The main result from classical $q$-rook theory we are trying to generalize comes from looking at all matrices with entries $(i,j)= 0$ for all $i > n - \lambda_j$ over a finite field $\FF_q$.

\begin{definition}\label{def:respects}
Let $\lambda$ be an integer partition of format $m \times n$ and $A$ an $m\times n$ matrix with entries in $\FF$. We say that $A$ \defn{respects} $\lambda$ if $a_{i,j} = 0$ for all $i > m-\lambda_j$ for $1 \leq j \leq n$.
\end{definition}

It immediately follows from the previous definition that if $\lambda$ and $\mu$ are integer partitions such that $\mu_i \leq \lambda_i$ for all $1 \leq i \leq n$ and a matrix $A$ respects $\lambda$ then $A$ respects $\mu$. Naturally, we will call $\mu$ a \defn{sub-partition} of $\lambda$ and write $\mu \subseteq \lambda$. 

The following result is an explicit product formula for the full-rank case of the expressions appearing in \eqref{eq:Haglund} (Haglund's theorem \cite[Thm.~1]{haglund}), which relates the number of rook placements on $B_{\lambda}$ to the number of invertible matrices over $\FF_q$ that respect $\lambda$.  Recall $[k]_q = 1 + q + \cdots + q^{k-1}$ for $k \geq 1$. 

\begin{thm}\label{thm:haglund}
    Let $\lambda$ be an integer partition with $\lambda_i \leq n-i$ for $i \in [n]$.
    The number of placements of $n$ rooks on $B_{\lambda}$ is $(n - \lambda_1)(n-1-\lambda_2) \cdots (1-\lambda_n)$ and the number of $n \times n$ invertible matrices over $\FF_q$ that respect $\lambda$ is 
    \[q^{\binom{n}{2}}(q-1)^n[n-\lambda_1]_q[n-1-\lambda_2]_q \cdots [1 - \lambda_n]_q.\]
\end{thm}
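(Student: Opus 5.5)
We prove both counts by filling the columns of the board in order $j = 1, 2, \ldots, n$. The constraint from $\lambda$ in column $j$ is that only rows $1, \ldots, n-\lambda_j$ are available. Since $\lambda$ is weakly decreasing, these allowed row sets are nested and increasing in $j$: column $j$ can use a superset of the rows available to every earlier column. This nesting drives both arguments.

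For the rook count, we place one rook in each column, in the order $j = 1, \ldots, n$. When we reach column $j$, the $j-1$ rooks already placed occupy $j-1$ distinct rows. Each of those rows is at most $n - \lambda_{j'} \le n - \lambda_j$, so all of them lie inside the allowed range for column $j$. Hence column $j$ has exactly $(n-\lambda_j) - (j-1) = n+1-j-\lambda_j$ choices. The hypothesis $\lambda_j \le n-j$ makes this number at least $1$, so the choices at each step are independent of the earlier choices, and the product is $(n-\lambda_1)(n-1-\lambda_2)\cdots(1-\lambda_n)$.

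For the matrix count, we choose the columns $v_1, \ldots, v_n$ of $A$ in the same order. Column $v_j$ must lie in the subspace $W_j = \Span{e_1, \ldots, e_{n-\lambda_j}}$, which has dimension $n-\lambda_j$. For $A$ to be invertible, $v_j$ must also avoid $\Span{v_1, \ldots, v_{j-1}}$. By nesting, $v_1, \ldots, v_{j-1}$ all lie in $W_{j'} \subseteq W_j$, and they are linearly independent, so their span is a $(j-1)$-dimensional subspace of $W_j$. The number of valid choices for $v_j$ is therefore $q^{n-\lambda_j} - q^{j-1} = q^{j-1}(q-1)[n+1-j-\lambda_j]_q$. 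This count is positive by the hypothesis and does not depend on the earlier choices.

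Multiplying over $j$ gives
\[ q^{\sum_{j}(j-1)}(q-1)^n \prod_{j=1}^n [n+1-j-\lambda_j]_q = q^{\binom{n}{2}}(q-1)^n[n-\lambda_1]_q \cdots [1-\lambda_n]_q, \]
as claimed. The only delicate point in both arguments is the containment of earlier choices in the current allowed region, which is exactly where the monotonicity of $\lambda$ is used. Everything else is routine bookkeeping.
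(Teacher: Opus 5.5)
Your proposal is correct and takes the same approach as the paper, which only remarks that both formulas follow by building the rook placement or matrix column by column from the left. You supply the details: the monotonicity of $\lambda$ nests the allowed row sets, so the earlier rooks (respectively the span of the earlier columns) lie inside the current allowed region, which gives the counts $n+1-j-\lambda_j$ and $q^{n-\lambda_j}-q^{j-1}$.
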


These formulas are easy to prove recursively, by building the rook placement or matrix column by column from the left. 
In order to extend this connection to hypermatrices, we need an analogous notion of an integer partition for three dimensions. 
 
\begin{definition}\label{def:plane partition}
A \defn{plane partition} $P$ of format $(k_1 + 1) \times (k_1+k_2 + 1) \times (k_2 + 1)$ is a collection $\brac{\lambda^{(1)}, \lambda^{(2)}, \ldots, \lambda^{(k_1 + 1)}}$ of integer partitions 
$\lambda^{(i)}$ of format $(k_1+k_2 + 1) \times (k_2 + 1)$ such that $\lambda^{(i+1)} \subseteq \lambda^{(i)}$ for all $1 \leq i \leq k_1$.
\end{definition} 

Analogous to the matrix case, we say a hypermatrix $M$ of format $(k_1 + 1) \times (k_1+k_2 + 1) \times (k_2+ 1)$ \defn{respects} a plane partition $P = \brac{\lambda^{(1)}, \lambda^{(2)}, \ldots, \lambda^{(k_1 + 1)}}$ if each slice $M_i$ in the first direction respects $\lambda^{(i)}$ for $1 \leq i \leq k_1 + 1$. For instance, if $\brac{\lambda^{(1)}, \lambda^{(2)}}$ is a plane partition of format $2 \times 5 \times 4$ with $\lambda^{(1)} = \brac{4,2,1,0}$ and $\lambda^{(2)} = \brac{3,2,1,0}$ then any hypermatrix of the form 
$$\begin{bmatrix}*&*&*&*\\0&*&*&*\\0&*&*&*\\0&0&*&*\\0&0&0&*\end{bmatrix}\begin{bmatrix}*&*&*&*\\ *&*&*&*\\0&*&*&*\\0&0&*&*\\0&0&0&*\end{bmatrix}$$
respects $\brac{\lambda^{(1)}, \lambda^{(2)}}$. For convenience, if there exists a nondegenerate hypermatrix $M$ that respects a plane partition $P$ we say $P$ \defn{allows} nondegenerate hypermatrices.
Similar to how we have sub-partitions for integer partitions there is an analogous notion for plane partitions.
\begin{definition}\label{def:sub-partition}
Let $P$ and $P'$ be plane partitions of format $(k_1 + 1) \times (k_1 + k_2 + 1) \times (k_2 +1)$ where $P = \brac{\lambda^{(1)}, \lambda^{(2)}, \ldots, \lambda^{(k_1 + 1)}}$ and $P' = \brac{\mu^{(1)}, \mu^{(2)}, \ldots, \mu^{(k_1 + 1)}}$. We write $P' \subseteq P$ and say $P'$ is a \defn{sub-partition} of $P$ if $\mu^{(i)} \subseteq \lambda^{(i)}$ for all $1 \leq i \leq k_1 + 1$.   
\end{definition}

The next lemma follows immediately from the definitions.
\begin{lemma}\label{lem:subPartition}
If $P$ and $P'$ are plane partitions with $P' \subseteq P$ and $P$ allows nondegenerate hypermatrices then $P'$ allows nondegenerate hypermatrices.
\end{lemma}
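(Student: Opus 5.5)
The argument is a direct unwinding of the definitions; no earlier result beyond the definitions of ``respects'' and ``sub-partition'' is needed.

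Suppose $P = \brac{\lambda^{(1)}, \ldots, \lambda^{(k_1+1)}}$ allows nondegenerate hypermatrices, and fix a nondegenerate $M$ respecting $P$. Let $M_1, \ldots, M_{k_1+1}$ be the slices of $M$ in the first direction. I will show that this same $M$ respects $P' = \brac{\mu^{(1)}, \ldots, \mu^{(k_1+1)}}$. Since nondegeneracy is a property of $M$ alone, this shows $P'$ allows nondegenerate hypermatrices.

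The key step is the slice-by-slice observation recorded after Definition~\ref{def:respects}: if a matrix respects a partition $\lambda$ and $\mu \subseteq \lambda$, then it respects $\mu$.

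To check this, suppose $A$ has size $m \times n$, respects $\lambda$, and $\mu_j \le \lambda_j$ for every $j$. Then $m - \mu_j \ge m - \lambda_j$. So any entry $(i,j)$ with $i > m - \mu_j$ also satisfies $i > m - \lambda_j$, and hence $a_{i,j} = 0$.

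Apply this to each slice in turn. By Definition~\ref{def:sub-partition}, $\mu^{(i)} \subseteq \lambda^{(i)}$ for each $i$. Since $M_i$ respects $\lambda^{(i)}$, it therefore respects $\mu^{(i)}$. This holds for every $1 \le i \le k_1+1$, so $M$ respects $P'$, and $P'$ allows nondegenerate hypermatrices.

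There is no real obstacle here. The only thing to be careful about is the direction of the inequality: a smaller partition forces fewer entries to be zero, so respecting it is a weaker condition.
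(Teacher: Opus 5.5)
Your proof is correct and is the same argument the paper has in mind: the paper states without proof that the lemma "follows immediately from the definitions." You have written out that unwinding, namely that the same nondegenerate $M$ respects $P'$ because each slice respecting $\lambda^{(i)}$ also respects $\mu^{(i)} \subseteq \lambda^{(i)}$.
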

Equivalently if $P'$ does not allow nondegenerate hypermatrices then $P$ does not allow nondegenerate hypermatrices.

\begin{lemma}
Let $P$ be a plane partition of format $(k_1 +1) \times (k_1 + k_2 + 1) \times (k_2 + 1)$ with $\lambda_{k_2+1}^{(k_1 + 1)} > 0$. Then $P$ does not allow nondegenerate hypermatrices.
\end{lemma}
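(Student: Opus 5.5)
If $\lambda^{(k_1+1)}_{k_2+1} > 0$, then because $\lambda^{(i+1)} \subseteq \lambda^{(i)}$ and each $\lambda^{(i)}$ is weakly decreasing, we get $\lambda^{(i)}_j \geq \lambda^{(k_1+1)}_{k_2+1} \geq 1$ for every $i$ and every $j$. So any hypermatrix $M$ respecting $P$ has, in every slice $M_i$ in the first direction and every column $j$, a zero in the last row $k_1+k_2+1$: the condition $a_{i,j}=0$ for $i > (k_1+k_2+1) - \lambda_j$ includes row $k_1+k_2+1$ since $\lambda_j \geq 1$. Hence the whole slice of $M$ in the second direction obtained by fixing the second index at $k_1+k_2+1$ is zero.

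Corollary~\ref{cor:zero}(d) then says that a hypermatrix with a zero slice has hyperdeterminant zero. So every $M$ respecting $P$ is degenerate, and $P$ does not allow nondegenerate hypermatrices.

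We could also argue through Lemma~\ref{lem:faceSum}. Every linear combination $c_1M_1+\cdots+c_{k_1+1}M_{k_1+1}$ is a $(k_1+k_2+1)\times(k_2+1)$ matrix with a zero last row. Full rank here means rank $k_2+1$, which a matrix with a zero row can still have, so this route does not directly settle the question. Instead, we could apply the lemma with the roles of directions swapped via a transpose of format. Using Corollary~\ref{cor:zero}(d) is cleaner, so that is the route we take.

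The only step requiring care is checking that the monotonicity conditions force the entire last-row slice to vanish, not just a single entry. This follows from the two nestings above: $\lambda_j \geq \lambda_{k_2+1}$ within each partition, and $\lambda^{(i)} \supseteq \lambda^{(k_1+1)}$ across slices.
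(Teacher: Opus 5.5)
Your proof is correct and is essentially the paper's argument: the two nestings force $\lambda^{(i)}_j \geq 1$ for all $i,j$, so the last slice in the second direction is zero, and Corollary~\ref{cor:zero}(d) gives degeneracy (the paper's preliminary reduction to $\lambda^{(k_1+1)}_{k_2+1} = 1$ via Lemma~\ref{lem:subPartition} is cosmetic). Your aside about swapping directions to use Lemma~\ref{lem:faceSum} would not actually help, since the zero slice stays in the second (long) direction, but you do not rely on it, so nothing is lost.
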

\begin{proof}
By Lemma~\ref{lem:subPartition} we may assume $\lambda_{k_2 +1}^{(k_1 + 1)} = 1$. By Definition~\ref{def:integerPartition}, $\lambda_{k_2 + 1}^{(k_1 + 1)} \leq \lambda_{i}^{(k_1 + 1)}$ for $1 \leq i \leq k_2 + 1$. Furthermore, $\lambda^{(k_1 + 1)} \subseteq \lambda^{(j)}$ for all $1 \leq j \leq k_1 + 1$ and thus 
\[ 1 = \lambda_{k_2 + 1}^{(k_1 + 1)} \leq \lambda_{i}^{(j)}\] for all $1 \leq i \leq k_2 + 1$ and $1 \leq j \leq k_1 + 1$.
Let $M = (a_{i_1,i_2,i_3})$ be a hypermatrix that respects $P$, so by definition $a_{i_1, i_2, i_3} = 0$ whenever $i_2 > k_1 + k_2 + 1 - \lambda_{i_3}^{(i_1)}$. Therefore $a_{i_1,(k_1 + k_2+1), i_3} = 0$ for all $1\leq i_1 \leq k_1 + 1$ and $1 \leq i_3 \leq k_2 + 1$. However, this is the same as saying the $(k_1 + k_2 + 1)$ slice in the $2$nd direction is zero and by Corollary~\ref{cor:zero}(d) we conclude that $M$ is degenerate.
\end{proof}
By Lemma~\ref{lem:faceSum} we get the following corollary.
\begin{cor}\label{cor:DegenerateSum}
Let $P$ be a plane partition of format $(k_1 + 1) \times (k_1 + k_2 + 1) \times (k_2 + 1)$ with $\lambda_{k_2+1}^{(k_1 + 1)} > 0$ and $M$ a hypermatrix that respects $P$ with slices $M_1, M_2, \ldots, M_{k_1 +1}$ in the first direction. There exists some $c_1, c_2, \ldots, c_{k_1 + 1} \in \ol{\FF}_q$, not all zero, such that 
$$c_1M_1 + c_2M_2 + \cdots + c_{k_1 + 1}M_{k_1 + 1}$$ has less than full rank.
\end{cor}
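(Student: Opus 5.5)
The plan is to combine the preceding lemma with Lemma~\ref{lem:faceSum}, since Corollary~\ref{cor:DegenerateSum} is essentially the contrapositive of the ``if'' direction of that lemma. Fix $P$ with $\lambda^{(k_1+1)}_{k_2+1} > 0$ and a hypermatrix $M$ respecting $P$, with first-direction slices $M_1, \ldots, M_{k_1+1}$.

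First, I will apply the preceding lemma to conclude that $P$ does not allow nondegenerate hypermatrices. Since $M$ respects $P$, this gives $\Det(M) = 0$.

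Next, I will invoke Lemma~\ref{lem:faceSum} for the field $\FF = \FF_q$. That lemma states that $M$ is nondegenerate if and only if every nontrivial combination $c_1 M_1 + \cdots + c_{k_1+1}M_{k_1+1}$ with $c_i \in \ol{\FF}_q$ has full rank. Since $M$ is degenerate, the right-hand condition fails. Hence some $c_1, \ldots, c_{k_1+1} \in \ol{\FF}_q$, not all zero, make the combination rank-deficient, which is exactly the claim.

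There is no real obstacle here. The only point to check is that Lemma~\ref{lem:faceSum} is stated over an arbitrary field with coefficients taken in its algebraic closure, so it applies verbatim to $\FF_q$ and $\ol{\FF}_q$.

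Alternatively, one can avoid the hyperdeterminant entirely with a direct argument. The proof of the preceding lemma shows that every slice $M_i$ has its last row $k_1+k_2+1$ equal to zero. Then any combination, for instance $c_1 = 1$ and all other $c_i = 0$, is a $(k_1+k_2+1)\times(k_2+1)$ matrix with a zero row. This does not by itself force rank below $k_2+1$, so this shortcut fails. The argument should therefore go through Lemma~\ref{lem:faceSum} as above.
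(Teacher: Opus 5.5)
Your proof is correct and is exactly the paper's argument. The preceding lemma shows that every $M$ respecting $P$ is degenerate, and the contrapositive of the ``if'' direction of Lemma~\ref{lem:faceSum} then gives the rank-deficient combination with coefficients in $\ol{\FF}_q$. You were also right to discard the single-slice shortcut: since $k_1 \geq 1$, a $(k_1+k_2+1)\times(k_2+1)$ matrix with one zero row can still have full column rank.
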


It appears now that we have defined an analogous version of the partition-shaped restrictions on matrices for hypermatrices. For this to be truly analogous in the $q$-rook counting sense there is one property we would like to ensure holds. 
Let us go back to classical $q$-rook theory for a moment and fix a format $n \times n$. By basic linear algebra the partition $\delta = \brac{n-1, n-2, \ldots, 1,0}$ is maximal such that there exists an invertible matrix which respects $\delta$. In the $4\times 4$ case all matrices of form 
\[\begin{bmatrix} *&*&*&*\\ 0&*&*&*\\0&0&*&*\\0&0&0&* \end{bmatrix}\] respect $\delta$, and in particular the standard $4 \times 4$ identity matrix respects $\delta$.  We now define the corresponding objects in three dimensions, and show that the analogous result holds for them.

\begin{definition}\label{E}
Fix a format $(k_1 +1) \times (k_1 + k_2 +1) \times (k_2 + 1)$. 
We denote by $\Delta_{k_1,k_2}$ the plane partition $\langle \lambda^{(1)}, \ldots, \lambda^{(k_1 + 1)}\rangle$ where $\lambda^{(i)}_j = (k_1 + k_2 + 1) - (i + j -1)$ for $1 \leq i \leq k_1 + 1$ and $1 \leq j \leq k_2 + 1$.  Further, we denote by $E$ the hypermatrix with entry $E_{i_1,i_2,i_3}$ equal to $1$ when $i_2 = i_1 + i_3 -1$ and $0$ otherwise.
\end{definition}
\begin{lemma}[{\cite[Lemma 3.4]{gkz}}]\label{lem:Enondegenerate}
For any format $(k_1 +1) \times (k_1 + k_2 +1) \times (k_2 + 1)$, the hypermatrix $E$ is nondegenerate. 
\end{lemma}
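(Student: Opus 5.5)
We use Lemma~\ref{lem:faceSum}: $E$ is nondegenerate exactly when every nontrivial linear combination of its first-direction slices $E_1,\dots,E_{k_1+1}$ has full rank. The argument rests on a shifted-diagonal structure plus a triangularity observation.

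First we describe the slices. The slice $E_{i_1}$ is the $(k_1+k_2+1)\times(k_2+1)$ matrix with $(i_2,i_3)$ entry $1$ when $i_2=i_1+i_3-1$ and $0$ otherwise. In words, its $j$th column is the standard basis vector $e_{i_1+j-1}$. Now take scalars $c_1,\dots,c_{k_1+1}\in\ol{\FF}$, not all zero, and set $N=\sum_{i}c_iE_i$. The $j$th column of $N$ is
\[
\sum_{i=1}^{k_1+1} c_i e_{i+j-1}.
\]
So column $j$ is the coefficient vector $(c_1,\dots,c_{k_1+1})$ shifted down by $j-1$ places. Equivalently, identify $\ol{\FF}^{k_1+k_2+1}$ with polynomials of degree at most $k_1+k_2$ via $e_m\leftrightarrow x^{m-1}$, and let $c(x)=\sum_i c_ix^{i-1}$. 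Then column $j$ of $N$ is $x^{j-1}c(x)$. This is a Sylvester-type (Toeplitz) matrix.

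Next we show $N$ has full column rank $k_2+1$. Let $r$ be the smallest index with $c_r\neq 0$; such an $r$ exists because the $c_i$ are not all zero. The lowest nonzero entry of column $j$ sits in row $r+j-1$, and these rows are distinct for distinct $j$. The $(k_2+1)\times(k_2+1)$ submatrix on rows $r,r+1,\dots,r+k_2$ is therefore lower triangular with every diagonal entry equal to $c_r\neq 0$. Its determinant is $c_r^{k_2+1}\neq 0$, so $N$ has rank $k_2+1$, which is full rank.

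The polynomial form gives the same conclusion directly. A relation $\sum_j d_jx^{j-1}c(x)=0$ means $d(x)c(x)=0$ in $\ol{\FF}[x]$, where $d(x)=\sum_j d_jx^{j-1}$. Since $c\neq 0$, this forces $d=0$.

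Finally, since every nontrivial combination of the slices has full rank, Lemma~\ref{lem:faceSum} gives that $E$ is nondegenerate. The only real care needed is the bookkeeping of indices. We must check that the row index $i+j-1$ stays within $1,\dots,k_1+k_2+1$, which holds because $i\le k_1+1$ and $j\le k_2+1$. We must also make sure we are applying Lemma~\ref{lem:faceSum} to slices in the first direction, matching the convention of the definition of $E$.
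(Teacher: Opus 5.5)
Your proof is correct. The paper gives no argument for this lemma; it simply imports it from \cite[Lemma~3.4]{gkz}. What you have written is therefore a self-contained proof that uses only what the paper has already stated.

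Via Lemma~\ref{lem:faceSum}, you reduce nondegeneracy to a rank claim. The paper imports that lemma from Aitken before this point, so there is no circularity. The claim is that for every nonzero $c\in\ol{\FF}^{k_1+1}$, the $(k_1+k_2+1)\times(k_2+1)$ matrix whose $j$th column is $c$ shifted down $j-1$ places has full column rank. Both of your justifications work:
\begin{itemize}
\item In the minor on rows $r,\dots,r+k_2$, the entry in position $(a,j)$ is $c_{r+a-j}$. This vanishes for $a<j$, either by minimality of $r$ or because the index is out of range. So the determinant is $c_r^{k_2+1}$.
\item The polynomial version is just the absence of zero divisors in $\ol{\FF}[x]$.
\end{itemize}
Your ``lowest'' nonzero entry should be read as the one with smallest row index, i.e.\ the topmost. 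The index check $r+k_2\le k_1+k_2+1$ is right.

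Your polynomial remark is the same computation one would use to prove the lemma directly from the definition of degeneracy. That definition asks for a critical point $(x,y,z)$, with each of $x,y,z$ nonzero, of the multilinear form $f=\sum E_{i_1i_2i_3}x_{i_1}y_{i_2}z_{i_3}$. The vector $\partial f/\partial y$ at $(x,z)$ is exactly the coefficient vector of $c(t)d(t)$, where $c,d$ are the polynomials attached to $x,z$. The two packagings have different merits:
\begin{itemize}
\item Routing through Lemma~\ref{lem:faceSum}, as you do, uses only tools the paper actually states; the paper never formally defines $\Det$.
\item The direct route extends to boundary formats of higher dimension by replacing $c\,d$ with a product $p_1\cdots p_r$. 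There, as the paper notes in Section~\ref{sec:final remarks}, no analogue of Lemma~\ref{lem:faceSum} is currently available.
\end{itemize}
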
 


\begin{thm}\label{thm:MaxPlanePartition}
Let $\Delta_{k_1,k_2}$ be as in Definition~\ref{E}.  Then $\Delta_{k_1,k_2}$ is the unique maximal plane partition with respect to the ordering in Definition~\ref{def:sub-partition} allowing nondegenerate hypermatrices.
\end{thm}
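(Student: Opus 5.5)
The proof has two parts. First, $\Delta_{k_1,k_2}$ allows nondegenerate hypermatrices. Second, every plane partition $P$ allowing nondegenerate hypermatrices satisfies $P \subseteq \Delta_{k_1,k_2}$. Together these give both maximality and uniqueness: any maximal allowing partition lies in $\Delta_{k_1,k_2}$, which itself allows, so it must equal $\Delta_{k_1,k_2}$.

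For the first part, check that $E$ respects $\Delta_{k_1,k_2}$ and invoke Lemma~\ref{lem:Enondegenerate}. The nonzero entries of $E$ are at $i_2 = i_1 + i_3 - 1$. Respecting $\lambda^{(i_1)}$ means entries vanish for $i_2 > (k_1+k_2+1) - \lambda^{(i_1)}_{i_3} = i_1 + i_3 - 1$. So every nonzero entry sits exactly on the boundary, and $E$ respects $\Delta_{k_1,k_2}$. I would also confirm that $\Delta_{k_1,k_2}$ is a valid plane partition: each $\lambda^{(i)}$ is weakly decreasing in $j$, nonnegative (the smallest value is at $i=k_1+1$, $j=k_2+1$, where it equals $0$), and $\lambda^{(i+1)} \subseteq \lambda^{(i)}$.

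For the second part, suppose $P = \langle \lambda^{(1)},\dots\rangle$ is not contained in $\Delta_{k_1,k_2}$. Then there exist $a,b$ with $\lambda^{(a)}_b \ge (k_1+k_2+1)-(a+b-1)+1 = k_1+k_2+3-a-b$. By Lemma~\ref{lem:subPartition} I may shrink $P$ to the plane partition generated by this single condition. That partition has $\lambda^{(i)}_j = k_1+k_2+3-a-b$ for all $i\le a$ and $j\le b$, and $0$ elsewhere. I then show it forces degeneracy. In a respecting hypermatrix $M$, the slices $M_1,\dots,M_a$ (first direction), restricted to columns $1,\dots,b$, are supported in rows $1,\dots,a+b-2$. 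I would use Lemma~\ref{lem:faceSum} and consider the linear combinations $c_1M_1+\cdots+c_aM_a$ with $c_{a+1}=\cdots=c_{k_1+1}=0$. Their first $b$ columns lie in a fixed $(a+b-2)$-dimensional coordinate space. Let $N(c)$ be the $(a+b-2)\times b$ matrix these columns form.

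The key claim is that some nonzero $c \in \ol{\FF}^a$ makes $\operatorname{rank} N(c) < b$. Then the full combination has linearly dependent columns and is not of full rank $k_2+1$, and Lemma~\ref{lem:faceSum} gives degeneracy.
\begin{itemize}
\item If $a+b-2 < b$, that is $a=1$, the claim is immediate.
\item In general, $N(c) = \sum_{i\le a} c_i N_i$ is an $a$-dimensional linear pencil of $(a+b-2)\times b$ matrices. The variety of $(a+b-2)\times b$ matrices of rank $<b$ has codimension $(a+b-2)-b+1 = a-1$. So its intersection with the projective $(a-1)$-dimensional family $\{[c]\}\cong\mathbb{P}^{a-1}$ is nonempty over the algebraically closed field $\ol{\FF}$, by projective dimension counting; the determinantal locus is a projective cone, so the intersection theorem applies.
\end{itemize}

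The main obstacle is making this dimension-count step rigorous, including the degenerate case where the map $c\mapsto N(c)$ is not injective (then a nonzero $c$ with $N(c)=0$ works trivially). A cleaner alternative I would try first is to recognize $N(c)$ as the generic situation covered by the fact that $a\times(a+b-2)\times b$ is not of nontrivial/boundary format for the hyperdeterminant (it violates $a+b-2 \ge (a-1)+(b-1)$ only marginally). One could then appeal to Aitken-type reasoning for that smaller format. Failing that, the projective intersection argument above suffices.
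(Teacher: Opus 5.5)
Your proof is correct, and its outer structure matches the paper's. The matrix $E$ shows that $\Delta_{k_1,k_2}$ allows nondegenerate hypermatrices. A violation at $(a,b)$ is shrunk by Lemma~\ref{lem:subPartition} to the box partition of height $k_1+k_2+3-a-b$ over slices $1,\dots,a$ and columns $1,\dots,b$. A rank-deficient combination of $M_1,\dots,M_a$ on those columns is then padded with zeros and fed to Lemma~\ref{lem:faceSum}.

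The difference is in how you obtain the key step: a nonzero $c$ with $\operatorname{rank} N(c)<b$.

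The paper uses no dimension theory here. It also keeps row $a+b-1$, so the restricted slices form a hypermatrix of boundary format $a\times(a+b-1)\times b$ whose last slice in the second direction is zero. That hypermatrix is degenerate by Corollary~\ref{cor:zero}(d), and the other direction of Lemma~\ref{lem:faceSum} yields $c$; this is Corollary~\ref{cor:DegenerateSum}. This is the correct version of your ``cleaner alternative.'' The relevant format is $a\times(a+b-1)\times b$, whereas $a\times(a+b-2)\times b$ is one row short of boundary format. Also, your parenthetical inequality $a+b-2\ge(a-1)+(b-1)$ actually holds with equality, so it is not violated.

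Your own argument is sound over $\ol{\FF}$. You intersect a $\mathbb{P}^{a-1}$ with the maximal-minor locus, which has codimension $(a+b-2)-b+1=a-1$. The non-injective case is handled separately, and it also covers $b=1$.

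In effect you reprove, for this special case, the half of Aitken's lemma that the paper applies to the auxiliary hypermatrix. As a result you need only the ``nondegenerate implies every combination has full rank'' direction of Lemma~\ref{lem:faceSum}. You also treat $a=1$ and $b=1$ explicitly; there the paper's auxiliary format has $k_1=0$ or $k_2=0$, which falls outside the stated setting. The cost is importing the codimension of the determinantal locus and the projective dimension theorem. The paper's route, by contrast, takes two lines given the hyperdeterminant machinery it has already assembled.
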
 
\begin{proof} By definition $\Delta_{k_1,k_2}$ allows $E$ which is nondegenerate by Lemma~\ref{lem:Enondegenerate}. Now we show if $Q \not\subseteq \Delta_{k_1, k_2}$ then $Q$ does not allow nondegenerate hypermatrices.

If $Q = \brac{ \mu^{(1)}, \ldots, \mu^{(k_1 + 1)}}$ is some plane partition of format $(k_1 + 1) \times (k_1 + k_2 + 1)\times (k_2 + 1)$ such that $Q \not\subseteq \Delta_{k_1,k_2}$, then there exists some $i,j > 0$ such that $\mu^{(i)}_j > (k_1 + k_2 + 1) - (i+j -1).$ Let $P = \brac{\lambda^{(1)}, \ldots, \lambda^{(i)}, \emptyset, \ldots, \emptyset}$ where $\lambda^{(1)} = \cdots = \lambda^{(i)}$ and 
$$\lambda^{(i)}_1 = \cdots = \lambda^{(i)}_j = (k_1 + k_2 + 1) - (i+j -1) + 1$$ and $\lambda^{(i)}_{j+1} = \cdots = \lambda^{(i)}_{k_2 + 1} = 0$, so $P\subseteq Q$. By Lemma~\ref{lem:subPartition} to prove $Q$ does not allow nondegenerate hypermatrices it suffices to show $P$ does not allow nondegenerate hypermatrices. Let $M = (a_{i_1,i_2,i_3})$ be a hypermatrix which respects $P$. Let $N$ be the sub-hypermatrix of format 
$$[(i-1) + 1] \times [(i-1) + (j-1) + 1] \times [(j-1) + 1]$$ that consists of the first $i+j-1$ rows and $j$ columns of the first $i$ slices of $M$ in the first direction. So, $N = (a_{i_1,i_2,i_3})$ where $1 \leq i_1 \leq i$, $1\leq i _2 \leq i + j -1$, and $1 \leq i_3 \leq j$. 
By the assumption that $M$ respects $P$, we know $(a_{i_1,i_2,i_3}) = 0$ for $1 \leq i_1 \leq i$, $1 \leq i_3 \leq j$, and 
$$ i_2 > (k_1 + k_2 + 1) - [(k_1 + k_2 + 1) - (i+j-1) + 1] = i+j -2.$$ Therefore $a_{i_1,(i+j-1),i_3} = 0$ for all $1 \leq i_1 \leq i$ and $1\leq i_3 \leq j$, or equivalently the $i + j -1$ slice of $N$ in the second direction is zero. By Corollary~\ref{cor:DegenerateSum} there exists 
$c_1, c_2, \ldots, c_i \in \ol{\FF}_q$ not all zero such that 
$$c_1N_1 + c_2N_2 + \cdots + c_iN_i$$ is less than full rank. Let us denote the result of this linear combination as the $(i+j-1) \times j$ matrix of column vectors $[v_1\mid v_2 \mid \cdots \mid v_j]$. Now we take linear combination
$$c_1M_1 + \cdots + c_iM_i + 0M_{i+1} + \cdots + 0M_{k_1 + 1}$$ of slices of $M$ in the first direction and let us denote this sum as the $(k_1 + k_2 + 1) \times (k_2 + 1)$ matrix of column vectors $[u_1\mid u_2 \mid \cdots \mid u_{k_2 + 1}]$. The first $j$ columns of this matrix are  
\[\begin{bmatrix}v_1\\ 0 \\ \vdots\\ 0\end{bmatrix},\begin{bmatrix}v_2\\ 0 \\ \vdots\\ 0\end{bmatrix}, \ldots, \begin{bmatrix}v_j\\ 0 \\ \vdots\\ 0\end{bmatrix}\]
which are linearly dependent. Thus the first $j$ columns of $[u_1\mid u_2 \mid \cdots \mid u_{k_2 + 1}]$ have rank less than $j$ and thus the matrix has rank less than $k_2 + 1$. It follows by Lemma~\ref{lem:faceSum} that $M$ is degenerate. Thus $P$ does not allow nondegenerate hypermatrices and the theorem follows.
\end{proof}

\section{Main Conjecture}
\label{sec:main conjecture}
In this section we are only concerned with hypermatrices of format $2 \times (k+1) \times k$. When it is clear from context we  will refer to the plane partition $\Delta_{1,(k-1)}$ as $\Delta$ and an arbitrary plane partition $\brac{\lambda^{(1)},\lambda^{(2)}}$ as $\brac{\lambda, \mu}$ where $\lambda$ and $\mu$ are integer partitions of format $(k+1) \times k$. Our main goal in this section is to introduce and study the following conjectural enumeration of nondegenerate hypermatrices of format $2 \times (k+1) \times k$ that avoid a particular plane partition $P \subseteq \Delta$.   

\begin{conj}\label{mainConjecture}
Fix a format $2\times (k+1) \times k$ and let $P = \brac{\lambda, \mu}$ be a plane partition such that $P \subseteq \Delta$. The number of nondegenerate  $2\times (k+1) \times k$ hypermatrices over $\FF_q$ which respect $P$ is 
\[q^{k^2}(q-1)^{2k}[k+1 - \lambda_1]_q[k - \lambda_2]_q \cdots [2 - \lambda_{k}]_q \cdot [k - \mu_1]_q[(k-1) - \mu_2]_q \cdots[1 - \mu_k]_q.\] 
\end{conj}

Although we will not be able to prove this conjecture for all $P$, there are large families of $P$ for which we will show it holds. 
But, before we get into hypermatrix counting we need to establish a few crucial definitions and a theorem from Aitken that will be the main tool in counting such objects.

\begin{definition}\label{G}
Fix a field $\FF$ and a format $2 \times (k+1) \times k$. Let $M_k$ be the set of nondegenerate hypermatrices over $\FF$ of format $2 \times (k+1) \times k$. Define the group 
\[G = \GL_{k+1}(\FF) \times \GL_{k}(\FF) / N\] where $N$ is the subgroup of $\GL_{k+1}(\FF) \times \GL_k(\FF)$ of ordered pairs $(c I_{k+1}, c\inv I_{k})$ for $c \in \FF^{\times}$.
\end{definition} 

We need to be careful about what we say next because this quotient group $G$ does not automatically inherit a group action on $M_k$ from $\GL_2(\FF) \times \GL_{k+1}(\FF) \times \GL_k(
\FF)$; however in this case we claim that it does. First, notice $\GL_{k+1}(\FF) \times \GL_k(\FF)$ inherits the group action of $\GL_2(\FF) \times \GL_{k+1}(\FF) \times \GL_k(\FF)$ by identifying $\GL_{k+1}(\FF) \times \GL_{k}(\FF)$ with the subgroup $1 \times \GL_{k+1}(\FF) \times \GL_k(\FF)$ where $1$ is the trivial subgroup of $\GL_2(\FF)$. Second, every element of $N$ commutes with every element of $\GL_{k+1}(\FF) \times \GL_k(\FF)$, so $N$ is in the center of $\GL_{k+1}(\FF) \times \GL_k(\FF)$. Therefore $N$ is normal and the group $G$ is well defined. Notice also that every element of $N$ acts trivially on the set $M_k$. Now choose an element $(g_1,g_2) N \in G$, for $(g_1,g_2) \in \GL_{k+1}(\FF) \times \GL_k(\FF)$, and a coset representative $(g_1,g_2) \cdot (cI_{k+1}, c\inv I_k)$. We define the action of $(g_1,g_2)N$ on $M_k$ by the action the representative $(g_1,g_2)\cdot (cI_{k+1}, c\inv I_k)$ inherits from $\GL_{k+1}(\FF) \times \GL_k(\FF)$. This is well defined since for any other coset representative and $M \in M_k$
\[ (g_1,g_2)\cdot (bI_{k+1}, b\inv I_k) \circ M = (g_1,g_2) \circ M = (g_1,g_2)\cdot (cI_{k+1}, c\inv I_k) \circ M \] since the elements of $N$ act trivially on $M_k$. Therefore the action of $\GL_{k+1}(\FF) \times \GL_k(\FF)$ on $M_k$ extends to a well defined action of $G$.

\begin{thm}[{\cite[Thm. 2.2]{aitken}}]\label{thm:aitken}
The action of $G$ on $M_k$ is a free, transitive action.
\end{thm}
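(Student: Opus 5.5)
The plan is to use Lemma~\ref{lem:faceSum} to turn nondegeneracy into a statement about the matrix pencil $sM_1+tM_2$, and then to show that every nondegenerate pencil is equivalent to the pencil of $E$ from Definition~\ref{E}. By Lemma~\ref{lem:faceSum}, $M=(M_1,M_2)$ is nondegenerate if and only if the $(k+1)\times k$ pencil $A(s,t)=sM_1+tM_2$ has rank $k$ for every $(s,t)\in\ol{\FF}^2\setminus\{0\}$. Let $w(s,t)=(f_1,\dots,f_{k+1})$ be the vector of signed maximal minors of $A$, with $f_i=(-1)^i$ times the minor obtained by deleting row $i$. These are binary forms of degree $k$. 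By Cramer's rule, $w(s,t)^{\mathrm T}A(s,t)=0$ identically. Nondegeneracy says exactly that the $f_i$ have no common zero in $\mathbb{P}^1(\ol{\FF})$.

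\textbf{Step 1 (the minors form a basis of the binary forms of degree $k$).} I will show that $f_1,\dots,f_{k+1}$ are linearly independent, so that they span the $(k+1)$-dimensional space $V_k$. Suppose some constant vector $a$ satisfies $\sum_i a_if_i=0$. This sum is the determinant of the square matrix $[a\mid A(s,t)]$, so $a$ lies in the column space of $A(s,t)$ for every $(s,t)$. Since $A$ has full column rank at every point, this gives a rational solution $x(s,t)$ of $A(s,t)x(s,t)=a$, which can be taken homogeneous of degree $-1$ after clearing denominators. A degree count then forces $a=0$: a nonzero polynomial solution of negative degree is impossible, and clearing denominators would introduce a common root of the minors. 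I expect this to be the most delicate point, and I will verify it carefully.

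\textbf{Step 2 (transitivity).} A row operation $g\in\GL_{k+1}(\FF)$ transforms the minor vector by $w\mapsto \det(g)\,g^{-\mathrm T}w$. A column operation $h\in\GL_k(\FF)$ multiplies $w$ by $\det h$. By Step 1, I can therefore choose $g$ so that the minor vector of $g\circ M$ is the monomial vector $(s^k,-s^{k-1}t,\dots,\pm t^k)$ up to a scalar, which is the minor vector of $E$. Now I compare $g\circ M$ with $E$ column by column. Each column of either pencil is a degree-$1$ syzygy of this monomial vector. The syzygy module of $(s^k,\dots,t^k)$ is freely generated by its $k$ linear syzygies $t\,e_i-s\,e_{i+1}$ (up to signs), by Hilbert--Burch or a direct check, and these are exactly the columns of the pencil of $E$. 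Hence each column of $g\circ M$ is a constant linear combination of the columns of $E$, so $g\circ M=h\circ E$ for some constant $k\times k$ matrix $h$. This $h$ is invertible because $g\circ M$ has full rank. So every nondegenerate $M$ lies in the orbit of $E$.

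\textbf{Step 3 (freeness).} It suffices to compute the stabilizer of $E$. Suppose $(g,h)\circ E=E$. Comparing minor vectors gives $\det(g)\det(h)\,g^{-\mathrm T}w_E=w_E$ as vectors of polynomials. Because the entries of $w_E$ are the distinct monomials, which form a basis of $V_k$, this identity forces $g^{-\mathrm T}$ to be a scalar matrix, so $g=cI_{k+1}$. Then $E\,h=c^{-1}E$, and since $E$ has full column rank, $h=c^{-1}I_k$. Thus the stabilizer of $E$ is exactly $N$, so $G$ acts freely. Together with Step 2, the action of $G$ on $M_k$ is free and transitive.
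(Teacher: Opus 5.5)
The paper gives no proof of this theorem; it imports it from Aitken. Your argument therefore works as a self-contained replacement, and it is correct.

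\textbf{Comparison with the paper.} Your route uses only Lemma~\ref{lem:faceSum} (itself quoted from Aitken) plus elementary facts about binary forms:
\begin{itemize}
\item the signed maximal minors transform by $\det(g)\,g^{-\mathrm{T}}$ and by $\det(h)$;
\item nondegeneracy forces the minors to be a basis of the binary forms of degree $k$;
\item the degree-one syzygies of the minor vector of $E$ are exactly the $\FF$-span of the columns of $sE_1+tE_2$.
\end{itemize}
For the last point you do not need Hilbert--Burch. Comparing coefficients in $\sum_i(\alpha_i s+\beta_i t)m_i=0$ shows that the space of linear syzygies has dimension exactly $k$, and the $k$ independent columns of $E$'s pencil lie in it.

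What this buys over the bare citation:
\begin{itemize}
\item it works verbatim over an arbitrary field;
\item it exhibits the stabilizer of $E$ as exactly $N$;
\item it yields a clean characterization: $M$ is nondegenerate if and only if its $k+1$ maximal minors form a basis of the binary forms of degree $k$. The converse holds because the span then contains $s^k$ and $t^k$.
\end{itemize}

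\textbf{Bookkeeping points.}
\begin{itemize}
\item With your sign convention the minor vector of $E$ is $f_i=(-1)^i s^{i-1}t^{k+1-i}$, so your ordering is reversed. This is harmless.
\item Transitivity on $M_k$ also uses $E\in M_k$. This follows from Lemma~\ref{lem:Enondegenerate}, or directly from the minors $\pm s^k,\pm t^k$.
\item Reducing freeness to the stabilizer of $E$ uses transitivity together with the centrality of $N$.
\end{itemize}

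\textbf{Step 1 needs to be written out properly.} As phrased (``homogeneous of degree $-1$ after clearing denominators'') it is muddled, though the idea is right. Note also that the pointwise statement ``$a$ lies in the column space for every $(s,t)$'' is not what you actually use. The clean version:
\begin{itemize}
\item Work over $K=\overline{\FF}(s,t)$. Some $f_i$ is a nonzero polynomial, so $A$ has rank $k$ over $K$. Since $\det[a\mid A]=0$, there is a unique $x\in K^k$ with $Ax=a$.
\item For every $i$ with $f_i\neq 0$, Cramer's rule on $A$ with row $i$ deleted gives $x=N_i/f_i$, where $N_i$ is a vector of forms of degree $k-1$.
\item Write $x=p/q$ in lowest terms. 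Then $q$ divides every $f_i$.
\item The $f_i$ have no common zero in $\mathbb{P}^1(\overline{\FF})$, hence no common linear factor over $\overline{\FF}$. So $q$ is constant.
\item Therefore $x$ is a polynomial vector that is homogeneous of degree $-1$. This forces $x=0$, and hence $a=Ax=0$.
\end{itemize}
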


It follows immediately that there is a bijection between $G$ and $M_k$ and we get the following corollary, which is also the case $\lambda = \mu = \emptyset$ of Conjecture~\ref{mainConjecture}.

\begin{cor}[{\cite[Prop.~3.1]{aitken}}]\label{cor:aitkenCorollary}
The number of nondegenerate $2 \times (k+1) \times k$ hypermatrices over $\FF_q$ is \[q^{k^2}(q-1)^{2k}[k+1]!_q[k]!_q.\]
\end{cor}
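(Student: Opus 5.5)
Since Theorem~\ref{thm:aitken} makes the action of $G$ on $M_k$ free and transitive, I will fix the nondegenerate hypermatrix $E$ of Lemma~\ref{lem:Enondegenerate}. The orbit map $g \mapsto g\circ E$ is then a bijection from $G$ onto $M_k$ (here $\FF=\FF_q$). Consequently $|M_k| = |G|$, and the whole proof reduces to computing the order of the quotient group $G = (\GL_{k+1}(\FF_q)\times \GL_k(\FF_q))/N$.

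First I compute the order of the product group. The standard column-by-column count gives $|\GL_n(\FF_q)| = \prod_{i=0}^{n-1}(q^n-q^i) = q^{\binom n2}(q-1)^n[n]!_q$; this is also the case $\lambda=\emptyset$ of Theorem~\ref{thm:haglund}. Hence
\[|\GL_{k+1}(\FF_q)|\,|\GL_k(\FF_q)| = q^{\binom{k+1}{2}+\binom{k}{2}}(q-1)^{2k+1}[k+1]!_q[k]!_q = q^{k^2}(q-1)^{2k+1}[k+1]!_q[k]!_q,\]
using $\binom{k+1}2+\binom k2 = k^2$.

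Next I determine $|N|$. The map $c\mapsto (cI_{k+1},c\inv I_k)$ is an injective homomorphism from $\FF_q^\times$, so $|N| = q-1$. Dividing by $|N|$ removes one factor of $q-1$ and gives $|G| = q^{k^2}(q-1)^{2k}[k+1]!_q[k]!_q$, which is the claimed count.

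There is no real obstacle here. The only point needing care is that the bijection requires both freeness and transitivity of the $G$-action together with nonemptiness of $M_k$. Transitivity and freeness are supplied by Theorem~\ref{thm:aitken}, and nonemptiness over $\FF_q$ holds because $E$ has entries in $\{0,1\}$ and is nondegenerate by Lemma~\ref{lem:Enondegenerate}. With these in place the computation is routine.
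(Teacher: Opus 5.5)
Your proposal is correct and is exactly the paper's argument: the free, transitive action of Theorem~\ref{thm:aitken} gives a bijection $G \to M_k$, and the count is $|\GL_{k+1}(\FF_q)|\cdot|\GL_k(\FF_q)|/|N|$ with $|N| = q-1$. Your arithmetic checks out, and your remark that $M_k$ is nonempty (since $E$ is nondegenerate) is a sensible extra detail that the paper leaves implicit.
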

The previous corollary follows by computing the size of $G$ but also leads to a general idea of how to count nondegenerate hypermatrices using the action of $G$. Specifically, if we want to count the nondegenerate hypermatrices that respect a plane partition $P\subseteq \Delta$ we first look at the set \[S = \{(g_1,g_2) \in \GL_{k+1}(\FF_q) \times \GL_k(\FF_q) : (g_1,g_2) \circ E \textnormal{ respects } P\},\] where $E$ is the hypermatrix in Definition~\ref{E}. Then the number of nondegenerate hypermatrices that respect $P$ is $|S|/|N| = |S|/(q-1)$. We will apply this idea numerous times when counting families of nondegenerate hypermatrices.

Now we will prove the following two technical lemmas which will allow us to extend the special case above to the much broader family of plane partitions with $\mu = \emptyset$ and $\lambda$ arbitrary.

\begin{lemma}\label{lem:orthogonal}

Let $\{v_1,v_2, \ldots, v_n\}$ be a linearly independent collection of vectors in $\FF_q^{k+1}$ for $1 \leq n \leq k+1$. For a vector $w \in \FF_q^{k+1}$ let $w \cdot v$ be the standard dot product. There are $q^{k+1-n}$ vectors $x \in \FF_q^{k+1}$ such that $x\cdot v_i = 0$ for $1 \leq i \leq n$.
\end{lemma}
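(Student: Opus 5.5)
The plan is to recast the count as the size of the kernel of a linear map and then apply rank--nullity. Form the $n \times (k+1)$ matrix $A$ whose $i$th row is $v_i\transpose$. A vector $x \in \FF_q^{k+1}$ satisfies $x \cdot v_i = 0$ for all $1 \leq i \leq n$ exactly when $Ax = 0$. So the set we want to count is $\ker(A)$, viewed as the kernel of the linear map $\FF_q^{k+1} \to \FF_q^{n}$, $x \mapsto Ax$. In particular this set is a subspace of $\FF_q^{k+1}$, not just a subset.

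Next I would find the dimension of this kernel. The rows of $A$ are the linearly independent vectors $v_1, \ldots, v_n$, so the row rank of $A$ is $n$. Since row rank equals column rank, the image of $x \mapsto Ax$ has dimension $n$. Rank--nullity then gives $\dim \ker(A) = (k+1) - n$; the hypothesis $n \leq k+1$ is what keeps this nonnegative.

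Finally, an $\FF_q$-vector space of dimension $d$ is isomorphic to $\FF_q^{d}$, so it has exactly $q^{d}$ elements. With $d = k+1-n$ this gives the stated count $q^{k+1-n}$. The edge case $n = k+1$ is consistent: $A$ is then invertible, the kernel is $\{0\}$, and $q^0 = 1$.

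There is no real obstacle here. The only step needing care is the identification of the row rank of $A$ with $n$ (and hence with the column rank). If one wants to avoid invoking row rank $=$ column rank, an alternative is to extend $\{v_1, \ldots, v_n\}$ to a basis of $\FF_q^{k+1}$ and count via the nondegenerate dot-product pairing. I would use the rank--nullity route as the main argument.
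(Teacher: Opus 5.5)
Your proposal is correct and follows essentially the same route as the paper: identify the solution set as the kernel of the matrix with rows $v_i\transpose$, note the image has dimension $n$ because the row rank is $n$, and apply rank--nullity to get $q^{k+1-n}$ elements. The only difference is cosmetic. The paper pads the matrix with zero rows to make it $(k+1)\times(k+1)$, so the map is $\FF_q^{k+1}\to\FF_q^{k+1}$, whereas you use the $n\times(k+1)$ matrix directly.
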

\begin{proof}
For a vector $v \in \FF_q^{k+1}$ define $v\transpose$ to be the transpose of $v$. We define the linear transformation $T_A: \FF_q^{k+1} \to \FF_q^{k+1}$ by the matrix of row vectors
\[ A = \begin{bmatrix} \text{---} \hspace{-0.2cm} & v_1\transpose & \hspace{-0.2cm} \text{---} \\ 
 & \vdots & \\
\text{---} \hspace{-0.2cm} & v_n\transpose & \hspace{-0.2cm} \text{---} \\ \text{---} \hspace{-0.2cm} & 0 & \hspace{-0.2cm} \text{---} \\
&\vdots& \\
\text{---} \hspace{-0.2cm} & 0 & \hspace{-0.2cm} \text{---}
\end{bmatrix}\]  where $0$ represents the zero vector.
For a vector $x \in \FF_q^{k+1}$ one can see
\[T_A(x) = Ax =  \begin{bmatrix} x\cdot v_1 & x\cdot v_2 & \cdots & x\cdot v_n & 0 & \cdots & 0 \end{bmatrix}\transpose\] 
and notice that $\ker(T_A)$ is the set of vectors we are trying to count. By the rank-nullity theorem $\dim \ker(T_A) = k+1 - \dim \im(T_A)$ and we claim that $\dim\im(T_A)= n$. To see this recall $\dim\im(T_A)$ is equal to the dimension of the row space of $A$. Clearly the dimension of the row space is $n$, so $\dim \im(T_A) = n$ and $\dim\ker(T_A) = k+1 - n$. Therefore $|\ker(T_A)| = q^{k+1- n}$ as claimed.
\end{proof} 
\begin{lemma}\label{lem:partitionTransposition}
Let $\lambda$ be an integer partition of format $(k+1) \times k$ with $\lambda_i \leq k+1 - i$ and $\lambda\transpose = \brac{\lambda_1\transpose, \lambda_2\transpose, \ldots, \lambda_{k+1}\transpose}$ be the transpose of $\lambda$. Then
\[ [k+1 - \lambda_1]_q[k - \lambda_2]_q \cdots [2 - \lambda_{k}]_q = [k+1 - \lambda_1\transpose]_q[k - \lambda_2\transpose]_q \cdots [1 - \lambda_{k+1}\transpose]_q.\]
\end{lemma}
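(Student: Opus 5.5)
The identity is a statement about multisets of $q$-integers. It would therefore suffice to show that the two lists $(k+1-i-\lambda_i)_{i=1}^{k}$ and $(k+2-j-\lambda\transpose_j)_{j=1}^{k+1}$ agree as multisets after discarding entries equal to $1$. Since $[1]_q=1$, such extra factors do not affect the product, and the hypothesis $\lambda_i\le k+1-i$ guarantees that every factor on the left is $[m]_q$ with $m\ge 1$. The cleanest route, however, avoids comparing multisets directly and instead realizes both sides as the same count. The key step uses Theorem~\ref{thm:haglund}. We view $\lambda$ as a partition of format $(k+1)\times(k+1)$ by appending $\lambda_{k+1}=0$. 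The hypothesis $\lambda_i\le k+1-i$ for $i\le k$, together with $\lambda_{k+1}=0$, is then exactly the condition of that theorem with $n=k+1$. Hence the number of invertible $(k+1)\times(k+1)$ matrices respecting $\lambda$ is
\[q^{\binom{k+1}{2}}(q-1)^{k+1}[k+1-\lambda_1]_q\cdots[2-\lambda_k]_q[1-\lambda_{k+1}]_q,\]
and the last factor is $[1]_q=1$. This is $q^{\binom{k+1}{2}}(q-1)^{k+1}$ times the left-hand side.

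Next I transpose. A matrix $A$ respects $\lambda$ exactly when $a_{i,j}=0$ for $i>k+1-\lambda_j$, i.e.\ its zero set is the Young diagram of $\lambda$ placed in the bottom-left corner. Transposition is a bijection on invertible matrices. The transposed zero set is $\{(j,i): i>k+1-\lambda_j\}$, which occupies the top-right region. To return to the bottom-left corner, I conjugate by the antidiagonal permutation matrix $w_0$, i.e.\ replace $A$ by $w_0A\transpose w_0$; this still preserves invertibility. I then check that the zero set of $w_0A\transpose w_0$ is $\{(i',j'): i'>k+1-\lambda\transpose_{j'}\}$, the bottom-left placement of the conjugate partition $\lambda\transpose$. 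This is a routine index computation using $\lambda\transpose_{j}=\#\{i:\lambda_i\ge j\}$.

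Finally I verify that $\lambda\transpose$ also satisfies the hypothesis of Theorem~\ref{thm:haglund}, namely $\lambda\transpose_j\le k+1-j$. This holds because the staircase condition $\lambda_i\le k+1-i$ is self-conjugate: the diagram lies inside the staircase $\langle k,k-1,\dots,1,0\rangle$, whose conjugate is itself. Applying the theorem to $\lambda\transpose$ then gives the same count with factors $[k+1-\lambda\transpose_1]_q\cdots[1-\lambda\transpose_{k+1}]_q$. Cancelling the common prefactor $q^{\binom{k+1}{2}}(q-1)^{k+1}$ yields the claim.

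The main obstacle is the index bookkeeping in the transpose-and-reverse step. I have to confirm that the conjugated zero pattern is exactly that of $\lambda\transpose$ in the paper's transposed-French convention, and that the staircase bound transfers to $\lambda\transpose$. If this step became messy, the fallback would be the direct multiset comparison: the nonunit values $k+1-i-\lambda_i$ and $k+2-j-\lambda\transpose_j$ are the hook-type contents along the boundary of $\lambda$ inside the staircase, and they could be matched by walking the boundary path.
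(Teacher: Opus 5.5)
Your proposal is correct and follows essentially the same route as the paper: count invertible $(k+1)\times(k+1)$ matrices respecting $\lambda$ via Theorem~\ref{thm:haglund}, then use the bijection $A \mapsto w_0 A\transpose w_0$ onto matrices respecting $\lambda\transpose$ and count again. You also check explicitly two things the paper leaves implicit, namely that the zero pattern transfers correctly and that the staircase bound is self-conjugate. One step is missing: the counting argument gives equality only at prime powers $q$, so you should note, as the paper does, that two polynomials agreeing at infinitely many values are equal before concluding the identity. Also, the left-hand list in your opening remark should read $(k+2-i-\lambda_i)_{i=1}^{k}$.
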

\begin{proof}
To prove this we count nondegenerate $(k+1) \times (k+1)$ matrices that respect $\lambda$ in two different ways. Fix a prime power $q$.  By Theorem~\ref{thm:haglund} there are 
\[q^{\binom{k+1}{2}}(q-1)^{k+1}[k+1 - \lambda_1]_q[k - \lambda_2]_q \cdots [2 - \lambda_{k}]_q\] such matrices over $\FF_q$, and we have dropped the factor $[1 - \lambda_{k+1}]_q$ since by assumption $\lambda_{k+1} = 0$.
Next, we count the same objects except first we conjugate by the anti-diagonal matrix of all ones, and then take the transpose. It is easy to see this map is a bijection between $(k+1) \times (k+1)$ invertible matrices that respect $\lambda$ and those that respect $\lambda^T$. Thus, by Theorem~\ref{thm:haglund} there are
\[ q^{\binom{k+1}{2}}(q-1)^{k+1}[k+1 - \lambda_{1}\transpose ]_q[k - \lambda_{2}\transpose]_q\cdots[1 - \lambda_{k+1}\transpose]_q\]
such matrices. 
We have counted the same object two different ways so the displayed equations are equal.  Since they are equal for each prime power $q$, they are equal as polynomials, and the statement follows after canceling powers of $q$ and $q-1$.
\end{proof}

\subsection{The case \texorpdfstring{$\mu = \emptyset$}{mu is empty}}

Our next result shows that Conjecture~\ref{mainConjecture} holds for all $P \subseteq \Delta$ with $\mu = \emptyset.$
\begin{thm}\label{thm:muEmpty}
Let $P \subseteq \Delta$ with $\mu = \emptyset$. The number of nondegenerate hypermatrices which respect $P$ is 
\[q^{k^2}(q-1)^{2k}[k+1 - \lambda_1]_q[k - \lambda_2]_q \cdots [2 - \lambda_k]_q \cdot [k]!_q.\]
\end{thm}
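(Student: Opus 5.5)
The plan is to use Aitken's free and transitive action (Theorem~\ref{thm:aitken}) in the way sketched after Corollary~\ref{cor:aitkenCorollary}. We count the set $S$ of pairs $(g_1,g_2)\in\GL_{k+1}(\FF_q)\times\GL_k(\FF_q)$ with $(g_1,g_2)\circ E$ respecting $P=\brac{\lambda,\emptyset}$, and then divide by $|N|=q-1$.

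\textbf{Step 1: rewrite the action.} In format $2\times(k+1)\times k$, the front face $E_1$ of $E$ is $\begin{bmatrix} I_k\\ 0\end{bmatrix}$ and the back face $E_2$ is $\begin{bmatrix} 0\\ I_k\end{bmatrix}$. Simultaneous row operations by $g_1$ and column operations by $g_2$ send $E_i\mapsto g_1E_ih$, where $h\in\GL_k(\FF_q)$ is $g_2^{\mathrm T}$ (or whatever fixed bijective image of $g_2$ the convention gives). Write $g_1=[c_1\mid\cdots\mid c_{k+1}]$ and $C=[c_1\mid\cdots\mid c_k]$. Then the front face is $A_1=Ch$, and the back face is $[c_2\mid\cdots\mid c_{k+1}]h$. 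Since $\mu=\emptyset$, the back face imposes no condition. So $(g_1,g_2)\in S$ exactly when $A_1=Ch$ respects $\lambda$.

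\textbf{Step 2: reparametrize $S$.} Map $(g_1,h)\mapsto(h,A_1,c_{k+1})$. For each fixed $h\in\GL_k(\FF_q)$ this is a bijection onto triples in which:
\begin{itemize}
\item $A_1$ is a rank-$k$ $(k+1)\times k$ matrix respecting $\lambda$;
\item $c_{k+1}$ is a vector not in the column space of $A_1$, which equals the column space of $C$.
\end{itemize}
The inverse is $C=A_1h^{-1}$, and invertibility of $g_1$ is exactly the two conditions above. Hence
\[
|S| \;=\; |\GL_k(\FF_q)|\cdot\#\{A_1\}\cdot(q^{k+1}-q^k).
\]
Here $|\GL_k(\FF_q)|=q^{\binom k2}(q-1)^k[k]!_q$.

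\textbf{Step 3: count $A_1$ column by column.} This is the step needing care, namely checking a nesting condition. Column $j$ of $A_1$ must lie in the coordinate subspace $V_j$ of vectors vanishing in the bottom $\lambda_j$ entries, so $\dim V_j=k+1-\lambda_j$. Because $\lambda$ is weakly decreasing, $V_1\subseteq V_2\subseteq\cdots$. Thus the previous $j-1$ independent columns already lie in $V_j$. The number of choices for column $j$ is therefore
\[
q^{k+1-\lambda_j}-q^{j-1}=q^{j-1}(q-1)[k+2-j-\lambda_j]_q.
\]
This count is valid and positive because $P\subseteq\Delta$ gives $\lambda_j\le k+1-j$. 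The product over $j$ is $q^{\binom k2}(q-1)^k\prod_{j=1}^k[k+2-j-\lambda_j]_q$. This is the same recursive argument as in Theorem~\ref{thm:haglund}.

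\textbf{Step 4: assemble.} Dividing $|S|$ by $q-1$ gives
\[
q^{2\binom k2+k}(q-1)^{2k}\,[k+1-\lambda_1]_q\cdots[2-\lambda_k]_q\,[k]!_q .
\]
Since $2\binom k2+k=k^2$, this is the claimed formula.
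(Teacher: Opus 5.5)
Your proof is correct, and it takes a different route from the paper. The paper fixes $g_2$ and builds $g_1$ row by row from the bottom. Row $v_i$ must be orthogonal to the first $\lambda^{\mathrm{T}}_{k+2-i}$ columns of the front face of $(1,g_2)\circ E$, so Lemma~\ref{lem:orthogonal} gives the product $\prod_{i=0}^{k}(q^{k+1-\lambda^{\mathrm{T}}_{i+1}}-q^{i})$. This is indexed by the \emph{transposed} partition, so the paper then needs Lemma~\ref{lem:partitionTransposition}, itself proved by double counting via Theorem~\ref{thm:haglund}, to rewrite it in terms of $\lambda$.

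You work with columns of $g_1$ instead. Because the front face is $Ch$ with $C$ the first $k$ columns of $g_1$, the substitution $A_1=Ch$ removes $h$ entirely. The last column $c_{k+1}$ then contributes a free factor $q^k(q-1)$. The column-by-column count of full-column-rank matrices respecting $\lambda$ gives the $\lambda$-form directly, so neither auxiliary lemma is needed. Your argument also makes explicit something the paper does not state: every full-column-rank $(k+1)\times k$ front face occurs in exactly $q^k|\GL_k(\FF_q)|$ nondegenerate hypermatrices. So the $\mu=\emptyset$ case is just the rectangular Haglund count times a constant.

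The cost is that this decoupling is special to $\mu=\emptyset$. Once the back face $[c_2\mid\cdots\mid c_{k+1}]h$ is also constrained, it shares columns with $C$ in shifted positions. The paper's row viewpoint, in which each row of $g_1$ acts on both faces at once, is the one it reuses in the $k=2$ computations for $P_1$ and $P_2$.
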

\begin{proof}

By Theorem~\ref{thm:aitken} every nondegenerate hypermatrix can be written as $(g_1,g_2) \circ E$ for $(g_1, g_2) \in \GL_{k+1}(\FF_q) \times \GL_k(\FF_q)$. We count the number of pairs $(g_1,g_2)$ such that $(g_1,g_2) \circ E$ respects $P$. In particular, we begin by fixing a $g_2 \in \GL_{k}(\FF_q)$ and then compute the number of $g_1$ such that $(g_1,g_2)\circ E$ respects $P$.

Fix a format $2 \times (k+1)\times k$ and let $P \subseteq \Delta$ be a plane partition with $\mu = \emptyset$. 
For some $g_2 \in \GL_k(\FF_q)$ let $M$ be the hypermatrix $M = (1,g_2) \circ E$. We denote the entries of $M$ as $M_{i_1,i_2,i_3}$ where $i_1 \in [2]$, $i_2 \in [k+1]$, and $i_3 \in [k]$.
Let $g_1 \in \GL_{k+1}(\FF_q)$ such that $(g_1,1) \circ M$
respects $P$ and notice $(g_1,1) \circ M = (g_1,g_2) \circ E$. We will denote $g_1$ by the matrix of row vectors \[g_1 = \begin{bmatrix} \text{---} \hspace{-0.2cm} & v_1 & \hspace{-0.2cm} \text{---} \\ 
 & \vdots & \\
\text{---} \hspace{-0.2cm} & v_{k+1} & \hspace{-0.2cm} \text{---} \\
\end{bmatrix}\] and by assumption $g_1 \in \GL_{k+1}(\FF_q)$ the collection $\{v_1, v_2, \ldots, v_{k+1}\}$ is a linearly independent set of vectors.
Let us represent the hypermatrix $M$ as the pair of slices in the first direction $(M_1, M_2)$ and we will refer to each as a matrix of column vectors 
\[ M_1 = \begin{bmatrix}     \vertbar & \vertbar &        & \vertbar \\
    a_{1,1}    & a_{1,2}    & \ldots & a_{1,k}    \\
    \vertbar & \vertbar &        & \vertbar \end{bmatrix}, \; M_2 = \begin{bmatrix}     \vertbar & \vertbar &        & \vertbar \\
    a_{2,1}    & a_{2,2}    & \ldots & a_{2,k}    \\
    \vertbar & \vertbar &        & \vertbar \end{bmatrix}.\]
By Theorem~\ref{thm:aitken} the hypermatrix $M$ is nondegenerate, so by Lemma~\ref{lem:faceSum} the collections of vectors $\{a_{1,1}, a_{1,2}, \ldots, a_{1,k}\}$ and $\{a_{2,1}, a_{2,2}, \ldots, a_{2,k}\}$ are both linearly independent.
By the definition of the action, entry $(i_1,i_2,i_3)$ of $(g_1,1)\circ M$ is equivalent to $v_{i_2} \cdot a_{i_1, i_3}$. Then by the assumption $(g_1,1)\circ M$ respects $P$ we have $v_{i_2} \cdot a_{1,i_3} = 0$ for all $i_2 > k+1 - \lambda_{i_3}$ with $i_3 \in [k]$ or equivalently
$v_{i_2} \cdot a_{1,i_3} = 0$  for $1 \leq i_3 \leq \lambda_{k+2 - i_2}\transpose$ with $i_2 \in [k+1]$. However, since $\mu = \emptyset$ these are the only conditions that the collection of vectors $\{v_1, v_2, \ldots, v_{k+1}\}$ must satisfy in order for $(g_1,1) \circ M$ to respect $P$. 
Now we count how many choices of vectors $v_1, v_2, \ldots, v_{k+1}$ we have thereby counting the number of $g_1$.

For $i \in [k+1]$ define $S_{i} = \{ x \in \FF_q^{k+1} \mid x\cdot a_{1,j} = 0 \textnormal{ for all } 1 \leq j \leq \lambda_{k+2-i}\transpose\}$, that is $S_{i}$ is the set of all vectors $x \in \FF_{q}^{k+1}$ such that the dot product of $x$ with the first $\lambda_{k+2-i}\transpose$ column vectors of $M_1$ is zero. Notice that $S_{i}$ has the property that if $\{x_1, x_2, \ldots, x_n\} \subseteq S_{i}$ then $\Span{x_1, x_2, \ldots, x_n} \subseteq S_{i}$. By the fact $\{a_{1,1,}, a_{1,2}, \ldots, a_{1,k}\}$ is a linearly independent set of vectors we can use Lemma~\ref{lem:orthogonal} and calculate $|S_{i}| = q^{k+1 - \lambda_{k+2-i}\transpose}$.
Notice $\lambda_{k+2 - i}\transpose \leq \lambda_{k+2 - (i+1)}\transpose$, so $S_{k+1} \subseteq S_{k} \subseteq \cdots \subseteq S_{1}$. 
By assumption $(g_1,1) \circ M$ respects $P$ we have $v_{k+1} \in S_{k+1}$ and since $v_{k+1} \ne 0$ there are $q^{k+1 - \lambda_{1}\transpose} -1$ choices for $v_{k+1}$. 
Generally, for $1 \leq i \leq k$ we know $v_{k+1 - i} \in S_{k+1 - i}$ and $\Span{v_{k+1}, v_{k}, \ldots, v_{k+1 - (i-1)}} \subseteq S_{k+1 - i}$. By the linear independence of $\{v_{k+1}, \ldots, v_{k+1 - i}\}$ we have $v_{k+1 - i} \in S_{k+1 - i} \setminus \Span{v_{k+1}, v_{k}, \ldots, v_{k+1 - (i-1)}}$ so there are $q^{k+1 - \lambda_{i+1}\transpose} - q^i$ choices for $v_{k+1 - i}$. 
Therefore the number of choices of $v_{k+1}, v_k, \ldots, v_1$ is
\[(q^{k+1 - \lambda_1\transpose} - 1)(q^{k+1 - \lambda_2\transpose} -q)\cdots (q^{k+1 - \lambda_{k+1}\transpose} - q^k)\] which is also the number of choices $g_1$ such that $(g_1,1)\circ M$ respects $P$. This factors as \[q^{\binom{k+1}{2}}(q+1)^{k+1}[k+1 - \lambda_1\transpose]_q[k - \lambda_2\transpose]_q \cdots [2 - \lambda_{k}\transpose]_q[1 - \lambda_{k+1}\transpose]_q\] and by Lemma~\ref{lem:partitionTransposition} this is equal to 
\[q^{\binom{k+1}{2}}(q+1)^{k+1}[k+1 - \lambda_1]_q[k - \lambda_2]_q \cdots [2 - \lambda_{k}]_q.\]
Since we assumed $g_2 \in \GL_k(\FF_q)$ was arbitrary there are 
\[q^{k^2}(q-1)^{2k+1} [k+1 - \lambda_1]_q[k - \lambda_2]_q\cdots [2 - \lambda_k]_q \cdot [k]!_q\] pairs $(g_1,g_2) \in \GL_{k+1}(\FF_q) \times \GL_k(\FF_q)$ such that $(g_1,g_2) \cdot E$ respects $P$. 
Similar to the counting in Corollary~\ref{cor:aitkenCorollary} we conclude there are \[q^{k^2}(q-1)^{2k} [k+1 - \lambda_1]_q[k - \lambda_2]_q\cdots [2 - \lambda_k]_q \cdot [k]!_q\] nondegenerate hypermatrices that respect $P$.
\end{proof}

\subsection{The case \texorpdfstring{$P = \Delta$}{P equals Delta}}

We give one more family of plane partitions for which the conjecture holds.

\begin{thm}
The number of nondegenerate hypermatrices respecting $\Delta$ is 
\[q^{k^2}(q-1)^{2k}.\]
\end{thm}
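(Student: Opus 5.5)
The plan is to bypass the group action and use Lemma~\ref{lem:faceSum} directly. For $k_1=1$, $k_2=k-1$ we have $\Delta=\brac{\lambda,\mu}$ with $\lambda_j=k+1-j$ and $\mu_j=k-j$. So a hypermatrix $(M_1,M_2)$ respects $\Delta$ exactly when two conditions hold. First, the front face $M_1$ has $(M_1)_{i,j}=0$ for $i>j$: its top $k\times k$ block is upper triangular and its row $k+1$ is zero. Second, the back face $M_2$ has $(M_2)_{i,j}=0$ for $i>j+1$: it is ``upper Hessenberg'' in the $(k+1)\times k$ sense.

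The free positions number $\binom{k+1}{2}$ in $M_1$ and $\binom{k+1}{2}+k$ in $M_2$, for a total of $k^2+2k$. The claim I would prove is:
\begin{quote}
a hypermatrix respecting $\Delta$ is nondegenerate if and only if all the ``corner'' entries $(M_1)_{j,j}$ and $(M_2)_{j+1,j}$, for $1\le j\le k$, are nonzero.
\end{quote}
This claim gives the count $q^{k^2+2k-2k}(q-1)^{2k}=q^{k^2}(q-1)^{2k}$ immediately.

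For sufficiency, I would apply Lemma~\ref{lem:faceSum} to $N=c_1M_1+c_2M_2$ with $(c_1,c_2)\in\ol{\FF}_q^2\setminus\{0\}$, splitting into two cases.
\begin{itemize}
\item If $c_2\neq 0$, delete the first row of $N$. The remaining $k\times k$ matrix (rows $2,\dots,k+1$) is upper triangular, because both faces vanish below the relevant diagonal. Its diagonal entries are $c_1(M_1)_{j+1,j}+c_2(M_2)_{j+1,j}=c_2(M_2)_{j+1,j}\neq0$, so $N$ has rank $k$.
\item If $c_2=0$, then $N=c_1M_1$ and its top $k\times k$ block is upper triangular with nonzero diagonal.
\end{itemize}
In both cases $N$ has full rank, so $M$ is nondegenerate.

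For necessity, there are again two cases.
\begin{itemize}
\item If some $(M_1)_{j,j}=0$, take $(c_1,c_2)=(1,0)$. The matrix $M_1$ has a zero last row and a singular upper triangular top block, so it is not of full rank.
\item If some $(M_2)_{j+1,j}=0$, mimic the proof of Theorem~\ref{thm:MaxPlanePartition}. In $N=c_1M_1+c_2M_2$, the first $j$ columns are supported in rows $1,\dots,j+1$. Row $j+1$ meets these columns only at position $(j+1,j)$, and that entry is $c_1\cdot 0+c_2\cdot 0=0$. Hence the first $j$ columns live in rows $1,\dots,j$ and form a $j\times j$ block $A(c_1,c_2)$. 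Its determinant is a homogeneous polynomial of degree $j\ge1$ in $(c_1,c_2)$, so it has a nontrivial zero over $\ol{\FF}_q$. At that zero the first $j$ columns of $N$ are dependent, and $M$ is degenerate by Lemma~\ref{lem:faceSum}.
\end{itemize}

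The main point requiring care is this second necessity case: one must see that the vanishing corner forces a square block. A direct rank argument on $M_2$ alone fails, since $M_2$ could still have rank $k$ through its first row. The homogeneous-polynomial trick over $\ol{\FF}_q$ is what handles it.

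As a consistency check, the formula agrees with Conjecture~\ref{mainConjecture}, since every factor $[k+2-i-\lambda_i]_q$ and $[k+1-i-\mu_i]_q$ equals $[1]_q=1$. It also agrees with the fact that $E$ respects $\Delta$ with all corner entries equal to $1$.
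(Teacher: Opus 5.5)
Your proof is correct, and it takes a genuinely different route for the main step.

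\textbf{The paper's route.} The paper also introduces your set $S_2$ of hypermatrices respecting $\Delta$ with all corners $(M_1)_{j,j}$, $(M_2)_{j+1,j}$ nonzero. It then sandwiches the set $S_1$ of nondegenerate hypermatrices respecting $\Delta$ as
\[
(U_{k+1},L_k)\circ E \subseteq S_1 \subseteq S_2 .
\]
\begin{itemize}
\item The first inclusion comes from Proposition~\ref{prop:invariant} and the nondegeneracy of $E$.
\item The second is read off from Theorem~\ref{thm:MaxPlanePartition}: a zero corner means $M$ respects a plane partition not contained in $\Delta$.
\item Equality is forced by cardinality. Both $|S_2|$ and $|(U_{k+1},L_k)\circ E| = |U_{k+1}|\,|L_k|/(q-1)$ (the latter using freeness in Theorem~\ref{thm:aitken}) equal $q^{k^2}(q-1)^{2k}$.
\end{itemize}

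\textbf{Necessity half.} Your necessity half is essentially the paper's $S_1\subseteq S_2$, with the argument of Theorem~\ref{thm:MaxPlanePartition} inlined. You replace the appeal to Corollary~\ref{cor:DegenerateSum} on a $2\times(j+1)\times j$ sub-hypermatrix by the elementary fact that a binary form of positive degree has a nontrivial zero over $\ol{\FF}_q$.

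\textbf{Sufficiency half.} Here you replace the counting sandwich with a direct check via Lemma~\ref{lem:faceSum}. When $c_2\neq 0$, deleting the first row leaves an upper-triangular block with diagonal $c_2(M_2)_{j+1,j}$. When $c_2=0$, the top block of $c_1M_1$ is upper triangular with nonzero diagonal.

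\textbf{What each route buys.} Your argument uses only Lemma~\ref{lem:faceSum}. It avoids Aitken's free transitive action and the relative invariance of $\Det$. It also proves the characterization ``nondegenerate iff all corners are nonzero'' over an arbitrary field, where no cardinality argument is available. The paper's route never has to verify by hand that nonzero corners force nondegeneracy. In exchange it yields the structural fact $S_1=(U_{k+1},L_k)\circ E$: these hypermatrices form a single orbit of the triangular groups. This is exactly the $(\sigma,\pi)=(\mathrm{id},\mathrm{id})$ cell of the Bruhat-type decomposition in Section~\ref{sec:Bruhat}.
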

\begin{proof}
Let $L_{k}, U_{k + 1}$ respectively denote the sets of lower-triangular $k \times k$ and upper-triangular $(k + 1) \times (k + 1)$ invertible matrices, let $S_1$ denote the set of nondegenerate $2 \times (k + 1) \times k$ hypermatrices that respect $\Delta$, and let $S_2$ denote the set of $2 \times (k + 1) \times k$ hypermatrices $M$ for which $M_{i, j, k} \neq 0$ if $j = i + k - 1$ and for which $M_{i, j, k} = 0$ if $j > i + k - 1$.  We claim that
\[
(U_{k + 1}, L_k) \circ E = S_1 = S_2.
\]
We proceed by proving two containments and then showing the largest and smallest of the three sets have the same size.

It is easy to see from the definition of the action of matrices on hypermatrices that if $A$ is upper-triangular and $B$ is lower-triangular then $(A, B) \circ E$ respects $\Delta$.  Since $E$ is nondegenerate, it follows immediately from Proposition~\ref{prop:invariant} that $(U_{k + 1}, L_k) \circ E \subseteq S_1$.  By  Theorem~\ref{thm:MaxPlanePartition}, if a hypermatrix $M$ respects $\Delta$ and $M_{1,i_3,i_3} = 0$ or $M_{2,i_3+1,i_3} = 0$ for $i_3 \in [k]$ then $M$ is degenerate; consequently $S_1 \subseteq S_2$.  Finally, by directly counting the number of choices for each entry we observe that $|S_2| = q^{\binom{k}{2} + \binom{k + 1}{2}}(q - 1)^{2k} = q^{k^2}(q-1)^{2k}$, whereas since both $U_{k + 1}$ and $L_k$ contain the scalar matrices, we have by Theorem~\ref{thm:aitken} that $|(U_{k + 1}, L_k) \circ E| = \frac{1}{q - 1}|U_{k + 1}| \cdot |L_k| = q^{k^2}(q-1)^{2k}$, as well.  Thus in fact all three sets are equal, and have the claimed cardinality.
\end{proof}

\subsection{The case \texorpdfstring{$k = 2$}{k = 2}}

Although we do not have a general theorem to count the number of nondegenerate hypermatrices for an arbitrary plane partition with $\mu \ne \emptyset$ many of the methods we have developed so far can be applied to compute concrete cases. To illustrate the use of these techniques we will compute the number of nondegenerate hypermatrices in the $2 \times 3 \times 2$ case with $P \ne \Delta$ and $\mu \ne \emptyset$. In this case there are three plane partitions which satisfy these properties:
\begin{enumerate}
\item{ $P_1$ with $\lambda = \brac{1,0}$ and $\mu = \brac{1,0}$,}
\item{$P_2$ with $\lambda = \brac{2,0}$ and $\mu = \brac{1,0}$,}
\item{ $P_3$ with $\lambda = \brac{1,1}$ and $\mu = \brac{1,0}$.}
\end{enumerate}  Although this case is small the general strategies used to compute the number of nondegenerate hypermatrices that respect $P_i$ for $i = 1,2,3$ can be used in larger cases, especially the method used for $i = 3$. Also, the number of plane partitions with $P \ne \Delta$ and $\mu \ne \emptyset $ grows rapidly, for example in the $2 \times 4 \times 3$ case there are $40$ such $P$ and we would rather not put the reader through all of that. 
 
Let us start with $P_1$ and look at the hypermatrix $(1,g_2) \circ E$ for some $g_2 \in \GL_2(
\FF_q)$. Let 
$g_2 = \begin{bmatrix} a&b\\c&d\end{bmatrix}$ and recall there are $q(q-1)^2[2]_q$ such $g_2$. By definition of the action
\[(1,g_2) \cdot E = \begin{bmatrix} a&c\\b&d\\0&0\end{bmatrix}\begin{bmatrix} 0&0\\a&c\\b&d\end{bmatrix}\] and recall by Proposition~\ref{prop:invariant} this hypermatrix is nondegenerate.
Therefore by Lemma~\ref{lem:faceSum} the vectors $\begin{bmatrix}a\\b\\0\end{bmatrix}$ and $\begin{bmatrix}0\\a\\b\end{bmatrix}$ are linearly independent. 
Now let us take some $g_1 \in \GL_3(\FF_q)$ such that $(g_1, g_2) \circ E$ respects $P_1$.
We will think of $g_1$ as the matrix of row vectors 
$\begin{bmatrix} \text{---} \hspace{-0.2cm} & r_1 & \hspace{-0.2cm} \text{---} \\ \text{---} \hspace{-0.2cm} & r_2 & \hspace{-0.2cm} \text{---} \\ \text{---} \hspace{-0.2cm} & r_3 & \hspace{-0.2cm} \text{---} \end{bmatrix}$.
By assumption $(g_1, g_2) \circ E$ respects $P$ we have $r_3 \cdot \begin{bmatrix}a\\b\\0\end{bmatrix} = r_3 \cdot \begin{bmatrix}0\\a\\b\end{bmatrix} = 0$ and by Lemma~\ref{lem:orthogonal} there are $q-1$ choices for $r_3$. The second row, $r_2$, has no restrictions other than it must be linearly independent from $r_3$, so there are $q^3 - q$ choices for $r_2$. Similarly, $r_1$ has no restrictions other than being linearly independent from $r_2$ and $r_3$ so there are $q^3 - q^2$ choices of $r_1$. Thus the number of $(g_1,g_2)$ such that $(g_1, g_2) \cdot E$ respects $P_1$ is 
\[(q-1)(q^3 -q)(q^3 - q^2)\cdot q(q-1)^2[2]_q = q^4(q-1)^5[2]_q^2.\] Therefore the number of nondegenerate hypermatrices which respect $P_1$ is $q^4(q-1)^4[2]_q^2$.

We play the same game on $P_2$, choosing any $g_2 \in \GL_2(\FF_q)$ which we represent with the same matrix as before. 
Again we let $g_1 \in \GL_3(\FF_q)$ such that $(g_1, g_2)\circ E$ respects $P_2$ and we represent $g_1$ as the matrix of row vectors above. As before we must have $r_3 \cdot \begin{bmatrix}a\\b\\0\end{bmatrix} = 0$ and $r_3 \cdot \begin{bmatrix}0\\a\\b\end{bmatrix} = 0$ so there are $q-1$ choices for $r_3$.
Now we must have $r_2 \cdot \begin{bmatrix}a\\b\\0\end{bmatrix} = 0$ and by assumption $g_1$ is invertible $r_2 \notin \Span{r_3}$. However every scalar multiple of $r_3$ is also a choice for $r_2$ since \[(cr_3) \cdot \begin{bmatrix}a\\b\\0\end{bmatrix} = c\left(r _3\cdot\begin{bmatrix}a\\b\\0\end{bmatrix}\right) = 0\] and so there are $q^2 - q$ choices for $r_2$. 
Now we need to choose $r_1$ and the only restriction we have comes from the assumption $g_1$ is invertible so $r_1 \notin \Span{r_3,r_2}$ and there are $q^3 - q^2$ choices for $r_1$. Thus the number of $(g_1,g_2) \in \GL_3(\FF_q) \times \GL_2(\FF_q)$ such that $(g_1,g_2) \circ E$ respects $P_2$ is 
\[(q-1)(q^2-q)(q^3-q^2)q(q-1)^2[2]_q = q^4(q-1)^5[2]_q\] and the number of nondegenerate hypermatrices that respect $P_2$ is $q^4(q-1)^4[2]_q$.

In the previous two cases, we were able to get by with the same techniques as in the case $\mu = \emptyset$ of Theorem~\ref{thm:muEmpty}. For $P_3$ something genuinely new happens; that is for every choice of $g_2 \in \GL_2(\FF_q)$ it is not true that there are the same number of $g_1\in \GL_3(\FF_q)$ such that $(g_1,g_2) \circ E$ respects $P_3$. Instead of assuming $g_2 \in \GL_2(\FF_q)$ is arbitrary we will consider the following two cases: $g_2 = \begin{bmatrix} a&0\\c&d\end{bmatrix}$ and $g_2 = \begin{bmatrix} a&x\\c&d\end{bmatrix}$ where $x \in \FF_q^{\times}$. In the first case 
\[(1,g_2) \circ E = \begin{bmatrix} a&c\\ 0&d\\0&0\end{bmatrix}\begin{bmatrix} 0&0
\\a&c\\ 0&d\end{bmatrix} \] and now let us count the $g_1 \in \GL_3(\FF_q)$ such that $(g_1,g_2) \circ E$ respects $P_3$. Recall the diagonal of an invertible upper triangular matrix is nonzero and since $g_2$ is invertible, $a,d \ne 0$ and $c$ is free so there are $q(q-1)^2$ such $g_2$.
Let the third row of $g_1$ be $\begin{bmatrix} w&y&z\end{bmatrix}$. We must have $\begin{bmatrix} w&y&z\end{bmatrix} \cdot \begin{bmatrix} a\\0\\0 \end{bmatrix} = 0$ so $w = 0$ and similarly $\begin{bmatrix} w&y&z\end{bmatrix} \cdot \begin{bmatrix} 0\\a\\0 \end{bmatrix} = 0$ so $y = 0$. 
By assumption $g_1$ is invertible we know $z \ne 0$ so $z \in \FF_q^{\times}$. For any such $z$,
\[ \begin{bmatrix}0&0&z\end{bmatrix} \cdot\begin{bmatrix} a\\0\\0 \end{bmatrix} = \begin{bmatrix}0&0&z\end{bmatrix} \cdot\begin{bmatrix} 0\\a\\0\end{bmatrix} = \begin{bmatrix}0&0&z\end{bmatrix} \cdot\begin{bmatrix} c\\d\\0 \end{bmatrix} = 0\] and thus there are $q-1$ choices for $\begin{bmatrix} w&y&z\end{bmatrix}$. Since those are all the restrictions on $g_1$ the number of such matrices is $(q-1)(q^3-q)(q^3-q^2)$.

Let us see what happens when we let $g_2 = \begin{bmatrix} a&x\\c&d\end{bmatrix}$ with $x \in \FF_q^\times$. By definition of the action we have 
\[(1,g_2) \cdot E = \begin{bmatrix} a&c\\x&d\\0&0\end{bmatrix}\begin{bmatrix} 0&0
\\a&c\\x&d\end{bmatrix} \] and let us assume there exists some $g_1 \in \GL_3$ such that $(g_1,g_2) \circ E$ respects $P_3$.
Let us represent $g_1$ as the matrix of row vectors $\begin{bmatrix} \text{---} \hspace{-0.2cm} & r_1 & \hspace{-0.2cm} \text{---} \\ \text{---} \hspace{-0.2cm} & r_2 & \hspace{-0.2cm} \text{---} \\ \text{---} \hspace{-0.2cm} & r_3 & \hspace{-0.2cm} \text{---} \end{bmatrix}$. 
By the assumption that $(g_1,g_2)\circ E$ respects $P_3$ we have $r_3 \cdot \begin{bmatrix} a\\x\\0 \end{bmatrix} = r_3 \cdot \begin{bmatrix} c\\d\\0 \end{bmatrix} =0$. Since $\begin{bmatrix} a\\x\\0 \end{bmatrix}$ and $\begin{bmatrix} c\\d\\0 \end{bmatrix}$ are linearly independent by Lemma~\ref{lem:orthogonal} there are $q-1$ possible choices for $r_3$. We can see these are the vectors $\begin{bmatrix}0&0&z\end{bmatrix}$ with $z\in \FF^{\times}_q$.
However, by assumption $(g_1,g_2)\circ E$ respects $P_3$ we also have 
\[r_3 \cdot \begin{bmatrix}0\\a\\x\end{bmatrix} = zx = 0\] which is a contradiction since we assumed both $z,x \in \FF_q^{\times}.$ Thus for any choice of $g_2 = \begin{bmatrix}a&x\\c&d\end{bmatrix}$ with $x \in \FF_q^{\times}$ there are no $(g_1,g_2) \in \GL_3(\FF_q) \times \GL_2(\FF_q)$ such that $(g_1,g_2) \circ E$ respects $P_3$. Thus the number of elements $(g_1,g_2)$ such that $(g_1,g_2) \circ E$ respects $P_3$ is 
\[(q-1)(q^3-q)(q^3-q^2)q(q-1)^2 = q^4(q-1)^5[2]_q\] and the number of nondegenerate hypermatrices that respect $P_3$ is $q^4(q-1)^4[2]_q$.

\section{Hyperrook placements and a weak form of the main conjecture}\label{sec:4}

We now switch gears and look at the ``moral" evidence supporting the conjecture. We begin by defining a natural generalization of rook placements that we call \emph{hyperrook placements}.  The main result of this section is the following theorem, which verifies several nontrivial predictions of Conjecture~\ref{mainConjecture}, namely, that the number of nondegenerate $2 \times (k + 1) \times k$ hypermatrices over $\FF_q$ with support on a plane partition is a polynomial function of $q$ that has (up to a power of $q - 1$) nonnegative integer coefficients, and furthermore provides a combinatorial interpretation for the value of this polynomial at $q = 1$.

\begin{restatable}{thm}{weaktheorem}
\label{thm:weak form of conjecture}
    Let $P \subseteq \Delta$.  There is a polynomial $f$ with nonnegative integer coefficients such that
    \[
    f(1) = (k+1 - \lambda_1)(k - \lambda_2) \cdots (2 - \lambda_k)\cdot(k-\mu_1)((k-1) - \mu_2) \cdots (1 - \mu_k)
    \]
    is the number of hyperrook placements that respect $P$ and, for every prime power $q$, the number of nondegenerate hypermatrices that respect $P$ is
    \[
    q^{k^2} (q - 1)^{2k} \cdot f(q).
    \]
\end{restatable}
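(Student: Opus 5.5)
The approach is a Bruhat-type decomposition of $\GL_{k+1}(\FF_q)\times\GL_k(\FF_q)$, transported to hypermatrices through Aitken's free transitive action (Theorem~\ref{thm:aitken}). This should play the role that Haglund's argument plays for \eqref{eq:Haglund}.

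First, use the Bruhat decompositions $\GL_{k+1}=\bigsqcup_{\sigma} U_{k+1}\,\sigma\, U_{k+1}$ and $\GL_k=\bigsqcup_\tau L_k\,\tau\, L_k$, with $U$ upper- and $L$ lower-triangular as in the proof for $P=\Delta$. Acting on $E$, this partitions the set $M_k(\FF_q)$ of nondegenerate hypermatrices into cells indexed by pairs $(\sigma,\tau)\in\Symm_{k+1}\times\Symm_k$. To each pair associate the hypermatrix $(\sigma,\tau)\circ E$, a $0/1$ array with $2k$ ones. These should be exactly the hyperrook placements: the images of the ``staircase'' support of $E$ under independent permutations of the second and third coordinates.

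Second, make the cells compatible with the restriction. Since $P\subseteq\Delta$ is an order ideal and the set of hypermatrices respecting $P$ is stable under the left action of $(U_{k+1},L_k)$ (rows pushed upward, columns pushed left), the natural cells are left orbits $(U_{k+1},L_k)\circ X$. For each $X$ I would choose a canonical representative via a row/column echelon-type normal form. The form should be $(\sigma,\tau)\circ E$ multiplied on the right by a unipotent element drawn from an explicit product of root subgroups. The cell is then parametrized as $(\FF_q^\times)^{2k}/\FF_q^\times$ times $\FF_q^{\,a}$ times further free parameters. Using the free action, the count of hypermatrices in the cell respecting $P$ becomes $(q-1)^{2k}q^{\,k^2-\text{inv}}$ times the number of free entries that are allowed by $P$. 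For the cell to count correctly, membership of the whole cell in ``respects $P$'' must be decided entirely by whether the minimal representative $(\sigma,\tau)\circ E$ respects $P$. Equivalently, the support of every element of the cell must be contained in the down-closure of the hyperrook support.

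Third, sum over the hyperrook placements $H$ respecting $P$. This gives $q^{k^2}(q-1)^{2k}f(q)$ with $f(q)=\sum_H q^{-\mathrm{inv}_P(H)}$ for a suitable inversion statistic $\mathrm{inv}_P$. Reversing the normalization as in \eqref{eq:Haglund} makes $f$ a polynomial with nonnegative integer coefficients, and $f(1)$ is the number of such $H$. Separately, count hyperrook placements respecting $P$ directly, in the same column-by-column fashion as Theorem~\ref{thm:haglund}. Placing the pair of rooks in column $j$ of the front and back slices in turn should give $(k+2-j-\lambda_j)$ choices on the front and $(k+1-j-\mu_j)$ on the back. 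Lemma~\ref{lem:partitionTransposition} would be used as needed to reconcile row- and column-based counts.

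The main obstacle is the second step. Left $(U,L)$-orbits do not in general have sizes determined cleanly by a Bruhat cell, because the stabilizer of $E$ inside $U\times L$ is nontrivial beyond the scalars. The $P_3$ computation for $k=2$ already shows that the count depends on $g_2$ in a non-uniform way. My first attempt would be to compute the stabilizer of each $(\sigma,\tau)\circ E$ explicitly; it should be a torus times a unipotent group of predictable dimension. I would then show that a cell either lies entirely inside the set respecting $P$ or meets it in an affine subspace of predictable dimension, by a triangularity argument on the positions of the $2k$ pivots. If the whole cell is not uniformly inside or outside, I would instead refine the cells by fixing the positions of leading nonzero entries slice by slice, in the spirit of the case analysis used for $P_3$.
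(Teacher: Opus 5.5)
You have the same framework as the paper, but the step that carries the theorem is missing, and your diagnosis of the difficulty is off. The shared part: Aitken's free action plus the Bruhat normal forms ($A\in\sigma_*$, $A'\in{}^*(\pi\inv)$ in the paper) give every nondegenerate hypermatrix as $(B,B')\circ H$ in exactly $q-1$ ways, with $B\in U$, $B'\in L$ and $H$ a $(\sigma,\pi)$-augmented hypermatrix. Respecting $P$ is invariant under $(U,L)$ (Propositions~\ref{prop:pull off U} and~\ref{prop:upper triangular}).

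Stabilizers are not the obstacle. By the freeness in Theorem~\ref{thm:aitken}, the stabilizer of $E$ (indeed of any nondegenerate hypermatrix) in $U\times L$ is exactly the scalar subgroup. So every left $(U,L)$-orbit has exactly $q^{k^2}(q-1)^{2k}$ elements and contains exactly one augmented hypermatrix.

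The real issue is the requirement you state in your second step, and it fails. The hyperrook placement $(m_\sigma,m_{\pi\inv})\circ E$ does \emph{not} decide whether the augmented hypermatrices for $(\sigma,\pi)$ respect $P$. In Example~\ref{ex:respect} the placement respects $P$, yet the generic augmented hypermatrix has the entry $x_{1,2}+y_{1,3}$ inside $P$. This, not a stabilizer effect, is what the $P_3$ computation reflects.

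So the whole content of the theorem is showing the following: when the placement respects $P$, the number of assignments of the $|\Inv(\sigma)|+|\Inv(\pi)|$ free entries that make the augmented hypermatrix respect $P$ is a power of $q$. Your proposal gives no argument for this. The constraints are polynomial, so the solution set is not ``an affine subspace of predictable dimension.'' In Example~\ref{eg:solve} it is the graph $y_{1,3}=-x_{1,2}$, $x_{1,3}=x_{1,2}^2$, $y_{2,3}=-x_{1,2}^2$, $x_{1,4}=x_{1,2}^3$, with $x_{1,2},x_{2,3},x_{1,5}$ free. A triangularity argument on the $2k$ pivot positions does not produce this. What is needed is the following.
\begin{enumerate}[label=(\roman*)]
\item Determine exactly which entries below and to the left of the rooks are not identically zero (Propositions~\ref{prop:potentially bad entries in front face} and~\ref{prop:potentially bad entries in back face}). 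Each such entry contains a distinguished variable linearly with coefficient $1$: $y_{i,j}$ on the front face and $x_{\ol{\pi}c(j),\ol{\pi}c(i)}$ on the back face. Distinct entries have distinct distinguished variables.
\item Show that the digraph with arcs from the other variables of an entry to its distinguished variable is acyclic (Proposition~\ref{prop:acyclic}, via the weight $\wt(y_{i,j})=\pi(i)$, $\wt(x_{i,j})=j$).
\end{enumerate}
Solving in a linear extension then gives one choice for each constrained variable and $q$ choices for every other. That yields $q^{|\Inv(\sigma)|+|\Inv(\pi)|-h}$ solutions. The exponent is nonnegative, so your normalization $f(q)=\sum_H q^{-\mathrm{inv}_P(H)}$ followed by a ``reversal'' is also wrong.

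For $f(1)$ you can simply cite Theorem~\ref{thm:hyperrookCount}. Your direct column-by-column count can be made to work, but only if you account for the back face being $wc$ with $c$ a $(k+1)$-cycle, not an independent rook placement. At step $j$, besides the $j-1$ values already used, the cycle condition excludes exactly one further allowed value. That value is the start $a$ of the partial path ending at $j$, and it is allowed because $w(a)\le k+1-\lambda_a\le k+1-\mu_j$.
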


\subsection{Hyperrook placements}\label{sec:hyperrooks}

We begin by defining the combinatorial model we use for higher-dimensional rook placements.

\begin{definition}\label{def:hyperrook placement}
Let $\sigma \in \Symm_{k+1}$ and $\pi \in \Symm_k$ and $m_{\sigma}$, $m_{\pi}$ be their corresponding permutation matrices in $\GL_{k+1}(\FF)$ and $\GL_{k}(\FF)$, respectively. A $2 \times (k+1) \times k$ \defn{hyperrook placement} is any hypermatrix of the form
\[(m_{\sigma}, m_\pi) \circ E\] where $E$ is the hypermatrix defined in Definition~\ref{E}.
\end{definition}

\begin{example}\label{eg:a hyperrook placement}
The hyperrook placement $(m_{4213}, m_{312}) \circ E$ is given by
\begin{multline*}
(m_{4213}, m_{312}) \circ E = (m_{4213}, 1) \circ \left(1, \begin{bmatrix}
0&1&0\\
0&0&1\\
1&0&0
\end{bmatrix} \right) \circ \left(
\begin{bmatrix}
1&0&0\\
0&1&0\\
0&0&1\\
0&0&0
\end{bmatrix}
\begin{bmatrix}
0&0&0\\
1&0&0\\
0&1&0\\
0&0&1
\end{bmatrix}\right)
= {}\\
\left(\begin{bmatrix}
0&0&1&0\\
0&1&0&0\\
0&0&0&1\\
1&0&0&0
\end{bmatrix}, I\right)
\circ \left(
\begin{bmatrix}
0&0&1\\
1&0&0\\
0&1&0\\
0&0&0
\end{bmatrix}
\begin{bmatrix}
0&0&0\\
0&0&1\\
1&0&0\\
0&1&0
\end{bmatrix} \right)
 =
\begin{bmatrix}
0&1&0\\
1&0&0\\
0&0&0\\
0&0&1
\end{bmatrix}
\begin{bmatrix}
1&0&0\\
0&0&1\\
0&1&0\\
0&0&0
\end{bmatrix}.
\end{multline*}
\end{example}

The question of whether there is a more intrinsic definition of hyperrook placements is raised in Section~\ref{sec:intrinsic} below.

We now build up to showing that the number of hyperrook placements that respect a plane partition $P \subseteq \Delta$ corresponds to the conjectured number of nondegenerate hypermatrices that respect $P$ as in Conjecture~\ref{mainConjecture}.  In order to do this, we need to establish some definitions.
\begin{definition}\label{wc}
Given $(\sigma, \pi) \in \Symm_{k+1} \times \Symm_k$, let $\ol{\pi}$ be the permutation in $\Symm_{k+1}$ with $\ol{\pi}(i) = \pi(i)$ for $ i \in [k]$ and $\ol{\pi}(k+1) = k+1$. Define $w,c \in \Symm_{k+1}$ by $w = \sigma \cdot \ol{\pi}$ and $c = \ol{\pi}\inv(1\ 2\ \cdots \ k\ k+1)\ol{\pi}$.
\end{definition} 
 
 \begin{definition}
    Let $\CCC_{k+1} \subseteq \Symm_{k+1} \times \Symm_{k+1}$ be the set of all ordered pairs $(\alpha,\beta)$ where $\alpha$ is an arbitrary element of $\Symm_{k+1}$ and $\beta$ is a $(k+1)$-cycle.
\end{definition}
It turns out that the map of Definition~\ref{wc} is actually a bijection, as the following proposition shows.
\begin{prop}\label{prop:curly C}
The function $\Symm_{k+1} \times \Symm_k \to \CCC_{k+1}$ that maps $(\sigma, \pi) \mapsto (w, c)$ as in Definition~\ref{wc} is a bijection.
\end{prop}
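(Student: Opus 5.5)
The plan is to exhibit an explicit inverse map $\CCC_{k+1} \to \Symm_{k+1} \times \Symm_k$, or equivalently to prove injectivity and compare cardinalities. Both sets have $(k+1)!\,k!$ elements. Indeed, $|\Symm_{k+1}\times\Symm_k| = (k+1)!\,k!$, and the number of $(k+1)$-cycles in $\Symm_{k+1}$ is $k!$. So it suffices to show that the map is well defined and injective (or surjective).

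\emph{Well definedness.} The pair $(w,c)$ lies in $\CCC_{k+1}$ because $w=\sigma\ol{\pi}$ is a permutation. Moreover, $c=\ol{\pi}\inv(1\ 2\ \cdots\ k+1)\ol{\pi}$ is a conjugate of a $(k+1)$-cycle, so it is itself a $(k+1)$-cycle.

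\emph{Recovering $(\sigma,\pi)$ from $(w,c)$.} Let $\gamma=(1\ 2\ \cdots\ k+1)$. Then $c = \ol{\pi}\inv\gamma\ol{\pi}$, so
\[
c(j) = \ol{\pi}\inv\bigl(\ol{\pi}(j)+1\bigr),
\]
with indices taken mod $k+1$. Since $\ol{\pi}(k+1)=k+1$, we have $c(k+1)=\ol{\pi}\inv(1)$, and more generally
\[
c^m(k+1) = \ol{\pi}\inv(m) \quad \text{for } m=1,\dots,k.
\]
Thus $c$ determines $\ol{\pi}\inv$ completely, hence determines $\pi$. With $\ol{\pi}$ known, $\sigma = w\,\ol{\pi}\inv$ is determined as well. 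This proves injectivity.

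\emph{Surjectivity.} Given any $(k+1)$-cycle $c$, define $\pi\in\Symm_k$ by $\pi\inv(m)=c^m(k+1)$ for $m\in[k]$. Since $c$ is a $(k+1)$-cycle, the elements $c(k+1),\dots,c^k(k+1)$ are exactly the elements of $[k]$, so $\pi$ is a genuine permutation of $[k]$. One then checks directly that $\ol{\pi}\inv\gamma\ol{\pi}=c$, and sets $\sigma=w\ol{\pi}\inv$. Either this direct construction or the counting argument finishes the proof.

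The only step needing care is the orbit-index bookkeeping for the conjugate cycle, namely confirming the correct direction convention for $\ol{\pi}\inv\gamma\ol{\pi}$. Everything else is routine.
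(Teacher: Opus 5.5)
Your proof is correct, but it gets injectivity differently from the paper. The paper argues abstractly. From $\ol{\pi_1}\inv\gamma\ol{\pi_1}=\ol{\pi_2}\inv\gamma\ol{\pi_2}$, with $\gamma=(1\ 2\ \cdots\ k+1)$, it deduces that $\tau=\ol{\pi_2}\,\ol{\pi_1}\inv$ satisfies $\tau\gamma\tau\inv=\gamma$. That is, $(\tau(1)\ \tau(2)\ \cdots\ \tau(k+1))=(1\ 2\ \cdots\ k+1)$, and since $\tau(k+1)=k+1$, $\tau$ must be the identity. Bijectivity then follows from the cardinality count alone.

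You instead read $\pi$ directly off $c$. Writing $c=(k+1\ \ \ol{\pi}\inv(1)\ \ \ol{\pi}\inv(2)\ \cdots\ \ol{\pi}\inv(k))$, i.e.\ $\pi\inv(m)=c^m(k+1)$, gives an explicit inverse $(w,c)\mapsto(w\ol{\pi}\inv,\pi)$. So surjectivity can be checked directly and counting $(k+1)$-cycles becomes unnecessary. Both arguments rest on the same relabeling fact for conjugates of a cycle. The paper's version is a bit shorter; yours is constructive, and it also states explicitly that $c$ is a $(k+1)$-cycle, which the paper leaves implicit.

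Your worry about conventions is harmless. With the paper's right-to-left composition (as in Lemma~\ref{lem:hyperrookWC}, where $w(j)=\sigma(\ol{\pi}(j))$), your formula $c^m(k+1)=\ol{\pi}\inv(m)$ is exactly right. Under the opposite convention you would get $c^m(k+1)=\ol{\pi}(m)$, which determines $\pi$ equally well.
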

\begin{proof}
First we show the map is injective. 
For $(\sigma_1, \pi_1), (\sigma_2, \pi_2) \in \Symm_{k+1} \times \Symm_k$, suppose
\[\ol{\pi_1}\inv(1\ 2\ \cdots\ k+1)\ol{\pi_1} = \ol{\pi_2}\inv(1\ 2\ \cdots\ k+1)\ol{\pi_2}\] and $\sigma_1 \cdot \ol{\pi_1} = \sigma_2 \cdot \ol{\pi_2}$. Then 
\[\ol{\pi_2}\,\ol{\pi_1}\inv(1\ 2\ \cdots\ k+1)(\ol{\pi_2}\,\ol{\pi_1}\inv)\inv = (1\ 2\ \cdots k+1).\] Therefore
\[\big( \ol{\pi_2}\,\ol{\pi_1}\inv(1) \ \ \ol{\pi_2}\,\ol{\pi_1}\inv(2) \ \ \cdots \ \ \ol{\pi_2}\,\ol{\pi_1}\inv(k) \ \ \ol{\pi_2}\inv \ol{\pi_1}(k+1) \big) = (1\ 2\ \cdots\ k \ k+1).\] Notice $\ol{\pi_1}\inv(k+1) = \ol{\pi_2}(k+1) = k+1$ so $\ol{\pi_2}\,\ol{\pi_1}\inv(k+1) = k+1$.  Therefore we must have $\ol{\pi_2}\ol{\pi_1}\inv(i) = i$ for all $i \in [k]$.  But then $\pi_1(i) = \ol{\pi_1}(i) = \ol{\pi_2}(i) = \pi_2(i)$ for all $i \in [k]$, and so $\pi_1 = \pi_2$. Consequently, $\ol{\pi_1} = \ol{\pi_2}$ and by the assumption $\sigma_1 \cdot \ol{\pi_1} = \sigma_2 \cdot \ol{\pi_2}$ we get $\sigma_1 = \sigma_2$. Therefore $(\sigma_1, \pi_1) = (\sigma_2, \pi_2)$ and the map is injective.
Since the domain and codomain both have size $(k + 1)! \cdot k!$, the map is a bijection, as claimed.
\end{proof}

Consider the pair $(\sigma, \pi) = (4213, 231)$.  We compute $w = 4213 \cdot 2314 = 2143,$ $c = 3124 \cdot (1\ 2\ 3 \ 4) \cdot 2314 = (3\ 1 \ 2 \ 4)$, and $wc = 2143 \cdot (3\ 1\ 2\ 4) = 1324$. Comparing this to the hyperrook placement $(m_{4213}, m_{312}) \circ E$ in Example~\ref{eg:a hyperrook placement}, we see the $1$s are exactly in positions $(1,w(j), j)$ and $(2, wc(j), j)$ for $j = 1,2,3$. The next lemma shows this is not a coincidence. 

\begin{lemma}\label{lem:hyperrookWC}
    Let $(\sigma, \pi) \in \Symm_{k+1} \times \Symm_{k}$ and $(w,c)$ be defined as above. Let $M$ be the hypermatrix of format $2 \times (k+1) \times k$ with $M_{1,i,j} = 1$ if $i = w(j)$ and $M_{2,i,j} = 1$ if $i = wc(j)$ for $j \in  [k]$, and all other entries zero. Then $(m_{\sigma}, m_{\pi\inv}) \circ E = M$.
\end{lemma}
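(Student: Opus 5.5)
The plan is to compute $(m_{\sigma}, m_{\pi\inv}) \circ E$ directly, using the definition of the action as simultaneous column operations followed by simultaneous row operations. By the definition of the action, the two factors act on different indices and so commute. Hence we may first apply $(1, m_{\pi\inv})$ and then $(m_\sigma, 1)$, tracking where the single $1$ in each column of each face ends up.

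\emph{Step 1 (column operation).} Acting by a matrix $A=(a_{n,j})$ in the third direction replaces column $n$ by $\sum_j a_{n,j}\cdot(\text{column } j)$. For $A = m_{\pi\inv}$ we have $a_{n,j}=1$ exactly when $n=\pi\inv(j)$, i.e.\ $j=\pi(n)$. So the new column $n$ of each face is the old column $\pi(n)$. Since $E_{1,i,j}=1$ iff $i=j$ and $E_{2,i,j}=1$ iff $i=j+1$, after this step column $n$ of the front face has its unique $1$ in row $\pi(n)$, and column $n$ of the back face has its unique $1$ in row $\pi(n)+1$.

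\emph{Step 2 (row operation).} Acting by $m_\sigma$ in the second direction replaces row $n$ by $\sum_i (m_\sigma)_{n,i}\cdot(\text{row } i)$. This is row $i$ exactly when $n=\sigma(i)$, so old row $i$ moves to row $\sigma(i)$. Thus the front face has, in column $n$, a single $1$ in row $\sigma(\pi(n))$, and the back face has a single $1$ in row $\sigma(\pi(n)+1)$. All other entries are zero.

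\emph{Step 3 (matching with $w$ and $wc$).} For $n\in[k]$ we have $\ol{\pi}(n)=\pi(n)$, so $\sigma(\pi(n)) = \sigma\ol{\pi}(n) = w(n)$. Also
\[
wc(n) = \sigma\ol{\pi}\,\ol{\pi}\inv(1\ 2\ \cdots\ k+1)\,\ol{\pi}(n) = \sigma\big((1\ 2\ \cdots\ k+1)(\pi(n))\big) = \sigma(\pi(n)+1),
\]
where the last equality uses $\pi(n)\le k$, so the cycle simply adds one. This matches the description of $M$ exactly.

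The only real obstacle is bookkeeping. One must pin down the index convention of the action (row $n$ of $A$ gives the coefficients of the new slice $n$) and check that $m_{\pi\inv}$, not $m_\pi$, moves column $\pi(n)$ to position $n$. Step~3 then reduces to the observation that $\ol{\pi}$ agrees with $\pi$ on $[k]$ and that the long cycle acts as $x\mapsto x+1$ on $[k]$. As a sanity check, I would compare the result against the worked example $(\sigma,\pi)=(4213,231)$, where $\pi\inv = 312$ and the positions of the $1$s agree with $w=2143$ and $wc=1324$.
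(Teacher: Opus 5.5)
Your proof is correct and follows the paper's argument essentially step for step. You first apply $(1,m_{\pi\inv})$ so that new column $n$ is old column $\pi(n)$ of $E$, then apply $(m_\sigma,1)$ so that row $i$ moves to row $\sigma(i)$, and finally identify $\sigma(\pi(n))=w(n)$ and $\sigma(\pi(n)+1)=wc(n)$ using that $\ol{\pi}$ agrees with $\pi$ on $[k]$ and that the long cycle acts as $x\mapsto x+1$ there.
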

\begin{proof}
Denote by $A_1$, \ldots, $A_k$ the slices of $E = (e_{i_1, i_2, i_3})$ in the third direction and by $B_1, \ldots, B_k$ the slices of $(1, m_{\pi\inv}) \circ E$ in the third direction.  By the definition of the group action, we have for $n = 1, \ldots, k$ that
\[
B_n = (m_{\pi\inv})_{n,1} \cdot A_{1} + (m_{\pi\inv})_{n,2}\cdot A_{2} + \cdots  + (m_{\pi\inv})_{n,k} \cdot A_{k} = A_{\pi(n)}.
\]
Thus, the $(i_1, i_2, i_3)$-entry of $(1, m_{\pi\inv}) \circ E$ is $e_{i_1, i_2, \pi(i_3)}$.  Considering in the same way the action of $(m_{\sigma}, 1)$ on $(1, m_{\pi\inv}) \circ E$ by permuting slices in the second direction, we conclude that the $(i_1, i_2, i_3)$-entry of $(m_{\sigma},m_{\pi\inv})\circ E$ is $e_{i_1, \sigma^{-1}(i_2), \pi(i_3)}$.  Since $e_{1, i, j}$ is $1$ if $i = j$ and is $0$ otherwise, and $e_{2, i, j}$ is $1$ if $i = j + 1$ and is $0$ otherwise, we conclude that the $(1, i_2, i_3)$-entry of $(m_{\sigma},m_{\pi\inv})\circ E$ is $1$ when $\pi(i_3) = \sigma^{-1}(i_2)$ and the $(2, i_2, i_3)$-entry is $1$ when $\pi(i_3) + 1= \sigma^{-1}(i_2)$.
Equivalently, the $(1,i_2,i_3)$ entry is  $1$ when \[w(i_3) = \sigma ( \ol{\pi}(i_3)) = i_2\] for $1 \leq i_3 \leq k$, and the $(2,i_2,i_3)$-entry is $1$ when $\sigma(\ol{\pi}(i_3) + 1) = i_2$ for $1 \leq i_3 \leq k$. Notice $\ol{\pi}(i_3) + 1 = (1\ 2\ \cdots \ k+1)\ol{\pi}(i_3)$ for all $1 \leq i_3 \leq k$. Therefore 
\[\sigma(\ol{\pi}(i_3) + 1) = \sigma(1\ 2\ \cdots \ k+1)\ol{\pi}(i_3) = \sigma \ol{\pi} \ol{\pi}\inv (1\ 2\ \cdots \ k+1)\ol{\pi}(i_3) = wc(i_3)\] for all $1 \leq i_3 \leq k$. Thus the $(2, i_2, i_3)$-entry of $(m_\sigma, m_{\pi\inv})\circ E$ is $1$ precisely when $wc(i_3) = i_2$ for $1 \leq i_3 \leq k$. Since all other entries of $(m_{\sigma},m_{\pi\inv}) \circ E$ are $0$, we conclude that $(m_\sigma, m_{\pi\inv}) \circ E = M$, as claimed.
\end{proof}

\begin{cor}
\label{lem:wc restrictions}
Let $P \subseteq \Delta$ be a plane partition of format $2 \times (k+1) \times k$. A hyperrook placement $(m_{\sigma}, m_{\pi\inv}) \circ E$ respects $P$ if and only if the following two conditions are satisfied for all $1 \leq j \leq k$:
\begin{enumerate}
\item{ $w(j) \leq k+1 - \lambda_j$} and
\item{$wc(j) \leq k+1 - \mu_j$},
\end{enumerate}
where $w$ and $c$ are as in Definition~\ref{wc}.
\end{cor}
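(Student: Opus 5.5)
This corollary follows by combining Lemma~\ref{lem:hyperrookWC} with the definition of what it means for a hypermatrix to respect $P$. First, by Lemma~\ref{lem:hyperrookWC}, the hyperrook placement $(m_\sigma, m_{\pi\inv}) \circ E$ equals the hypermatrix $M$ whose only nonzero entries are:
\begin{itemize}
\item in the front face, the entries at positions $(1, w(j), j)$ for $j \in [k]$;
\item in the back face, the entries at positions $(2, wc(j), j)$ for $j \in [k]$.
\end{itemize}
All of these nonzero entries equal $1$. The front face $M_1$ thus has exactly one nonzero entry in each column $j$, located in row $w(j)$. Likewise, the back face $M_2$ has exactly one nonzero entry in column $j$, located in row $wc(j)$.

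Next, recall that $M$ respects $P = \brac{\lambda, \mu}$ precisely when $M_1$ respects $\lambda$ and $M_2$ respects $\mu$. By Definition~\ref{def:respects}, $M_1$ respects $\lambda$ if and only if every entry $(M_1)_{i,j}$ with $i > k+1-\lambda_j$ vanishes. Since column $j$ of $M_1$ has its unique nonzero entry in row $w(j)$, this is equivalent to requiring $w(j) \le k+1-\lambda_j$ for every $j \in [k]$, which is condition (1).

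The same argument applies to $M_2$ and $\mu$. Column $j$ of $M_2$ has its unique nonzero entry in row $wc(j)$, so $M_2$ respects $\mu$ if and only if $wc(j) \le k+1-\mu_j$ for all $j \in [k]$, which is condition (2).

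The only point requiring care is to match the form of the hyperrook placement with the one in Lemma~\ref{lem:hyperrookWC}. That lemma is stated for $(m_\sigma, m_{\pi\inv}) \circ E$, which is exactly the form appearing in the corollary, with $(w,c)$ computed from $(\sigma,\pi)$ as in Definition~\ref{wc}. Once this is checked, both equivalences are immediate, and the hypothesis $P \subseteq \Delta$ is not actually needed in the argument.
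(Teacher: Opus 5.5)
Your proposal is correct and follows essentially the same route as the paper: you use Lemma~\ref{lem:hyperrookWC} to read off that the only nonzero entries of $(m_\sigma, m_{\pi\inv})\circ E$ are the $1$s at $(1,w(j),j)$ and $(2,wc(j),j)$, and then check the definition of respecting $\lambda$ and $\mu$ column by column. You are also right that the hypothesis $P \subseteq \Delta$ is never used in this argument.
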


\begin{proof}
    Let $(m_{\sigma}, m_{\pi\inv})\circ E$ be a hyperrook placement. By definition, $M := (m_{\sigma}, m_{\pi\inv})\circ E$ respects $P$ if and only if $M_{1,i,j} = 0$ for all $i > k+1 - \lambda_j$ and $M_{2,i,j} = 0$ for all $i > k+1 - \mu_j$.   By Lemma~\ref{lem:hyperrookWC}, we have for all $1 \leq j \leq k$ that $M_{1,i,j} = 1$ if $i = w(j)$ and $M_{2,i,j} = 1$ if $i = wc(j)$, and $M_{h,i,j} = 0$, otherwise.   Thus $M$ respects $P$ if and only if the indices of the nonzero entries $M_{1, w(j), j}$ and $M_{2, wc(j), j}$ do \emph{not} satisfy the inequalities $w(j) > k + 1 - \lambda_j$ or $wc(j) > k + 1 - \mu_j$, and this is precisely the claim.
\end{proof}

Using the previous fact we can now show the number of hyperrook placements that respect $P \subseteq \Delta$ corresponds to the conjectured number of nondegenerate hypermatrices that respect $P$ at $q = 1$.

\begin{thm}\label{thm:hyperrookCount}
Fix a format $2 \times (k+1) \times k$ and let $P \subseteq \Delta$ be a plane partition. There are 
\[(k+1 - \lambda_1)\cdots (2 - \lambda_k) \cdot (k- \mu_1) \cdots (2 - \mu_{k-1})(1-\mu_k)\] elements $(w,c) \in \CCC_{k+1}$ such that $w(i) \leq k+1 - \lambda_i$ and $wc(i) \leq k+1-\mu_i$ for $1\leq i \leq k$.
\end{thm}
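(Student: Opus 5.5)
The plan is to reduce the count to two successive Ferrers-board counts: first the placements $w$, then, for each $w$, the $(k+1)$-cycles $c$. By Corollary~\ref{lem:wc restrictions} and Proposition~\ref{prop:curly C}, the quantity to count is the number of pairs $(w,c)\in\CCC_{k+1}$ with $w(i)\le k+1-\lambda_i$ and $wc(i)\le k+1-\mu_i$ for $i\in[k]$. Note that $w(k+1)$ and $c(k+1)$ are unconstrained except through bijectivity. I will split the count as
\[
\sum_{w}\#\{c \text{ a } (k+1)\text{-cycle} : c(i)\in A_i(w) \text{ for } i\in[k]\},
\qquad A_i(w)=w^{-1}\bigl([k+1-\mu_i]\bigr).
\]
The outer sum runs over $w\in\Symm_{k+1}$ with $w(i)\le k+1-\lambda_i$ for $i\le k$. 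Since $\lambda_{k+1}=0$, the first half of Theorem~\ref{thm:haglund} says there are exactly $(k+1-\lambda_1)\cdots(2-\lambda_k)$ such $w$. It would therefore suffice to show that the inner count equals $(k-\mu_1)\cdots(1-\mu_k)$ for every admissible $w$.

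The inner problem is to count $(k+1)$-cycles on a board whose allowed sets $A_1\supseteq A_2\supseteq\cdots\supseteq A_k$ are nested, with $|A_i|=k+1-\mu_i$. The condition $P\subseteq\Delta$ gives $\mu_i\le k-i$, so $|A_i|\ge i+1$. My first attempt is the standard cycle-building argument. Assign the images $c(i)$ in increasing order of $|A_i|$, that is, $i=1,2,\dots,k$, and then let $c(k+1)$ be forced. At each step the partially defined map is a disjoint union of paths. The image $c(i)$ must avoid the $i-1$ values already used and must not equal the initial vertex of the path ending at $i$, since that would close a cycle shorter than $k+1$. The step-$i$ factor is then $|A_i|-(i-1)-\varepsilon_i$, where $\varepsilon_i\in\{0,1\}$ records whether the forbidden closing vertex lies in $A_i$. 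For the product to come out right I need $\varepsilon_i=1$ and all previously used values to lie in $A_i$. Nestedness gives the second condition only when the sets shrink along the order of choice, which is the opposite of what happens here.

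This is the step I expect to be the main obstacle. The fiber over $w$ may genuinely depend on $w$, because the $A_i(w)$ are arbitrary nested subsets rather than initial segments, and whether the closing vertex lies in $A_i$ depends on $w$. If so, I will abandon the fiberwise claim and instead interleave the choices of $w$ and $c$. To do this, reparametrize by $u=wc$, so that the task is to count pairs $(w,u)$ with $w(i)\le k+1-\lambda_i$, $u(i)\le k+1-\mu_i$, and $w^{-1}u$ a $(k+1)$-cycle. Then build the pair column by column from $i=k$ down to $i=1$, choosing $w(i)$ and then $u(i)$. The induction hypothesis is that the number of completions depends only on the shapes, which I would prove by an induction on $k$ that deletes one column. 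Concretely, removing column $k$, which has $\lambda_k=\mu_k=0$, and contracting the cycle $w^{-1}u$ at the corresponding letter should give a $(k, k-1)$ instance with shapes $\lambda,\mu$ truncated. The aim is to show that this contraction is $(2-\lambda_k)(1-\mu_k)$-to-one onto the smaller instance, with an analogous argument peeling off columns in general.

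As a fallback, I will check the formula directly for $\mu=\emptyset$. There the fiber count is just the number of $(k+1)$-cycles, namely $k!$, and the closing vertex always lies in the full set, so the argument goes through. I will also check $k=2$ against the three plane partitions computed in Section~\ref{sec:main conjecture}, to calibrate which ordering of choices makes each step's factor shape-independent.
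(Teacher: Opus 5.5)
Your reduction to a fiberwise count, and your count of admissible $w$ via Theorem~\ref{thm:haglund}, are fine. The gap is the central claim: that for every admissible $w$ the number of $(k+1)$-cycles $c$ with $c(i)\in A_i(w)$ equals $(k-\mu_1)\cdots(1-\mu_k)$. You never establish it.

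You set up the right argument and then misdiagnose it in two places.

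First, the nesting goes the other way. Since $\mu$ is weakly decreasing, $[k+1-\mu_1]\subseteq[k+1-\mu_2]\subseteq\cdots$, so $A_1\subseteq A_2\subseteq\cdots\subseteq A_k$. Your own phrase ``increasing order of $|A_i|$, that is, $i=1,\dots,k$'' already says this. Choosing $c(1),\dots,c(k)$ in this order gives $c(j)\in A_j\subseteq A_i$ for $j<i$, so all used values lie in $A_i$. That condition needs the sets to grow along the order of choice, not shrink.

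Second, you never check $\varepsilon_i=1$. This is exactly where the hypotheses $w(j)\le k+1-\lambda_j$ and $\mu\subseteq\lambda$ enter, and your proposal uses neither. The initial vertex $s$ of the path ending at $i$ lies in $[i]$, because every other vertex on that path already has an assigned image. For $j\le i$,
\[
w(j)\le k+1-\lambda_j\le k+1-\mu_j\le k+1-\mu_i .
\]
Hence $[i]\subseteq A_i$, so $s\in A_i$, and the $i$th factor is exactly $(k+1-\mu_i)-(i-1)-1=k+1-i-\mu_i$. After $k$ steps the partial map is a single path ending at $k+1$, so $c(k+1)$ is forced.

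So your suspicion that the fiber depends on $w$ is unfounded here. It does depend on $w$ when $\mu\not\subseteq\lambda$, which is why that hypothesis is essential.

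As written, you abandon this argument for a fallback that is only a sketch. The claim that contracting at column $k$ is $(2-\lambda_k)(1-\mu_k)$-to-one is asserted, not proved. Your statement that column $k$ has $\lambda_k=\mu_k=0$ is also false in general: $P\subseteq\Delta$ forces $\mu_k=0$ but allows $\lambda_k=1$.

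With the two observations above, your first attempt becomes a complete proof, and a more direct one than the paper's. The paper proves the same $w$-independence by induction on $k$:
\begin{enumerate}
\item It chooses $c(1)$ among the $k-\mu_1$ values $a\neq 1$ with $w(a)\le k+1-\mu_1$, using $w(1)\le k+1-\lambda_1\le k+1-\mu_1$ to see that $1$ is one of the admissible values.
\item It deletes $1$ from the cycle and standardizes $w$, giving an instance of size $k$.
\item It checks, by writing down explicit inverse maps, that this is a bijection.
\end{enumerate}
That induction is your sequential count unrolled one step at a time, at the cost of verifying the bijections by hand.
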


\begin{remark}
    It's not hard to see that if $P$ is a plane partition such that $P \not\subseteq \Delta$, then the given product is $0$.
\end{remark}

\begin{proof}
Let $P \subseteq \Delta$ and $w \in \Symm_{k+1}$ with $w(i) \leq k+1 - \lambda_i$ for $i \in [k]$. We will first show the number of $(k+1)$-cycles $c \in \Symm_{k+1}$ such that $wc(i) \leq k+1 - \mu_i$ for $i \in [k]$ is \[(k-\mu_1)\cdot (k-1-\mu_2) \cdots (1-\mu_k),\] independent of the choice of $w$. The general strategy we will use to prove this claim is to induct on $k$, so we need some way to relate these sets to a previous case.
Since $w \in \Symm_{k+1}$, there are $k+1 - \mu_i$ integers $j \in [k+1]$ such that $w(j) \leq k+1 - \mu_i$. Also, since $w(i) \leq k+1 - \lambda_i$ and $\mu_i \leq \lambda_i$, we know $w(i) \leq k+1 - \mu_i$ and particular $w(1) \leq k+1 - \mu_1$. For a $(k+1)$-cycle $c \in \Symm_{k+1}$ we know $c(i) \ne i$, so if $c$ is some $(k+1)$-cycle such that $wc(i) \leq k+1 - \mu_i$ then $c(1) \ne 1$. If $a_1, a_2, \ldots, a_{k - \mu_1} \in [k+1]$ are the $k - \mu_1$ integers with $a_j \ne 1$ and \[w(a_j) \leq k+1 - \mu_1,\] 
then we know $c(1) = a_j$ for some $j = 1,2, \ldots, k - \mu_1$, so there are $(k - \mu_1)$ choices of $c(1)$. 
We now define the sets $C_{1}, C_{2}, \ldots, C_{{k- \mu_1}}$ where 
$$C_{j} : = \{ c : c(1) = a_j \textnormal{ and } wc(i) \leq k+1 - \mu_i \textnormal{ for all } i \in [k]\}.$$
We will show that each of these sets has the same size, by giving bijections between them and a set coming from a plane partition in a smaller box.  To this end, we make some more definitions.

Let $P' = \brac{ \lambda', \mu'}$ be the plane partition of format $2 \times k \times (k-1)$ where $\lambda_i' = \lambda_{i+1}$ and $\mu_i' = \mu_{i+1}$ for $i \in [k-1]$. Since $P \subseteq \Delta = \Delta_{1,k}$ we have $\lambda_i \leq k+1 - i$ and $\mu_i \leq k - i$. Therefore, by definition $\lambda_i' \leq k+1 - (i+1) = k - i$ and $\mu_i' \leq k - (i+1) = (k-1) - i$ so $P' \subseteq \Delta_{1, k-1}$. We now define $w' \in \Symm_k$ by 
\[w'(i) = \begin{cases} w(i+1) & \textnormal{ if }w(i+1) < w(1)\\ w(i+1)-1 & \textnormal{ if } w(i+1) > w(1) \end{cases}\] and we claim $w'(i) \leq k - \lambda_i'$ for $i \in [k-1]$.
We have two cases: 
if $w'(i) = w(i+1)$ then 
\[w'(i) < w(1) \leq k+1 - \lambda_1 \leq k + 1 - \lambda_{i+1} = k+1 - \lambda_i'\] and if $w'(i) = w(i + 1) - 1$ then
$$w'(i) \leq (k+1 - \lambda_{i+1}) - 1 = k - \lambda_{i+1} = k - \lambda_{i}'.$$ Therefore $w'(i) \leq k - \lambda'_i$ for all $i \in [k-1]$ as claimed. We now define $B$ to be the set of $k$-cycles $c' \in \Symm_k$ such that $w'c'(i) \leq k - \mu'_i$ for all $i \in [k - 1]$

We construct a collection of bijections $f_{1}, f_{2}, \ldots, f_{{k - \mu_1}}$ where $$f_{j} : C_{j} \to B.$$ For $c \in C_{j}$ with $c = (1 \ c(1) \ \cdots \ c^k(1) )$ we define $f_{j}$ by 
$$f_{j}(c) = \left((c(1) - 1) \ \ (c^2(1) - 1)\ \ \cdots \ \ (c^k(1) - 1) \right).$$
This is clearly a $k$-cycle in $\Symm_k$, but for $f_{j}$ to be well defined we must check that $w' (f_{j}(c)(i)) \leq k - \mu'_i$ for $i \in [k-1]$. To do this we consider two cases on $i \in [k-1]$. First, if $i \ne c^k(1) - 1$, then by definition $f_{j}(c)(i) = c(i+1) - 1$. Therefore $w' (f_{j}(c)(i)) = w'(c(i + 1) - 1)$ so we have the following two possibilities:
$$w' (f_{j}(c)(i)) = \begin{cases} w(c(i+1)) &\textnormal{ if }w(c(i+1)) < w(1)\\ w(c(i+1))-1 &\textnormal{ if } w(c(i+1)) > w(1) \end{cases}.$$ 
If $w' (f_{j}(c)(i)) = w(c(i+1))$ then
$w(c(i+1)) < w(1) \leq k+1 - \mu_1$ and therefore 
$$w' (f_{j}(c)(i)) \leq k - \mu_1 \leq k - \mu_{i+1} = k - \mu'_i.$$ 
If $w' (f_{j}(c)(i)) = w(c(i+1)) - 1$ then by the definition of $C_j$ we have $w(c(i+1)) \leq k+1 - \mu_{i+1}$ and so
$$w' (f_{j}(c)(i)) = w(c(i+1)) - 1 \leq k - \mu_{i+1} = k - \mu'_{i}.$$
Therefore if $i \ne c^k(1) - 1$ we conclude that $w'(f_{j}(c)(i)) \leq k - \mu'_i$.
Now we prove the case when $i = c^k(1) - 1$. By definition $f_{j}(c)(c^k(1) - 1) = c(1) - 1$ and again we have the two cases. First, if $w'(c(1) - 1) = w(c(1)) < w(1)$ then since $w(1) < k+1  - \mu_1$ we have
$$w'(c(1) - 1) \leq w(1) - 1 \leq k - \mu_1 .$$ If $w'(c(1) - 1) = w(c(1)) - 1$ we have 
$$w'(c(1) - 1) \leq (k+1 - \mu_1) -1 = k - \mu_1,$$ so either way so $w' (f_{j}(c)(c^k(1) - 1)) \leq k - \mu_1 \leq k - \mu_{c^k(1)} = k - \mu'_{c^k(1) -1}$. Thus, $w'(f_{j}(c)(i)) \leq k - \mu_{i}'$ for all $i \in [k-1]$ so $f_{j}$ is well defined for $j = 1, 2, \ldots, k- \mu_1$. 

Next we show each $f_{j}$ is a bijection by constructing a family of inverse functions $g_{j}: B \to C_{j}$ for $j = 1,2, \ldots, k - \mu_1$. Let $c_* \in B$ which we can write as
$$c_*= (1 \ c_*(1) \ \cdots \ c_*^{k-1}(1)),$$ and since $a_j > 1$ there exists some $n >0$ such that $c_*^n(1) = a_j - 1$. Now define 
$$g_{j}(c_*) = \left(1\ a_j \ (c_*^{n+1}(1) + 1) \ \cdots \ (c_*^{n-1}(1) + 1) \right),$$ which is clearly a $(k+1)$-cycle in $\Symm_{k+1}$ with $g_{j}(c_*)(1) = a_j$. But, to show that $g_{j} \in C_{j}$ we still need to verify $w(g_{j}(c_*)(i)) \leq k+1 - \mu_i$ for $i \in [k]$. We again consider two cases on $i$.  If $i \ne c_*^{n-1}(1)$ then by definition $g_{j}(c_*)(i + 1) = c_*(i) + 1$. By the definition of $w'$ we know $w(i + 1) \leq w'(i) + 1$ and thus
$$w( g_{j}(c_*)(i+1)) = w(c_*(i) + 1) \leq w'c_*(i) + 1 \leq k+1 - \mu'_i = k + 1 - \mu_{i+1}.$$ Again by definition $g_{j}(c_*)(c_*^{n-1}(1) + 1) = 1$ so
$$w(g_{j}(c_*)(c_*^{n-1}(1) + 1)) = w(1) \leq k + 1 - \lambda_1 \leq k+1 - \mu_{c_*^{n-1}(1) + 1}.$$ Finally,
$w(g_{j}(c_*)(1)) = w(a_j)$ and by assumption $w(a_j) \leq k+1 - \mu_1$, as needed. Therefore $w(g_{j}(c_*)(i)) \leq k+1 - \mu_i$ for all $i \in [k]$ so $g_{j}$ is well defined for $j = 1, 2, \ldots, k - \mu_1$. 

Now we show $g_{j}$ is a left and right inverse of $f_{j}$. Let $c \in C_{j}$ with $c = (1\ a_j \ c^2(1) \ \cdots \ c^k(1))$, then 
$$g_{j} \circ f_{j}(c) = g_{j} \left((a_j -1) \ \ (c^2(1) - 1)\ \ \cdots \ \ (c^k(1) - 1) \right) = (1\ a_j \ c^2(1) \ \cdots \ c^k(1)).$$ For $c_* \in B$ with $c_* = (1 \ c_*(1) \  \cdots \ c_*^{k-1}(1))$ we have
\begin{align*}
    f_{j} \circ g_{j}(c_*) &=f_{j}\left(1\ a_j \ (c_*^{n+1}(1) + 1) \ \cdots \ (c_*^{n-1}(1) + 1) \right) \\
    &= f_{j}\left(1\ (c_*^n(1) + 1) \ (c_*^{n+1}(1) + 1) \ \cdots \ (c_*^{n-1}(1) + 1) \right)
    \end{align*}
    by assumption that $c_*^n(i) + 1 = a_j$. Then we can see 
$$f_{j}\left(1\ (c_*^n(1) + 1) \ (c_*^{n+1}(1) + 1) \ \cdots \ (c_*^{n-1}(1) + 1) \right) = ( c_*^n(1) \ c_*^{n+1}(1) \ \cdots \ c_*^{n-1}(1) ) = c_*$$ by the translation invariance of cycles. Therefore $f_{j}$ is a bijection which shows $|C_{j}| = |B|$. Furthermore, since any $c$ such that $wc(i) \leq k+1 - \mu_i$ is in exactly one set $C_{j}$ we conclude the number of such $c$ is 
$$\left| \bigsqcup_{j = 1}^{k - \mu_1} C_{j}\right| = \sum_{j = 1}^{k - \mu_1} |C_{j}| = (k- \mu_1)|B|.$$

Equipped with this previous fact we are ready to compute the number of $(k+1)$-cycles $c \in \Symm_{k+1}$ such that $wc(i) \leq k+1 - \lambda_i$ for some fixed $w$ via induction. For $2 \times (n+1) \times n$ hypermatrices we induct on $n$. For the base case $n = 1$, any plane partition must be contained in $\Delta_{1,1}$ where $\lambda_1 = 1$ and $\mu_1 = 0$. 
For any $w \in \Symm_2$ and any plane partition $P$ we always have $wc(1) \leq 2$ and $wc(2) \leq 2$, and so the single choice $c = (1\ 2)$ holds for all $w$. So the number of such $c$ is $1 - \mu_1 = 1$ as claimed. 
For $n = k-1$ assume that for any plane partition $P \subseteq \Delta_{1,k-1}$ and $w \in \Symm_k$ with $w(i) \leq k - \lambda_i$ for $i \in [k-1]$ that the number of $k$-cycles $c \in \Symm_{k}$ such that $wc(i) \leq k - \mu_i$ for $i \in [k-1]$, is
$$((k-1) - \mu_1)( (k-2) - \mu_2) \cdots (1 - \mu_{k-1}).$$
Now let $P \subseteq \Delta_{1,k}$ and $w \in \Symm_{k+1}$ with $w(i) \leq k+1 - \lambda_i$ for $i \in [k+1]$. By what we have just shown the number of $c$ such that $wc(i) \leq k+1 - \mu_i$ is $(k - \mu_1)|B|$. By the inductive hypothesis 
$$|B| = ((k-1) - \mu_2)( (k-2) - \mu_3) \cdots (1 - \mu_{k}),$$ so the number of $c$ is 
$$(k - \mu_1)((k-1) - \mu_2)( (k-2) - \mu_3) \cdots (1 - \mu_{k}),$$ as claimed.

Now we count the pairs $(w,c) \in \CCC_{k+1}$ that respect some $P \subseteq \Delta$. Let $w \in \Symm_{k+1}$ such that $w(i) \leq k+1 - \lambda_i$ for all $i \in [k+1]$. It is straightforward that the number of $w$ is 
$$(k+1 - \lambda_1)(k - \lambda_2) \cdots (2 - \lambda_k)$$ and for any $w$ we have the same number of $(k+1)$-cycles $c$ such that $wc(i) \leq k+1 - \mu_i$ for all $i \in [k+1]$. From our counting above we conclude the number of pairs $(w,c)\in \CCC_{k+1}$ that respect $P$ is 
$$(k+1 - \lambda_1)(k - \lambda_2) \cdots (2 - \lambda_k)\cdot(k-\mu_1)((k-1) - \mu_2) \cdots (1 - \mu_k),$$ as claimed.
\end{proof}

\subsection{Decomposing the space of nondegenerate hypermatrices}
\label{sec:Bruhat}

The rest of this section is devoted to the proof of Theorem~\ref{thm:weak form of conjecture}.  The main idea of the proof is to combine the rook placements of Section~\ref{sec:hyperrooks} with the \emph{Bruhat decomposition} of the general linear group.  We begin by establishing some definitions.

\begin{definition}\label{def:inversions}
For a permutation $w \in \Symm_n$, we say that a pair $(i, j)$ of integers with $1 \leq i < j \leq n$ is an \defn{inversion} of $w$ if $w(i) > w(j)$, and we denote by $\Inv(w)$ the \defn{inversion set} 
\[
\Inv(w) = \{(i, j) : (i, j) \text{ is an inversion of } w\}.
\]
\end{definition}

For example, the permutation $w = 25314 \in \Symm_5$ has $\Inv(w) = \{(1, 4), (2, 3), (2, 4), (2, 5), (3, 4)\}$.  Observe that, by definition,
\[
\Inv(w\inv) = \bigl\{ (w(j), w(i)) : (i, j) \in \Inv(w)\bigr\}, 
\]
so in particular the two sets $\Inv(w)$ and $\Inv(w\inv)$ are equinumerous.

\begin{definition}
    For a permutation $w \in \Symm_n$, an $n \times n$ matrix $A$ over a field $\FF$ is a \defn{NW $w$-augmented matrix} if it has the following properties: $A_{w(i),i} = 1$ for all $i \in [n]$, if $(i, j)$ is an inversion of $w$ then the entry $A_{w(j), i}$ can be any element of $\FF$, and all other entries of $A$ are equal to $0$. 
    Similarly, a \defn{SE $w$-augmented matrix} over a field $\FF$ is a matrix $A$ with the following properties: $A_{w(i),i} = 1$ for all $i$, if $(i, j)$ is an inversion of $w$ then the entry $A_{w(i), j}$ can be any element of $\FF$, and all other entries of $A$ are equal to $0$.
    
    We define ${}^*w$ to be the set of all NW $w$-augmented matrices and $w_*$ to be the set of all SE $w$-augmented matrices.
\end{definition}

For example, if $w = 25134$, then
\[
{}^*w = \left \{ \begin{bmatrix}
a&b&1&& \\
1& & & & \\
 &c&&1 & \\
 &d& & &1\\
 &1& & &
\end{bmatrix} \colon a, b, c, d \in \FF\right\}
\] and \[
w_* = \left \{ \begin{bmatrix}
 & &1&& \\
1& &a&& \\
 & &&1& \\
 & & & &1\\
 &1&b&c&d\end{bmatrix}\colon a, b, c, d \in \FF\right\}.
\]

In its most concrete form, the Bruhat decomposition is the following statement.
\begin{thm}[{Bruhat decomposition \cite[\S1.10]{stanley}}]
\label{thm:bruhat}
    Let $U$ represent the group of $n \times n$ invertible upper triangular matrices over $\FF$.  Then $\GL_n(\FF)$ is equal to the disjoint union
    \[
    \GL_n(\FF) = \bigsqcup_{w \in \Symm_n} U w_*,
    \]
    where furthermore each matrix in  $U w_*$ 
    has a unique representation as a product 
    $B A$
    with 
    $B \in U$ and $A \in w_*$.
\end{thm}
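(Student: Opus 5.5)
The plan is to prove the decomposition by a form of Gaussian elimination in which only the operations realized by left multiplication by $U$ are allowed. Multiplying $g \in \GL_n(\FF)$ on the left by an invertible upper triangular matrix replaces each row $r$ by a combination of rows $r' \geq r$ with nonzero coefficient on row $r$ itself. So the allowed moves are: scale a row by a nonzero scalar, and add a multiple of a \emph{lower} row to a \emph{higher} row. I will show that every $g$ can be brought to a unique element of some $w_*$ by such moves. I will also show that the permutation $w$ and the resulting element $A$ are uniquely determined. Together these give $g = BA$ with $B \in U$, $A \in w_*$, and both the disjointness and the uniqueness follow.

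\emph{Existence.} I process the columns from left to right. For column $1$, let $w(1)$ be the lowest row with a nonzero entry in column $1$. I scale that row so the entry becomes $1$, then use it to clear every entry above it in column $1$; this is legal since those rows are higher.

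Inductively, suppose columns $1, \ldots, i-1$ have pivots $1$ in rows $w(1), \ldots, w(i-1)$. Each of those columns is zero away from its pivot except for the permitted entries. Among the rows not yet used as pivots, let $w(i)$ be the lowest one with a nonzero entry in column $i$; such a row exists by invertibility. I scale it and use it to clear the entries of column $i$ in all higher rows $r$, with one exception: a pivot row $r = w(h)$ with $h < i$ and $w(h) > w(i)$ lies below row $w(i)$, so I cannot clear its entry. That position is exactly $A_{w(h), i}$ with $(h,i) \in \Inv(w)$, which is allowed to be arbitrary in $w_*$. Entries below $w(i)$ in non-pivot rows are already zero by the choice of $w(i)$ as the lowest such row.

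It remains to check that adding multiples of row $w(i)$ does not spoil columns $1, \ldots, i-1$. Row $w(i)$ is not a pivot row for those columns. Its entries there are therefore zero, except possibly for entries created as free entries, and those only occur in pivot rows. So row $w(i)$ is zero in columns $< i$. After all $n$ columns are processed, the result lies in $w_*$.

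\emph{Uniqueness.} The permutation $w$ is determined by $g$ intrinsically: for each $r$ and $i$, the rank of the submatrix on rows $r, \ldots, n$ and columns $1, \ldots, i$ is unchanged by left multiplication by $U$. These ranks equal $\#\{h \leq i : w(h) \geq r\}$ for the resulting $A \in w_*$, and they recover $w$. This gives disjointness. Now suppose $BA = B'A'$ with $A, A' \in w_*$. Then $A' = CA$ with $C = B'^{-1}B \in U$. I compare column by column, using the fact that the pivot of column $i$ is the lowest nonzero entry of that column among non-earlier-pivot rows. This forces $C$ to act trivially on row $w(i)$ (the diagonal entry is $1$, and the contributions from rows below it vanish). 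It also forces $C$ to have zero coefficient for adding row $w(i)$ into any row where $A$ and $A'$ must both have $0$. An induction on $i$ then shows $C = I$.

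I expect the main obstacle to be the bookkeeping in this uniqueness induction, namely verifying that the free entries of $w_*$ are exactly the ones that the row operations cannot touch. A cleaner alternative, which I would try if the bookkeeping gets messy, is a counting argument over $\FF_q$. By existence the map $U \times \bigsqcup_w w_* \to \GL_n$ is surjective, and its domain has size $\sum_w q^{\binom n2}(q-1)^n q^{|\Inv(w)|}$. This equals $q^{\binom n2}(q-1)^n [n]!_q = |\GL_n(\FF_q)|$, so the map is a bijection. For a general field I would still need the direct argument.
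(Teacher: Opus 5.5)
Your proposal is correct and is essentially the standard argument behind the cited result: the paper gives no proof of its own, citing Stanley instead, and Remark~\ref{rem:equivalent bruhat} describes this decomposition as Gaussian elimination with pivots chosen left to right and eliminated upward, which is exactly your existence algorithm, while your rank invariants of the lower-left submatrices correctly give disjointness. Your uniqueness induction is sound, and it is cleanest with the inductive claim $Ce_{w(h)} = e_{w(h)}$ for $h<i$: comparing column $i$ of $A' = CA$ then gives $Ce_{w(i)} = e_{w(i)} + \sum_{h<i,\ w(h)>w(i)} (A'_{w(h),i} - A_{w(h),i})\, e_{w(h)}$, and since $C$ is upper triangular the sum must vanish, so $Ce_{w(i)} = e_{w(i)}$.
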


\begin{remark}\label{rem:equivalent bruhat}
 There are seven other equivalent versions of Theorem~\ref{thm:bruhat}: writing $L$ for the group of $n \times n$ invertible lower triangular matrices and using the obvious notions and notations $w^*$ and ${}_*w$ for SW and NE augmented matrices, we can replace $U w_*$ in the theorem with any of $w_* L$, ${}^*w U$, $L {}^*w$, $w^* L$, $L w^*$, ${}_*w U$, or $U {}_*w$.  One may show the equivalence of these decompositions by various combinations of the operations of transpose and multiplication on the left or right by the antidiagonal permutation matrix.  These eight different decompositions correspond to eight different orders in which one could do the relevant version of Gaussian elimination: choosing pivots from left to right and eliminating up, or choosing pivots from top to bottom and eliminating left, and so on.
\end{remark}

\begin{remark}\label{remark:bruhat}

In the case that $\FF = \FF_q$ is a finite field, Theorem~\ref{thm:bruhat} provides a direct connection between the rook placements and invertible matrices in Haglund's theorem \cite[Thm.~1]{haglund} (Equation~\eqref{eq:Haglund} in the introduction): it's easy to see that for any $A \in w_*$ and $B \in U$ the matrix $BA$ respects a partition $\lambda$ if and only if $w$ respects $\lambda$.  Letting $S_{\lambda}$ be the set of permutations $w \in \Symm_n$ that respect $\lambda$, it follows the number of invertible matrices in $\GL_n(\FF_q)$ that respect $\lambda$ is
    \[
    \sum_{w \in S_{\lambda}}|U w_*| = \sum_{w \in S_{\lambda}} |U|\cdot |w_*| = (q-1)^nq^{\binom{n}{2}} \sum_{w \in S_{\lambda}}q^{|\Inv(w)|}.
    \]
\end{remark}

Our approach to prove Theorem~\ref{thm:weak form of conjecture} is to mimic the argument of Remark~\ref{remark:bruhat} in the higher-dimensional setting.  Towards this end, we make the following definition.

\begin{definition}\label{def:augHM}
Given permutations $\sigma \in \Symm_{k + 1}$ and $\pi \in \Symm_k$, a {\bf $(\sigma, \pi)$-augmented hypermatrix} is a $2 \times (k + 1) \times k$ hypermatrix of the form 
\[
(A, A') \circ E
\]
where $A \in \sigma_*$ and $A' \in {}^*(\pi\inv)$.
\end{definition}

By the definition of the action of $\GL_{k + 1} \times \GL_k$, the formula in the preceding definition can be written even more concretely as
\[
(A, A') \circ E = \left(A \begin{bmatrix}
    1 & 0 & \cdots & 0 \\
    0 & 1 & \cdots & 0 \\
    \vdots & \vdots & \ddots & \vdots \\
    0 & 0 & \cdots & 1 \\
    0 & 0 & \cdots & 0
\end{bmatrix} (A')\transpose, \quad
A \begin{bmatrix}
    0 & 0 & \cdots & 0 \\
    1 & 0 & \cdots & 0 \\
    0 & 1 & \cdots & 0 \\
    \vdots & \vdots & \ddots & \vdots \\
    0 & 0 & \cdots & 1
\end{bmatrix} (A')\transpose\right),
\]  where the matrix $(A')\transpose$ being multiplied on the right belongs to $^*\pi$.

The next three results extend the first part of Remark~\ref{remark:bruhat} to our setting, showing how to reduce the problem of enumerating nondegenerate hypermatrices respecting a plane partition $P$ to the problem of enumerating augmented hypermatrices respecting $P$.

\begin{prop}\label{prop:pull off U}
Every nondegenerate $2 \times (k + 1) \times k$ hypermatrix over $\FF_q$ can be written in exactly $q - 1$ ways as a composition of the form
    \[
    (B, B') \circ H    
    \]
    where $B \in U \subset \GL_{k+1}(\FF_q)$ and $B' \in L \subset \GL_{k}(\FF_q)$ are triangular and $H$ is a $(\sigma, \pi)$-augmented hypermatrix for some permutations $\sigma \in \Symm_{k + 1}, \pi \in \Symm_k$.
\end{prop}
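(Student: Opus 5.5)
Combine Aitken's free transitive action (Theorem~\ref{thm:aitken}) with two Bruhat decompositions, one on each factor, and then count the leftover scalar ambiguity. By Theorem~\ref{thm:aitken}, every nondegenerate hypermatrix $M$ can be written as $M = (g_1, g_2)\circ E$ with $(g_1,g_2) \in \GL_{k+1}(\FF_q)\times \GL_k(\FF_q)$. The pair is determined exactly up to the kernel $N = \{(cI_{k+1}, c\inv I_k)\}$, so $M$ has precisely $q-1$ such representations.

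Next, decompose each factor separately.
\begin{itemize}
\item By Theorem~\ref{thm:bruhat}, write $g_1 = BA$ uniquely with $B \in U$ and $A \in \sigma_*$ for a unique $\sigma \in \Symm_{k+1}$.
\item For $g_2$, use the variant $L\,{}^*w$ from Remark~\ref{rem:equivalent bruhat}. Write $g_2 = B'A'$ uniquely with $B' \in L$ and $A' \in {}^*w$ for a unique $w \in \Symm_k$, and set $\pi = w\inv$ so that $A' \in {}^*(\pi\inv)$.
\end{itemize}
Since the action is a group action, $(g_1,g_2)\circ E = (B,B')\circ\big((A,A')\circ E\big)$, and $H = (A,A')\circ E$ is a $(\sigma,\pi)$-augmented hypermatrix by Definition~\ref{def:augHM}. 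This gives the map
\[(g_1,g_2) \longmapsto (B, B', H),\]
and its inverse is $(B,B',A,A') \mapsto (BA, B'A')$. The Bruhat decompositions are disjoint unions with unique factorizations, so this map from pairs $(g_1,g_2)$ to quadruples $(B,B',\sigma\text{-data } A, \pi\text{-data } A')$ is a bijection.

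The main obstacle is showing that distinct quadruples yield distinct \emph{expressions} $(B,B')\circ H$. Two different quadruples $(B,B',A,A')$ could in principle produce the same triple $(B,B',H)$ if the map $(A,A') \mapsto (A,A')\circ E$ were not injective. It is injective, by freeness in Theorem~\ref{thm:aitken}, provided no nontrivial element of $N$ relates two augmented pairs. So I would argue that $(A,A')\circ E = (\tilde A,\tilde A')\circ E$ forces $\tilde A = cA$ and $\tilde A' = c\inv A'$. Both matrices have entry $1$ at the positions $(\sigma(i),i)$, respectively $(\pi\inv(i),i)$, so $c = 1$.

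Consequently the expressions $(B,B')\circ H$ with $B\in U$, $B'\in L$, and $H$ augmented are in bijection with pairs $(g_1,g_2)$ satisfying $(g_1,g_2)\circ E = M$. There are exactly $q-1$ such pairs, hence $M$ has exactly $q-1$ expressions of the stated form. One could also see this directly: the scalar ambiguity $(cI, c\inv I)$ is absorbed into $(B,B')$, since scalars lie in both $U$ and $L$, while $H$ stays fixed.

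Finally, existence follows because every pair $(g_1,g_2)$ decomposes as above. Every expression of this form is in fact nondegenerate, since $BA$ and $B'A'$ are invertible and $E$ is nondegenerate, so the count is exactly over nondegenerate hypermatrices.
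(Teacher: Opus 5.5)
Your proposal is correct and follows essentially the same route as the paper: Aitken's free transitive action gives exactly $q-1$ pairs $(M,N)$ with $(M,N)\circ E$ equal to the given hypermatrix, and the Bruhat factorizations $M = BA$ with $A \in \sigma_*$ and $N = B'A'$ with $A' \in {}^*(\pi^{-1})$ (the $L\,{}^*w$ variant) turn each such pair into exactly one composition $(B,B')\circ H$. Your extra check that $(A,A') \mapsto (A,A')\circ E$ is injective on augmented pairs is left implicit in the paper, and it is what makes the pair-to-composition correspondence a genuine bijection. One small tightening: if a priori $\tilde A \in \tilde\sigma_*$ for some other $\tilde\sigma$, then $c = 1$ (and $\tilde\sigma = \sigma$) follows from uniqueness of the Bruhat factorization $cA = (cI)\,A = I\,\tilde A$.
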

\begin{proof}
By Theorem~\ref{thm:aitken}, each nondegenerate hypermatrix of format $2 \times (k + 1) \times k$ can be written in exactly $q - 1$ ways as a composition $(M,N) \circ E$ where $(M,N) \in \GL_{k+1}(\FF_q) \times \GL_k(\FF_q)$. By Theorem~\ref{thm:bruhat} and Remark~\ref{rem:equivalent bruhat}, there is exactly one choice of permutations $\sigma \in \Symm_{k + 1}$ and $\pi \in \Symm_k$ and matrices $A, A', B, B'$ such that $A \in \sigma_*$, $A' \in {}^*(\pi\inv)$,  $B \in \GL_{k + 1}(\FF_q)$ is upper-triangular, and $B' \in \GL_k(\FF_q)$ is lower-triangular such that $M = BA$ and $N = B'A'$. Since this is a group action, 
    \[ 
    (M,N) \circ E = (BA, B'A') \circ E = \left( (B, B')\cdot (A, A')\right) \circ E = (B, B') \circ \left( (A, A') \circ E \right),
    \]
with $(A, A') \circ E$ the defining form of a $(\sigma, \pi)$-augmented hypermatrix.  Thus each pair $(M, N)$ corresponds to exactly one composition of the desired kind.  This completes the proof.
\end{proof}

\begin{prop}\label{prop:upper triangular}
Let $P$ be a plane partition inside the box of format $2 \times (k + 1) \times k$, let $H$ be a hypermatrix over a field $\FF$, and let $B \in U \subset \GL_{k + 1}(\FF)$ and $B' \in L \subset \GL_k(\FF)$ be triangular.  Then $H$ respects $P$ if and only if $(B, B') \circ H$ does.
\end{prop}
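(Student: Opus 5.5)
The plan is to reduce to a single direction of the implication, and then verify that direction entry by entry. Since $U$ and $L$ are groups, $(B\inv, (B')\inv)$ is again a pair of an invertible upper-triangular and an invertible lower-triangular matrix. Because the action is a group action, $H = (B\inv,(B')\inv)\circ\big((B,B')\circ H\big)$. So it suffices to prove one statement: if $H$ respects $P$, then $(B,B')\circ H$ respects $P$ for every $B\in U$, $B'\in L$. Applying that statement to $(B,B')\circ H$ and the inverse pair gives the converse.

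For the forward direction, write $H=(H_1,H_2)$ as a pair of $(k+1)\times k$ slices in the first direction. As computed for augmented hypermatrices just after Definition~\ref{def:augHM}, the action is $(B,B')\circ H = (BH_1(B')\transpose,\; BH_2(B')\transpose)$. Here $(B')\transpose$ is upper triangular. The key observation is then purely about a single slice. If an $m\times n$ matrix $X$ respects an integer partition $\nu$, and $B$ is $m\times m$ upper triangular and $C$ is $n\times n$ upper triangular, then $BXC$ respects $\nu$. Applying this with $X=H_1$, $\nu=\lambda$ and with $X=H_2$, $\nu=\mu$ finishes the proof; note that $P$ being a plane partition is not even needed beyond each layer being a partition.

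To prove the single-slice claim, I split it into left and right multiplication.

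Left multiplication by upper triangular $B$: row $i$ of $BX$ is a combination of rows $i'\ge i$ of $X$. If $i>m-\nu_j$, then every $i'\ge i$ also satisfies $i'>m-\nu_j$, so all the relevant entries $X_{i',j}$ vanish and $(BX)_{i,j}=0$. In words, the zero region in each column is closed downward.

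Right multiplication by upper triangular $C$: column $j$ of $XC$ is a combination of columns $j'\le j$ of $X$. Since $\nu$ is weakly decreasing, $\nu_{j'}\ge\nu_j$ for $j'\le j$. So if $i>m-\nu_j$, then $i>m-\nu_{j'}$, giving $X_{i,j'}=0$ for all such $j'$ and hence $(XC)_{i,j}=0$. In words, the zero region is closed leftward.

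The only real care point is the bookkeeping of which triangularity is needed on which side. The action uses $(B')\transpose$, so the lower-triangularity of $B'$ becomes upper-triangularity on the right. That must match the partition's zeros lying in the bottom-left, which is consistent with the paper's conventions. Everything else is routine index checking.
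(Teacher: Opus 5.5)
Your proof is correct and is essentially the paper's argument: both work face by face with $BH_i(B')^{\mathrm{T}}$. Left multiplication by an upper-triangular matrix keeps the partition-shaped zero region closed downward in each column, and right multiplication by the upper-triangular $(B')^{\mathrm{T}}$ keeps it closed leftward in each row, using that the partition is weakly decreasing. Your reduction of the converse to the forward direction via the inverse pair $(B^{-1},(B')^{-1})\in U\times L$ makes explicit a step the paper only leaves implicit, since the paper argues only the forward implication.
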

\begin{proof}
When multiplying any matrix $X$ by an upper-triangular matrix $B$ on the left, the nonzero entries in the product $BX$ all lie at positions that are nonzero in $X$, or directly above such positions (in the same column).  Likewise, when multiplying any matrix $X$ by an upper-triangular matrix $(B')\transpose$ on the right, the nonzero entries in $X(B')\transpose$ all lie at positions that are nonzero in $X$, or directly to the right of such positions (in the same row).  It follows immediately that if $X$ respects the partition $\lambda$ and $B, (B')\transpose$ are upper-triangular, then $BX(B')\transpose$ also respects $\lambda$.  Now the result follows by considering $X$ to be in turn the two faces of $H$.
\end{proof}

\begin{cor}\label{cor:count}
Let $P$ be a plane partition inside the box of format $2 \times (k + 1) \times k$.  The number of nondegenerate hypermatrices over $\FF_q$ that respect $P$ is
\[
q^{k^2}(q - 1)^{2k}
\sum_{(\sigma, \pi) \in \Symm_{k + 1} \times \Symm_k}
\# \{ (\sigma, \pi)\text{-augmented hypermatrices that respect } P \}.
\]
\end{cor}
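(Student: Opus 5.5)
The count will follow directly from Proposition~\ref{prop:pull off U} and Proposition~\ref{prop:upper triangular}, together with the orders of the triangular groups. Write $\mathcal{N}_P$ for the set of nondegenerate hypermatrices over $\FF_q$ respecting $P$. Consider the set $\mathcal{T}$ of triples $(B, B', H)$ with $B \in U \subset \GL_{k+1}(\FF_q)$, $B' \in L \subset \GL_k(\FF_q)$, and $H$ a $(\sigma,\pi)$-augmented hypermatrix for some $(\sigma,\pi) \in \Symm_{k+1}\times\Symm_k$. Let $\Phi(B,B',H) = (B,B')\circ H$. Each augmented hypermatrix is $(A,A')\circ E$ with $(A,A')$ invertible; indeed the augmented matrices have a permutation pattern of $1$'s with free entries only in ``inversion'' positions, so they have determinant $\pm 1$. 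Hence $\Phi$ lands in the set of nondegenerate hypermatrices, by Proposition~\ref{prop:invariant} and Lemma~\ref{lem:Enondegenerate}. By Proposition~\ref{prop:pull off U}, every nondegenerate hypermatrix has exactly $q-1$ preimages under $\Phi$.

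Next I restrict to $P$. By Proposition~\ref{prop:upper triangular}, $(B,B')\circ H$ respects $P$ if and only if $H$ does. So $\Phi^{-1}(\mathcal{N}_P)$ is exactly the set of triples $(B,B',H)$ in which $H$ respects $P$, with $B, B'$ arbitrary. Therefore
\[
(q-1)\,|\mathcal{N}_P| = |U|\cdot|L|\cdot \sum_{(\sigma,\pi)} \#\{(\sigma,\pi)\text{-augmented hypermatrices respecting } P\}.
\]
There is one point to check here: the union over $(\sigma,\pi)$ should be counted as a multiset of pairs, not of hypermatrices. The cleanest way to handle it is to index triples by $(B,B',\sigma,\pi,A,A')$ rather than by $H$. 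This is legitimate because, by Theorem~\ref{thm:aitken}, $(A,A')\mapsto (A,A')\circ E$ is injective up to the scalar subgroup $N$. The only elements of $N$ with both coordinates augmented are the identity, since augmented matrices have $1$'s in a permutation pattern and so $c=1$. So distinct data $(\sigma,\pi,A,A')$ give distinct $H$. Proposition~\ref{prop:pull off U}'s ``exactly $q-1$ ways'' is stated in exactly these terms, so the count is consistent.

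Finally, I compute $|U| = (q-1)^{k+1}q^{\binom{k+1}{2}}$ and $|L| = (q-1)^{k}q^{\binom{k}{2}}$. Their product is $(q-1)^{2k+1}q^{k^2}$, because $\binom{k+1}{2}+\binom{k}{2}=k^2$. Dividing by $q-1$ yields the stated formula. I expect the main (mild) obstacle to be the bookkeeping in the middle step: making sure that the counting of augmented hypermatrices ``for each $(\sigma,\pi)$'' matches the parametrization used in Proposition~\ref{prop:pull off U}, with no overcounting of the same $H$ across different $(\sigma,\pi)$.
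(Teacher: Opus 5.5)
Your argument is correct and is essentially the paper's proof: combine Proposition~\ref{prop:pull off U} with Proposition~\ref{prop:upper triangular}, count $|U|\cdot|L| = q^{k^2}(q-1)^{2k+1}$, and divide by $q-1$. Your extra checks only make explicit details the paper leaves implicit, namely that every augmented hypermatrix is nondegenerate and that distinct data $(\sigma,\pi,A,A')$ give distinct $H$; the cleanest justification for $c=1$ in the latter is the uniqueness of the factorization $cA = (cI)\cdot A$ in Theorem~\ref{thm:bruhat}.
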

\begin{proof}
By Proposition~\ref{prop:pull off U}, each nondegenerate hypermatrix can be written in exactly $q - 1$ ways as $(B, B') \circ H$ where $H$ is a $(\sigma, \pi)$-augmented hypermatrix, for some $\sigma \in \Symm_{k + 1}$ and $\pi \in \Symm_k$.  Furthermore, by Proposition~\ref{prop:upper triangular}, the original hypermatrix respects $P$ if and only if $H$ does.  The number of choices for $B, B'$ is $q^{\binom{k + 1}{2}}(q - 1)^{k + 1} \cdot q^{\binom{k}{2}}(q - 1)^k = q^{k^2} (q - 1)^{2k + 1}$.  Thus, there is a $(q - 1)$-to-$q^{k^2} (q - 1)^{2k + 1}$ correspondence between the set of nondegenerate hypermatrices that respect $P$ and the collection (over all $\sigma$ and $\pi$) of $(\sigma, \pi)$-augmented hypermatrices that respect $P$.  The result follows immediately.
\end{proof}

The next result identifies precisely which pairs $(\sigma, \pi)$ contribute to the sum in Corollary~\ref{cor:count}. 

\begin{prop}\label{prop:1s in augmented hypermatrix}
For any plane partition $P$ inside the box of format $2 \times (k + 1) \times k$, there exist $(\sigma, \pi)$-augmented hypermatrices that respect $P$ if and only if the hyperrook placement $(m_\sigma, m_{\pi\inv})\circ E$ respects $P$.
\end{prop}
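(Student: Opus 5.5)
The "if" direction is immediate, and the work is in the "only if" direction, where I will show that the 1s of the hyperrook placement always survive in an augmented hypermatrix and cannot be cancelled by the free parameters.

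For "if", take $A = m_\sigma \in \sigma_*$ (all free entries zero) and $A' = m_{\pi\inv} \in {}^*(\pi\inv)$. Then $(A,A')\circ E = (m_\sigma, m_{\pi\inv})\circ E$ is itself a $(\sigma,\pi)$-augmented hypermatrix, and it respects $P$ by hypothesis.

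For "only if", suppose $H = (A,A')\circ E$ respects $P$, with $A\in\sigma_*$ and $A'\in{}^*(\pi\inv)$. By Lemma~\ref{lem:hyperrookWC}, the hyperrook placement has its 1s at positions $(1,w(j),j)$ and $(2,wc(j),j)$. I will show that $H$ is nonzero at each of these positions. Since $H$ respects $P$, this forces $w(j)\le k+1-\lambda_j$ and $wc(j)\le k+1-\mu_j$, and Corollary~\ref{lem:wc restrictions} then finishes the argument.

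To show these entries are nonzero, I will use the concrete formula for the faces: $H_1 = A D_1 (A')\transpose$ and $H_2 = A D_2 (A')\transpose$, where $D_1, D_2$ are the two $(k+1)\times k$ shift matrices. The natural approach is triangularity relative to the permutations.

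\begin{itemize}
\item Write $A = m_\sigma U_0$, where $U_0 = m_\sigma\inv A$ is unipotent upper triangular. This works because a SE $\sigma$-augmented matrix has its free entries in row $\sigma(i)$ and columns $j>i$. So $A$ equals $m_\sigma$ times a unipotent upper-triangular matrix.
\item Similarly, $(A')\transpose \in {}^*\pi$ factors as $L_0\, m_{\pi\inv}\transpose$ with $L_0$ unipotent. I need to check which triangularity ${}^*\pi$ gives, and pick the side of the factorization so that $L_0$ is lower triangular.
\item Then $H_1 = m_\sigma (U_0 D_1 L_0) m_{\pi\inv}\transpose$, and likewise for $H_2$. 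The desired entries of $H$ correspond exactly to the entries $(i,i)$ of $U_0D_1L_0$ and $(i+1,i)$ of $U_0D_2L_0$.
\item The product $U_0 D_1 L_0$ has the same diagonal as $D_1$ only if the shapes cooperate: $U_0$ is upper unitriangular and $L_0$ is lower unitriangular. So the $(i,i)$ entry equals $\sum_{r\ge i,\, s\ge i} (U_0)_{i r}(D_1)_{rs}(L_0)_{si}$, which has extra terms $r=s>i$.
\end{itemize}

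This last point is the main obstacle: the diagonal entry is not simply 1 in general, so the naive argument fails. Instead of tracking a single entry, I will use the structure of $P$. Since $H$ respects $P$, so does $m_\sigma\inv H\,(m_{\pi\inv}\transpose)\inv$ in a permuted sense, so this does not directly help.

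A better route is to argue by columns and rows extremally. Respecting a partition is a condition on the lowest nonzero entry in each column, or on a staircase of vanishing entries. In $H_1$, column $j$ is $A$ applied to column $j$ of $D_1(A')\transpose$. That column is a combination of $e_{\pi(\ast)}$ with leading term $e_{\pi(j)}$, plus terms $e_{\pi(i)}$ for inversions, which under the NW convention lie toward smaller indices. Applying $A\in\sigma_*$ sends $e_m$ to $e_{\sigma(m)}$ plus terms in the rows $\sigma(i)$ with $i<m$ and $\sigma(i)>\sigma(m)$, which sit lower.

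So I will aim to prove the following: the nonzero pattern of $H$ always contains the hyperrook position $w(j)$, together possibly with positions higher in a suitable order, but never makes the lowest required entry vanish. This is a leading-term argument in which I order rows by the bottom-most nonzero entry. I will verify this carefully for both faces, using $\ol{\pi}(j)+1$ for the second face exactly as in Lemma~\ref{lem:hyperrookWC}. I expect the bookkeeping of which convention, NW or SE, makes the leading terms lie in the correct direction to be the delicate part.
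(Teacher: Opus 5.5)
Your reduction is sound and matches the paper's. The ``if'' direction goes via $m_\sigma\in\sigma_*$ and $m_{\pi^{-1}}\in{}^*(\pi^{-1})$. For ``only if'' it suffices to show that every $(\sigma,\pi)$-augmented $H$ is nonzero at $(1,w(j),j)$ and $(2,wc(j),j)$, and then to invoke Lemma~\ref{lem:hyperrookWC}. But that central step is never proved. You abandon the factorization argument because of a miscomputed triangularity. The ``leading-term argument'' that replaces it is only announced, and its order is left unspecified. If ``bottom-most'' refers to physical row position, the claim is false: an augmented hypermatrix can be nonzero strictly below the hyperrook $1$ in the same column. For instance, in Example~\ref{ex:respect}, column $1$ of the front face has the $1$ in row $w(1)=2$ and the entry $x_{1,2}+y_{1,3}$ in row $5$; this is exactly the phenomenon of Proposition~\ref{prop:potentially bad entries in front face}. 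The order that works is by $\sigma^{-1}$ of the row index. In column $j$ of the front face, every nonzero entry lies in a row $\sigma(p)$ with $p\le\pi(j)$, and the entry with $p=\pi(j)$ is exactly $1$.

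The miscomputation is that $L_0$ is not lower triangular. Column $i$ of $N=(A')^{T}\in{}^*\pi$ has its $1$ in row $\pi(i)$ and its free entries strictly above it, in rows $\pi(j)<\pi(i)$ for $(i,j)\in\operatorname{Inv}(\pi)$. Column $c$ of $L_0=Nm_\pi^{-1}$ (note $m_{\pi^{-1}}^{T}=m_\pi$) is column $\pi^{-1}(c)$ of $N$. So $L_0$ is \emph{upper} unitriangular, like $U_0$. With that correction your first idea finishes the proof at once. The sum $(U_0D_1L_0)_{i,i}=\sum_r (U_0)_{i,r}(L_0)_{r,i}$ forces $i\le r\le i$, so it equals $1$. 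Likewise $(U_0D_2L_0)_{i+1,i}=\sum_s (U_0)_{i+1,s+1}(L_0)_{s,i}$ forces $s=i$, so it equals $1$. Now $(m_\sigma X m_\pi)_{a,b}=X_{\sigma^{-1}(a),\pi(b)}$, and $\sigma^{-1}wc(j)=\overline{\pi}c(j)=\pi(j)+1$ for $j\le k$. Hence $H_{1,w(j),j}=H_{2,wc(j),j}=1$ for every augmented $H$, not merely nonzero. This is precisely the paper's proof, phrased there entrywise: $H_{1,w(j),j}=\sum_\ell A_{w(j),\ell}N_{\ell,j}$. Row $w(j)$ of $A$ vanishes to the left of column $\pi(j)$, and column $j$ of $N$ vanishes below row $\pi(j)$, leaving only the term $1\cdot 1$. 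A minor point: cite Lemma~\ref{lem:hyperrookWC} directly rather than Corollary~\ref{lem:wc restrictions}. That corollary is stated only for $P\subseteq\Delta$, whereas the proposition allows any $P$ in the box.
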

\begin{proof}
Since $m_\sigma \in \sigma_*$ and $m_{\pi\inv} \in {}^*(\pi\inv)$, if the hyperrook placement $(m_\sigma, m_{\pi\inv})\circ E$ respects $P$ then there is at least one $(\sigma, \pi)$-augmented hypermatrix that respects $P$.  

Conversely, suppose that there is a $(\sigma, \pi)$-augmented hypermatrix $H = (A, (A')\transpose) \circ E$ that respects $P$ (so $A \in \sigma_*$ and $A' \in {}^*\pi$). Recall from  Definition~\ref{wc} the notations $w = \sigma \ol{\pi}$ and $c = \ol{\pi}\inv(1 \ 2 \ \cdots \ k \ k+1) \ol{\pi}$. By definition, we have for each $j \in [k]$ that
\[
H_{1, w(j), j} = \sum_{\ell = 1}^k A_{w(j), \ell} A'_{\ell, j}.
\]
By the definition of $\sigma_*$ and $w$, we have that
$1 = A_{w(j), \sigma\inv w(j)} = A_{w(j), \ol{\pi}(j)} = A_{w(j), \pi(j)}$ is the unique $1$ in row $w(j)$ of $A$ and that $A_{w(j), \ell} = 0$ for $\ell < \pi(j)$.  Similarly, by the definition of $^*\pi$, $A'_{\pi(j), j} = 1$ is the unique $1$ in column $j$ of $A'$ and $A'_{\ell, j} = 0$ for $\ell > \pi(j)$.  Thus $H_{1, w(j), j} = 1$.  Since $H$ respects $P$, the position $(1, w(j), j)$ must not be contained in $P$.  Similarly, we have by definition that for each $j \in [k]$,
\[
H_{2, wc(j), j} = \sum_{\ell = 2}^{k + 1} A_{wc(j), \ell} A'_{\ell - 1, j}.
\]
By the definition of $\sigma_*$, $A_{wc(j), \ol{\pi}c(j)} = A_{wc(j), \sigma\inv wc(j)} = 1$ and $A_{wc(j), \ell} = 0$ if $\ell < \ol{\pi}c(j)$.  Similarly, by the definition of ${}^*\pi$, $A'_{\ol{\pi}(j), j} = 1$ and $A'_{\ell - 1, j} = 0$ if $\ell - 1 > \ol{\pi}(j)$.  Furthermore, since $j \leq k$, we have $\ol{\pi}c (j) = (1 \ 2 \ \cdots \ k + 1) \ol{\pi}(j) = \pi(j) + 1$, and so the previous condition $\ell - 1 > \ol{\pi}(j)$ can be rewritten as $\ell > \ol{\pi}c(j)$.  Thus $H_{2, wc(j), j} = 1$, and since $H$ respects $P$, the position $(2, wc(j), j)$ must not be contained in $P$.  Finally, since $P$ does not contain any of the entries $(1, w(j), j)$ or $(2, wc(j), j)$ for $1 \leq j \leq k$,  it follows from Lemma~\ref{lem:hyperrookWC} that the hyperrook placement $(m_\sigma, m_{\pi\inv})\circ E$ respects $P$.
\end{proof}

To complete the proof of Theorem~\ref{thm:weak form of conjecture}, it suffices to show that the number of $(\sigma, \pi)$-augmented hypermatrices that respect $P$ is a power of $q$ when the hyperrook placement $(m_\sigma, m_{\pi\inv}) \circ E$ respects $P$.  In comparison with the argument in the matrix case (see Remark~\ref{remark:bruhat}), this is significantly complicated by the fact that a generic $(\sigma, \pi)$-augmented hypermatrix may not respect a plane partition $P$, even when the associated hyperrook placement does.

\begin{example}\label{ex:respect}
Consider the case with $k = 4$, $\sigma = 52134$, and $\pi = 2314$.  In this case, the generic element of $\sigma_*$ is
\[
\begin{bmatrix}
    0 & 0 & 1 & 0 & 0 \\
    0 & 1 & x_{2, 3} & 0 & 0 \\
    0 & 0 & 0 & 1 & 0 \\
    0 & 0 & 0 & 0 & 1 \\
    1 & x_{1, 2} & x_{1, 3} & x_{1, 4} & x_{1, 5}     
\end{bmatrix},
\]
the generic element of ${}^*\pi$ is
\[
\begin{bmatrix}
    y_{1,3} & y_{2,3} & 1 & 0 \\
    1 & 0 & 0 & 0 \\
    0 & 1 & 0 & 0 \\
    0 & 0 & 0 & 1      
\end{bmatrix},
\]
and so a generic $(\sigma, \pi)$-augmented hypermatrix is of the form
\[
\left( 
    \begin{bmatrix}
        0 & 1 & 0 & 0 \\
        1 & x_{2, 3} & 0 & 0 \\
        0 & 0 & 0 & 1 \\
        0 & 0 & 0 & 0 \\
        x_{1,2} + y_{1,3} & x_{1,3} + y_{2,3} & 1 & x_{1,4}
    \end{bmatrix}, \quad 
    \begin{bmatrix}
        1 & 0 & 0 & 0 \\
        x_{2,3}+y_{1,3} & y_{2,3} & 1 & 0 \\
        0 & 1 & 0 & 0 \\
        0 & 0 & 0 & 1 \\
        x_{1,3}+x_{1,2}y_{1,3} & x_{1,4}+x_{1,2}y_{2,3} & x_{1,2} & x_{1,5}
    \end{bmatrix}\right).
\]
Although the hyperrook placement $(m_\sigma, m_{\pi\inv}) \circ E$ respects the plane partition $P = \bigl(\langle 3, 2\rangle, \langle 2, 2\rangle\bigr)$, 
the generic $(\sigma, \pi)$-augmented hypermatrix does not.
\end{example}

Example~\ref{ex:respect} shows that the enumeration of augmented hypermatrices respecting a plane partition may be nontrivial.  

\begin{example}\label{eg:solve}
Continue the choices of Example~\ref{ex:respect}.  It follows from the preceding computation that the number of $(\sigma, \pi)$-augmented hypermatrices over $\FF_q$ that respect $P$ is the number of solutions over $\FF_q$ to the system of algebraic equations
\[
\left\{
\begin{aligned}
    x_{1,2} + y_{1,3} & = 0 \\
    x_{1,3} + y_{2,3} & = 0 \\
    x_{1,2}y_{1,3}+x_{1,3} & = 0 \\
    x_{1,2}y_{2,3}+x_{1,4} & = 0
\end{aligned}\right.
\]
in the $|\Inv(\sigma)|+|\Inv(\pi)| = 7$ variables $x_{1,2}, \ldots, y_{2,3}$.  We may solve this system as follows: first, choose the values of $x_{2, 3}$, $x_{1, 5}$, and $x_{1, 2}$ freely ($q^3$ choices).  The first equation in the system determines the value of $y_{1,3}$ uniquely.  Then the third equation determines the value of $x_{1, 3}$ uniquely.  Then the second equation determines the value of $y_{2,3}$ uniquely, and the last equation determines the value of $x_{1, 4}$ uniquely.  So in total there are $q^3$ $(\sigma, \pi)$-augmented hypermatrices over $\FF_q$ in this case.
\end{example}

The final stage in the proof of Theorem~\ref{thm:weak form of conjecture} is to show that the preceding argument can be carried out in general: that is, for each plane partition $P$ and each hyperrook placement $(m_\sigma, m_{\pi\inv})\circ E$ that respects $P$, there exists a special subset of the free entries of $\sigma_*$ and ${}^*\pi$ such that for any choice of values for the elements of this subset, there is a unique $(\sigma, \pi)$-augmented hypermatrix with the special subset taking these values that respects $P$. In order to implement this approach, we use the following labeling scheme for the free entries of an augmented matrix, which is already illustrated above in Example~\ref{ex:respect}.

\begin{definition}\label{def:variables}
Given a permutation $\sigma \in \Symm_{k + 1}$, for each inversion $(i, j)$ of $\sigma$, we write $x_{i, j}$ for the corresponding entry in position $(\sigma(i), j)$ of a generic matrix in $\sigma_*$.  Likewise, given a permutation $\pi \in \Symm_k$, for each inversion $(i, j)$ of $\pi$, we write $y_{i, j}$ for the entry in the corresponding position $(\pi(j), i)$ of a generic matrix in ${}^*\pi$.
\end{definition}

Our next result identifies those positions in the front face of a $(\sigma, \pi)$-augmented hypermatrix that are not identically zero and lie below and to the left of the $1$s in the associated rook placement.  These are precisely the entries that could prevent a $(\sigma, \pi)$-augmented hypermatrix from respecting a plane partition $P$ that is respected by the hyperrook placement $(m_\sigma, m_{\pi\inv})\circ E$.  The statement is illustrated in Figure~\ref{fig:complicated}(a).

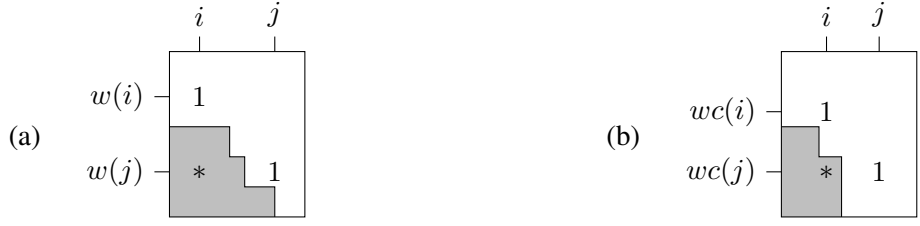
\begin{figure}
    \centering

    \raisebox{2.5em}{(a)}
    \quad 
  \begin{tikzpicture}[scale=.2]
    \fill[lightgray] (0, 0) -- (0, 6) -- (4, 6) -- (4, 4) -- (5, 4) -- (5, 2) -- (7, 2) -- (7, 0) -- cycle;
    \draw (0, 6) -- (4, 6) -- (4, 4) -- (5, 4) -- (5, 2) -- (7, 2) -- (7, 0);
    \draw (0, 0) -- (9, 0) -- (9, 11) -- (0, 11) -- cycle;
    \draw (2, 11) -- (2, 12)
        (7, 11) -- (7, 12)
        (-1, 3) -- (0, 3)
        (-1, 8) -- (0, 8);
    \node[above] at (2, 12) {$i$};
    \node[above] at (7, 12) {$j$};
    \node[left] at (-1, 8) {$w(i)$};
    \node[left] at (-1, 3) {$w(j)$};
    \node at (2, 8) {$1$};
    \node at (7, 3) {$1$};
    \node at (2, 3) {$*$};
    \end{tikzpicture}
    \hspace{1.5in}
    \raisebox{2.5em}{(b)}
    \quad 
    \begin{tikzpicture}[scale=.2]
    \fill[lightgray] (0, 0) -- (0, 6) -- (2.5, 6) -- (2.5, 4) -- (4, 4) -- (4, 0) -- cycle;
    \draw (0, 6) -- (2.5, 6) -- (2.5, 4) -- (4, 4) -- (4, 0);
    \draw (0, 0) -- (9, 0) -- (9, 11) -- (0, 11) -- cycle;
    \draw (3, 11) -- (3, 12)
        (6.5, 11) -- (6.5, 12)
        (-1, 3) -- (0, 3)
        (-1, 7) -- (0, 7);
    \node[above] at (3, 12) {$i$};
    \node[above] at (6.5, 12) {$j$};
    \node[left] at (-1, 7) {$wc(i)$};
    \node[left] at (-1, 3) {$wc(j)$};
    \node at (3, 7) {$1$};
    \node at (6.5, 3) {$1$};
    \node at (3, 3) {$*$};
    \end{tikzpicture}
    \caption{Left: the situation described in Proposition~\ref{prop:potentially bad entries in front face}, with a potential $\lambda$ in gray.  Right: the situation described in Proposition~\ref{prop:potentially bad entries in back face}, with a potential $\mu$ in gray.}
    \label{fig:complicated}
\end{figure}

\begin{prop}\label{prop:potentially bad entries in front face}
Fix permutations $(\sigma, \pi) \in \Symm_{k+1} \times \Symm_k$ and integers $i, j$ such that $1 \leq i < j \leq k + 1$ and $w(i) < w(j)$.  Then $(1,w(j), i)$ is not identically zero in the set of $(\sigma, \pi)$-augmented hypermatrices if and only if $(i,j)$ is an inversion of $\pi$ (equivalently, $j \leq k$ and $\pi(j) < \pi(i)$). Furthermore if $(i,j)$ is an inversion of $\pi$, then the $(1,w(j), i)$-entry is given by the polynomial
\[x_{\pi(j), \pi(i)} + y_{i,j} + \sum_{t \in S_{i,j}}x_{\pi(j), \pi(t)}y_{i,t}\] 
where the $x$- and $y$-variables are as in Definition~\ref{def:variables} and
\[S_{i,j} = \{ t \mid (i, t) \in \Inv(\pi) \text{ and } (\pi(j), \pi(t)) \in \Inv(\sigma)\}.\]
\end{prop}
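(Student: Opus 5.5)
We compute the front-face entry directly from the matrix formula for $(A,A')\circ E$. Writing $H=(A,(A')\transpose)\circ E$ with $A\in\sigma_*$ and $A'\in{}^*\pi$, the front face is $A\,I'\,A'$, where $I'$ is the $(k+1)\times k$ matrix with ones on the main diagonal. Hence
\[
H_{1,r,i}=\sum_{\ell=1}^{k} A_{r,\ell}A'_{\ell,i}.
\]
We take $r=w(j)=\sigma(\ol{\pi}(j))$. The plan is to determine exactly which terms in this sum can be nonzero. Then we translate the surviving positions into the variables of Definition~\ref{def:variables}.

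First we describe the rows and columns that appear. Row $\sigma(m)$ of $A\in\sigma_*$, with $m=\ol{\pi}(j)$, has a $1$ in column $m$ and free entries $x_{m,\ell}$ in columns $\ell>m$ with $(m,\ell)\in\Inv(\sigma)$. All its other entries are $0$. Column $i$ of $A'\in{}^*\pi$ has a $1$ in row $\pi(i)$ and free entries $y_{i,t}$ in rows $\pi(t)<\pi(i)$ with $(i,t)\in\Inv(\pi)$. All its other entries are $0$.

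So a term $A_{w(j),\ell}A'_{\ell,i}$ can be nonzero only when $\ell\ge \ol{\pi}(j)$ and $\ell\le\pi(i)$. If $j=k+1$ or $\pi(j)>\pi(i)$, then no such $\ell$ exists, and the entry is identically zero. This proves the "only if" direction.

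Now suppose $(i,j)\in\Inv(\pi)$. We split the sum according to $\ell$.
\begin{enumerate}
\item The term $\ell=\pi(j)$ gives $1\cdot A'_{\pi(j),i}=y_{i,j}$.
\item The term $\ell=\pi(i)$ gives $A_{w(j),\pi(i)}\cdot 1$. This entry is $x_{\pi(j),\pi(i)}$ when $(\pi(j),\pi(i))\in\Inv(\sigma)$.
\item An intermediate value $\pi(j)<\ell<\pi(i)$ contributes only when $\ell=\pi(t)$ with $(i,t)\in\Inv(\pi)$ and $(\pi(j),\pi(t))\in\Inv(\sigma)$. The contribution is then $x_{\pi(j),\pi(t)}y_{i,t}$. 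These are exactly the terms indexed by $t\in S_{i,j}$.
\end{enumerate}
In the third case we must also account for possible values of $\ell$ not of the form $\pi(t)$ with $t\in[k]$. For $\ell\le k$ every value is $\pi(t)$ for some $t$, and $\ell\le\pi(i)\le k$ holds throughout.

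The main obstacle is the term $x_{\pi(j),\pi(i)}$. The claimed formula includes it unconditionally, so we need $(\pi(j),\pi(i))\in\Inv(\sigma)$. This is where the hypothesis $w(i)<w(j)$ enters: it reads $\sigma(\pi(i))<\sigma(\pi(j))$, and together with $\pi(j)<\pi(i)$ it says precisely that $(\pi(j),\pi(i))$ is an inversion of $\sigma$.

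Once this is in place, the polynomial contains the linear monomial $y_{i,j}$, and that variable appears in no other term. So the polynomial is not identically zero, which gives the "if" direction. The last step is to check that the index conventions of Definition~\ref{def:variables} match: $x_{a,b}$ sits at position $(\sigma(a),b)$, and $y_{a,b}$ sits at position $(\pi(b),a)$.
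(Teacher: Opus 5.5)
Your proposal is correct and follows essentially the same route as the paper. Both arguments expand the entry as $\sum_{\ell} A_{w(j),\ell}A'_{\ell,i}$ and use the supports of row $w(j)$ of $A\in\sigma_*$ and column $i$ of $A'\in{}^*\pi$ to confine $\ell$ to $[\ol{\pi}(j),\pi(i)]$; they then read off the endpoint terms $y_{i,j}$ and $x_{\pi(j),\pi(i)}$ (the latter via $w(i)<w(j)$) and identify the intermediate terms with $t\in S_{i,j}$ through $\ell=\pi(t)$.
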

\begin{proof}
Let $A$ be a generic element of $\sigma_*$ and let $A'$ be a generic element of ${}^*\pi$.
Therefore, by definition, the $(1, w(j), i)$-entry of the $(\sigma, \pi)$-augmented hypermatrix $(A, (A')\transpose) \circ E$ is
\begin{equation}\label{(1, w(j), i)-entry}
\sum_{\ell = 1}^k A_{w(j), \ell} A'_{\ell, i}.
\end{equation}
By the definition of $\sigma_*$ and $w$, we have that
$1 = A_{w(j), \sigma\inv w(j)} = A_{w(j), \ol{\pi}(j)}$ is the unique $1$ in row $w(j)$ of $A$ and that $A_{w(j), \ell} = 0$ for $\ell < \ol{\pi}(j)$.  Similarly, $A'_{\pi(i), i} = 1$ is the unique $1$ in column $i$ of $A'$ and $A'_{\ell, i} = 0$ for $\ell > \pi(i)$.  It follows immediately that if $\pi(i) < \ol{\pi}(j)$, then every term in the sum \eqref{(1, w(j), i)-entry} is equal to $0$, and consequently that the $(1, w(j), i)$-entry of $(A, (A')\transpose)\circ E$ is identically equal to $0$ in this case.

Now suppose $\pi(i) > \ol{\pi}(j)$.  Then $j < k + 1$ and $(i, j)$ is an inversion of $\pi$. Then the formula \eqref{(1, w(j), i)-entry} for the $(1, w(j), i)$-entry of $(A, (A')\transpose) \circ E$ can be rewritten as
\begin{align*}
\sum_{\ell = \pi(j)}^{\pi(i)} A_{w(j), \ell} A'_{\ell, i} & =
A_{w(j), \pi(j)} A'_{\pi(j), i}
+ \Biggl(\sum_{\ell = \pi(j) + 1}^{\pi(i) - 1} A_{w(j), \ell} A'_{\ell, i}\Biggr)
+ A_{w(j), \pi(i)} A'_{\pi(i), i} \\
& = A'_{\pi(j), i} + A_{w(j), \pi(i)} + \sum_{\ell = \pi(j) + 1}^{\pi(i) - 1} A_{w(j), \ell} A'_{\ell, i}.\end{align*} 
Since $(i,j)$ is an inversion of $\pi$, $A'_{\pi(j), i} = y_{i,j}$. Also, since $w(i) < w(j)$, we have \[\sigma(\pi(i)) = w(i) < w(j) = \sigma(\pi(j)),\] so that $(\pi(j), \pi(i))$ is an inversion of $\sigma$. Therefore $A_{w(j), \pi(i)} = x_{\pi(j), \pi(i)}$. Thus we can further simplify our expression for the $(1, w(j), i)$-entry of $(A, (A')\transpose) \circ E$ to 
\[
x_{\pi(j), \pi(i)} + y_{i,j} + \sum_{\ell = \pi(j) + 1}^{\pi(i) - 1} A_{w(j), \ell} A'_{\ell, i} \,,
\]
and in particular we see that this entry is not identically zero.  This completes the proof of the ``if and only if'' part of the statement.  To finish, we need to identify the nonzero terms $A_{w(j), \ell} A'_{\ell, i}$ that appear in the previous sum.  

By the definition of ${}^*\pi$, we know for $\ell < \pi(i)$ that the entry $A'_{\ell, i}$ is generically nonzero if and only if $(i, \pi\inv(\ell))$ is an inversion of $\pi$, and so in particular that $i < \pi\inv(\ell)$.  In this case, the entry is $y_{i, \pi\inv(\ell)}$. Similarly, for $\ell > \pi(j)$, the entry $A_{w(j), \ell} = A_{\sigma(\pi(j)), \ell}$ is generically nonzero if and only if $(\pi(j), \ell)$ is an inversion of $\sigma$, or equivalently if $\sigma(\ell) < w(j)$, and in this case the entry is $x_{\pi(j), \ell}$.  Therefore, defining $t = \pi\inv(\ell)$, the summand $A_{w(j), \ell} A'_{\ell, i}$ in the last displayed equation above is nonzero exactly when
\[
(i, t) \in \Inv(\pi) 
\quad \text{ and } \quad
(\pi(j), \pi(t)) \in \Inv(\sigma),
\]
and in this case its value is $x_{\pi(j), \pi(t)} y_{i, t}$.  This completes the proof.
\end{proof}

We now do a similar analysis for the back face.  The corresponding situation is illustrated in Figure~\ref{fig:complicated}(b).

\begin{prop}\label{prop:potentially bad entries in back face}
Fix permutations $(\sigma, \pi) \in \Symm_{k+1} \times \Symm_k$ and integers $i, j$ such that $1 \leq i < j \leq k + 1$ and $wc(i) < wc(j)$. Then the $(2, wc(j), i)$-entry is not identically zero in the set of $(\sigma, \pi)$-augmented hypermatrices if and only if $(i, j)$ is an inversion of $\ol{\pi}c$.  This is also equivalent to the condition that either $(i, j)$ is an inversion of $\pi$ or $j = k+1$.
Furthermore, if $(i, j)$ is an inversion of $\pi$ then the $(2, wc(j), i)$-entry is given by
\[x_{\ol{\pi} c(j),\ol{\pi} c(i)} + y_{i,j} + \sum_{t \in S_{i,j}'}x_{\ol{\pi} c(j), \ol{\pi} c(t)}y_{i,t},\]
while if $j = k + 1$ then the $(2, wc(j), i)$-entry is given by
\[
x_{\ol{\pi} c(j),\ol{\pi} c(i)} + \sum_{t \in S_{i,j}'}x_{\ol{\pi} c(j), \ol{\pi} c(t)}y_{i,t},
\]
where in both cases \[S'_{i, j} = \left\{t \mid (i,t) \in \Inv(\pi) \text{ and } (\ol{\pi} c(j), \ol{\pi} c(t)) \in \Inv(\sigma)\right\}.\]
\end{prop}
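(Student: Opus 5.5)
The plan is to mirror the proof of Proposition~\ref{prop:potentially bad entries in front face}, now for the back face. Let $A$ be a generic element of $\sigma_*$ and $A'$ a generic element of ${}^*\pi$. By the concrete formula following Definition~\ref{def:augHM}, the $(2, wc(j), i)$-entry of $(A,(A')\transpose)\circ E$ is
\[
\sum_{\ell=2}^{k+1} A_{wc(j),\ell}\, A'_{\ell-1,i}.
\]
As in the proof of Proposition~\ref{prop:1s in augmented hypermatrix}, row $wc(j)$ of $A$ has its unique $1$ at column $\ol{\pi}c(j)$, since $\sigma\inv wc = \ol{\pi}c$, and it vanishes to the left of that column. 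Column $i$ of $A'$ has its $1$ at row $\pi(i)$ and vanishes below it. Because $i\le k$, we have $\ol{\pi}c(i) = \pi(i)+1$. Substituting $m=\ell$, the only possibly nonzero terms are those with $\ol{\pi}c(j) \le m \le \ol{\pi}c(i)$. So if $\ol{\pi}c(i) < \ol{\pi}c(j)$ the entry is identically zero, which gives the ``only if'' direction, since $i<j$ makes $(i,j)$ a non-inversion of $\ol{\pi}c$.

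Next I treat the case $\ol{\pi}c(i) > \ol{\pi}c(j)$ and peel off the two endpoint terms. The term $m=\ol{\pi}c(j)$ contributes $A'_{\ol{\pi}c(j)-1,i}$. The term $m=\ol{\pi}c(i)$ contributes $A_{wc(j),\ol{\pi}c(i)}$. Since $\sigma(\ol{\pi}c(i)) = wc(i) < wc(j) = \sigma(\ol{\pi}c(j))$, the pair $(\ol{\pi}c(j),\ol{\pi}c(i))$ is an inversion of $\sigma$, so the second contribution is the variable $x_{\ol{\pi}c(j),\ol{\pi}c(i)}$. This already shows the entry is not identically zero, proving the ``if'' direction. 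For the first contribution I split into two cases.
\begin{itemize}
\item If $j\le k$, then $\ol{\pi}c(j)-1 = \pi(j) < \pi(i)$, so $(i,j)\in\Inv(\pi)$ and the contribution is $y_{i,j}$.
\item If $j=k+1$, then $\ol{\pi}c(j) = 1$, so this term lies outside the range $\ell\ge 2$ and contributes nothing. This explains the missing $y$ term.
\end{itemize}

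The equivalence between ``$(i,j)\in\Inv(\ol{\pi}c)$'' and ``$(i,j)\in\Inv(\pi)$ or $j=k+1$'' comes from the same computation. For $j\le k$, $\ol{\pi}c(i) > \ol{\pi}c(j)$ is equivalent to $\pi(i)+1>\pi(j)+1$. For $j=k+1$, $\ol{\pi}c(j)=1$ is smaller than everything, so $(i,k+1)$ is always an inversion.

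For the middle terms, with $\ol{\pi}c(j)<m<\ol{\pi}c(i)$, I set $t=\pi\inv(m-1)$, so that $m = \ol{\pi}c(t)$ and $m-1<\pi(i)$. The entry $A'_{m-1,i}$ is generically nonzero exactly when $(i,t)\in\Inv(\pi)$, and it then equals $y_{i,t}$. The entry $A_{wc(j),m}$, with $m$ to the right of the $1$ in that row, is nonzero exactly when $(\ol{\pi}c(j), m)\in\Inv(\sigma)$, and it then equals $x_{\ol{\pi}c(j),\ol{\pi}c(t)}$. This yields the sum over $S'_{i,j}$.

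I expect the only delicate step to be the index bookkeeping. Specifically, I must check that the constraint $m>\ol{\pi}c(j)$ is automatically implied by the $\sigma$-inversion condition, and that $m<\ol{\pi}c(i)$ is implied by $(i,t)\in\Inv(\pi)$. Only then does the summation range reduce exactly to $S'_{i,j}$ with no extra conditions.
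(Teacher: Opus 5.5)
Your proposal is correct and follows essentially the same argument as the paper. Both bound the sum by the positions of the $1$s in row $wc(j)$ of $A$ and column $i$ of $A'$, isolate the linear term $x_{\ol{\pi}c(j),\ol{\pi}c(i)}$ via $wc(i)<wc(j)$, use $\ol{\pi}c(k+1)=1$, and substitute $t=\pi\inv(\ell-1)$ for the middle terms; the only difference is cosmetic, in that you handle $j\le k$ and $j=k+1$ uniformly by peeling off the endpoint $\ell=\ol{\pi}c(j)$, whereas the paper defers $j\le k$ to the front-face proposition and writes out only $j=k+1$.
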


\begin{proof}
Let $A$ be a generic element of $\sigma_*$ and let $A'$ be a generic element of ${}^*\pi$.  Therefore, by definition, the $(2, wc(j), i)$-entry of the $(\sigma, \pi)$-augmented hypermatrix $(A, (A')\transpose) \circ E$ is
\begin{equation}\label{(2, wc(j), i)-entry}
\sum_{\ell = 2}^{k + 1} A_{wc(j), \ell} A'_{\ell - 1, i}.
\end{equation}
By the definition of $\sigma_*$, $A_{wc(j), \ol{\pi}c(j)} = A_{wc(j), \sigma\inv wc(j)} = 1$ and $A_{wc(j), \ell} = 0$ if $\ell < \ol{\pi}c(j)$.  Similarly, by the definition of ${}^*\pi$, $A'_{\pi(i), i} = 1$ and $A'_{\ell - 1, i} = 0$ if $\ell - 1 > \pi(i)$.  Furthermore, since $i < k + 1$, we have $\ol{\pi}c (i) = (1 \ 2 \ \cdots \ k + 1) \ol{\pi}(i) = \pi(i) + 1$, and so the previous condition $\ell - 1 > \pi(i)$ can be rewritten as $\ell > \ol{\pi}c(i)$.  It follows immediately that if $\ol{\pi}c (i) < \ol{\pi}c(j)$, then every term in the sum \eqref{(2, wc(j), i)-entry} is equal to $0$, and consequently that the $(2, wc(j), i)$-entry of $(A, (A')\transpose)\circ E$ is identically equal to $0$ in this case.  

Now suppose $\ol{\pi}c(i) > \ol{\pi}c(j)$, so $(i, j)$ is an inversion of $\ol{\pi}c$.  Since $wc(i) < wc(j)$ (by hypothesis) and $wc = \sigma (\ol{\pi}c)$ (by definition), this means that $(\ol{\pi}c(j), \ol{\pi}c(i))$ is an inversion of $\sigma$, and $A_{wc(j), \ol{\pi}c(i)} = x_{\ol{\pi}c(j), \ol{\pi}c(i)}$.  Since $A'_{\ell - 1, i} = 1$ when $\ell = \pi(i) + 1 = \ol{\pi}c(i)$ and $A'_{\ell - 1, i} = 0$ for $\ell > \pi(i) + 1$,  the formula \eqref{(2, wc(j), i)-entry} for the $(2, wc(j), i)$-entry of $(A, (A')\transpose)\circ E$ can be rewritten as 
\[
x_{\ol{\pi}c(j), \ol{\pi}c(i)} + \sum_{\ell = 2}^{\pi(i)} A_{wc(j), \ell} A'_{\ell - 1, i}.
\]
In particular, we see that this entry is not identically zero in this case.  This establishes the first equivalence, i.e., if $(i,j)$ is an inversion of $\overline{\pi}c$ then the $(2,wc(j), i)$-entry is nonzero.  

For the second equivalence, observe first that $\ol{\pi}c(k + 1) = 1$ and so $(i, k + 1) \in \Inv(\ol{\pi}c)$ for all $1 \leq i \leq k$.  Furthermore, if $j \leq k$ then $\ol{\pi}c(j) = \pi(j) + 1$ and $\ol{\pi}c(i) = \pi(i) + 1$, so $(i, j)$ is an inversion of $\ol{\pi}c$ if and only if $(i, j)$ is an inversion of $\pi$.  This establishes the second equivalence.  

To finish, we need to identify the indices $\ell \in [2, \pi(i)]$ that give rise to nonzero terms $A_{wc(j), \ell} A'_{\ell - 1, i}$ in the last displayed equation above.

If $j < k + 1$ and $(i, j) \in \Inv(\ol{\pi} c)$, then the rest of the proof is similar to the proof of Proposition~\ref{prop:potentially bad entries in front face} after shifting everything by $c$: define $t = \pi\inv(\ell - 1)$ and use the fact that $\ol{\pi}c(t) = 1 + \ol{\pi}(t)$ for $1 \leq t \leq k$.  So suppose instead that $j = k + 1$.  In that case, $\pi c(j) = 1$ and $wc(j) = \sigma(1)$, and the $\ell$th term in the last displayed equation is nonzero exactly when both $\sigma(\ell) < \sigma(1)$ (in which case $A_{wc(j), \ell} = A_{\sigma(1), \ell} = x_{1, \ell} = x_{\ol{\pi}c(j), \ell}$) and $\pi\inv(\ell - 1) > i$ (in which case $A'_{\ell - 1, i} = y_{i, \pi\inv(\ell - 1)} = y_{i, c\inv\pi\inv(\ell)}$).  Defining $t = c\inv\pi\inv(\ell) = \pi\inv(\ell - 1)$, the conditions in the preceding sentence become $wc(t) < wc(j)$ and $t > i$, while the restriction $2 \leq \ell \leq \pi(i)$ becomes $\ol{\pi} c(j) < \pi c(t) < \pi c(i)$.  These conditions are equivalent to the conditions that $(i, t) \in \Inv(\pi) = \Inv(\pi c) \cap [k]^2$ and $(\pi c(t), \pi c(j)) \in \Inv(\sigma)$. This completes the proof. 
\end{proof}

Next, we define a directed graph structure on the variables that appear in a generic $(\sigma, \pi)$-augmented hypermatrix.  In the final proof of Theorem~\ref{thm:weak form of conjecture}, this structure will guide us in choosing a good order to solve for the variables to respect a given plane partition, as in Example~\ref{eg:solve}.

\begin{definition}\label{def:D_sigma, pi}
Fix any $\pi \in \Symm_{k}$ and $\sigma \in \Symm_{k+1}$.  Let 
    \[
    Y_\pi = \{ y_{i,j} : (i,j) \in \Inv(\pi)\}
    \]
be the set of variables in the generic element of $^*\pi$, and let 
 \[
 X_{\sigma} = \{ x_{i,j} : (i,j) \in \Inv(\sigma)\}
 \]
be the set of variables in the generic element of $\sigma_*$.
Define a digraph $D_{\sigma,\pi}$ as follows: the nodes of $D_{\sigma, \pi}$ are the variables $X_{\sigma} \cup Y_\pi$.  Two variables $z_1$, $z_2$ are connected by an arc $z_1 \to z_2$ if either of the following happen: there is some $(i, j) \in \Inv(\pi)$ such that, in the $(1, w(j), i)$-entry of the generic $(\sigma, \pi)$-augmented hypermatrix given by Proposition~\ref{prop:potentially bad entries in front face}, $z_2 = y_{i,j}$ is the element of $Y_\pi$ appearing as a linear term and $z_1$ is any of the other variables that appear; or, there is some $(i, j) \in \Inv(\ol{\pi}c)$ such that, in the $(2, wc(j), i)$-entry of the generic $(\sigma, \pi)$-augmented hypermatrix given by Proposition~\ref{prop:potentially bad entries in back face}, $z_2 = x_{\ol{\pi} c(j), \ol{\pi} c(i)}$ is the element of $X_\sigma$ appearing as a linear term and $z_1$ is any of the other variables that appear.
\end{definition}

\begin{example}
The digraph $D_{52134, 2314}$ corresponding to Example~\ref{ex:respect} contains the following arcs:
\begin{itemize}
 \item $x_{1, 2} \to y_{1,3}$, coming from entry $(1, 5, 1)$, with $(i, j) = (1, 3)$,
 \item $x_{1, 3} \to y_{2,3}$, coming from entry $(1, 5, 2)$, with $(i, j) = (2, 3)$,
 \item $y_{1,3} \to x_{2, 3}$, coming from entry $(2, 2, 1)$, with $(i, j) = (1, 3)$,
 \item $x_{1, 2} \to x_{1, 3}$ and $y_{1,3} \to x_{1, 3}$, coming from entry $(2, 5,1)$, with $(i, j) = (3, 5)$, and 
 \item $x_{1, 2} \to x_{1, 4}$ and $y_{2,3} \to x_{1, 4}$, coming from entry $(2, 5, 2)$, with $(i, j) = (2, 5)$.
\end{itemize} 
\end{example}

By checking the definitions of $S_{i, j}$ and $S'_{i, j}$ in Propositions~\ref{prop:potentially bad entries in front face} and~\ref{prop:potentially bad entries in back face}, we see that every arc in $D_{\sigma, \pi}$ belongs to one of the following six cases: for some $(i, j) \in \Inv(\pi)$ such that $w(i) < w(j)$, it is of the form
\begin{enumerate}[label = (\alph*)]
\item{ $x_{\pi(j), \pi(i)} \rightarrow y_{i,j}$},
\end{enumerate}
or there is a $t \in [k]$ with $\pi(t) < \pi(i)$  for which it is of the form
\begin{enumerate}[label = (\alph*)]\addtocounter{enumi}{1}
   \item $x_{\pi(j), \pi(t)} \rightarrow y_{i,j}$ or
\item $y_{i,t} \rightarrow y_{i,j}$ with $\pi(j) < \pi(t)$,
\end{enumerate}
or for some $(i, j) \in \Inv(\ol{\pi}c)$ such that $wc(i) < wc(j)$ it is of the form
\begin{enumerate}[label = (\alph*)]\addtocounter{enumi}{3}
\item{$y_{i,j} \rightarrow x_{\ol{\pi} c(j), \ol{\pi} c(i)}$},
\end{enumerate}
or there is a $t \in [k]$ with $\pi(t) < \pi(i)$ for which it is of the form
\begin{enumerate}[label = (\alph*)]\addtocounter{enumi}{4}
  \item $y_{i,t} \rightarrow x_{\ol{\pi} c(j), \ol{\pi} c(i)}$ or
   \item $x_{\ol{\pi} c(j), \ol{\pi} c(t)} \rightarrow x_{\ol{\pi} c(j), \ol{\pi} c(i)}$.
\end{enumerate}

Our next result will be used in the final stage of the proof to produce a linear order on the variables that we can use to solve for them one-by-one, as in Example~\ref{eg:solve}.

\begin{prop}\label{prop:acyclic}
For any permutations $\sigma \in \Symm_{k + 1}$, $\pi \in \Symm_k$, the digraph $D_{\sigma, \pi}$ is acyclic.
\end{prop}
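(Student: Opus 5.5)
The plan is to exhibit a potential function $\phi$ on the nodes of $D_{\sigma,\pi}$ that strictly increases along every arc. A directed cycle would then have to return to its starting value, which is impossible, so $D_{\sigma,\pi}$ is acyclic. The six arc types (a)--(f) listed just before the proposition make this a finite case check.

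To each variable we attach an integer interval of ``column indices''. To $x_{a,b}$, with $(a,b)\in\Inv(\sigma)$, we attach $[a,b]$. To $y_{i,j}$, with $(i,j)\in\Inv(\pi)$, we attach $[\pi(j),\pi(i)]$; these are the row indices between the $1$ in column $j$ and the $1$ in column $i$ of ${}^*\pi$. Set
\[
\phi(x_{a,b}) = \bigl(b-a,\ 2a\bigr), \qquad \phi(y_{i,j}) = \bigl(\pi(i)-\pi(j),\ 2\pi(j)+1\bigr),
\]
and order these values lexicographically. The main claim is that the first coordinate (the interval length) weakly increases along every arc, and that it stays constant only along arcs of types (a) and (d), where the second coordinate strictly increases.

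The verification goes type by type, using $\ol{\pi}c(t)=\pi(t)+1$ for $t\le k$.
\begin{itemize}
\item[(a)] $x_{\pi(j),\pi(i)}\to y_{i,j}$: both lengths equal $\pi(i)-\pi(j)$, and the second coordinate goes from $2\pi(j)$ to $2\pi(j)+1$.
\item[(b)] $x_{\pi(j),\pi(t)}\to y_{i,j}$: here $\pi(j)<\pi(t)<\pi(i)$, so the length goes from $\pi(t)-\pi(j)$ to the larger $\pi(i)-\pi(j)$.
\item[(c)] $y_{i,t}\to y_{i,j}$: the length goes from $\pi(i)-\pi(t)$ to the larger $\pi(i)-\pi(j)$, since $\pi(j)<\pi(t)$.
\item[(d)] $y_{i,j}\to x_{\ol{\pi}c(j),\ol{\pi}c(i)}$: this arc only exists when $y_{i,j}$ does, so $j\le k$. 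The target interval is $[\pi(j)+1,\pi(i)+1]$, with the same length, and the second coordinate goes from $2\pi(j)+1$ to $2\pi(j)+2$.
\item[(e)] $y_{i,t}\to x_{\ol{\pi}c(j),\ol{\pi}c(i)}$: the length goes from $\pi(i)-\pi(t)$ to $\pi(i)+1-\ol{\pi}c(j)$. This is strictly larger because $\ol{\pi}c(t)=\pi(t)+1>\ol{\pi}c(j)$, which follows from $(\ol{\pi}c(j),\ol{\pi}c(t))\in\Inv(\sigma)$ in the definition of $S'_{i,j}$. This covers the case $j=k+1$ too.
\item[(f)] $x_{\ol{\pi}c(j),\ol{\pi}c(t)}\to x_{\ol{\pi}c(j),\ol{\pi}c(i)}$: the left endpoint is shared and $\ol{\pi}c(t)<\ol{\pi}c(i)$, so the length strictly increases.
\end{itemize}

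The main obstacle is pinning down the strict inequalities among the indices, namely $\pi(j)<\pi(t)<\pi(i)$ and $\ol{\pi}c(j)<\ol{\pi}c(t)$. These have to be extracted carefully from the summation ranges in the proofs of Propositions~\ref{prop:potentially bad entries in front face} and~\ref{prop:potentially bad entries in back face}, where $\ell$ runs strictly between the positions of the two $1$s, including the boundary case $j=k+1$ with $\ol{\pi}c(j)=1$. I would re-derive each of these inequalities directly from those ranges rather than from the abbreviated arc list. Once they are established, $\phi$ is strictly increasing along all arcs and the proposition follows. As a by-product, this gives an explicit linear extension: solve for variables in increasing order of $\phi$.
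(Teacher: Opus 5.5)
Your proof is correct. It follows a different route from the paper's, mainly in the choice of potential.

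\textbf{The paper's argument.} The paper uses the single integer weight $\wt(x_{a,b}) = b$, $\wt(y_{i,j}) = \pi(i)$. This is exactly the right endpoint of your intervals. It is only weakly increasing: it is constant on arcs of types (a) and (c), and strict on the $Y_\pi \to X_\sigma$ arcs (d) and (e). The paper therefore has to split walks into cases:
\begin{enumerate}[label=(\roman*)]
\item walks inside $Y_\pi$ are excluded because $\pi$ of the second index strictly decreases along (c)-arcs;
\item walks inside $X_\sigma$ are excluded because the second index strictly increases along (f)-arcs;
\item a cycle visiting both $X_\sigma$ and $Y_\pi$ must use a strict $Y_\pi \to X_\sigma$ arc.
\end{enumerate}

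\textbf{Your argument.} Your lexicographic potential (interval length, then the interleaved left endpoint) strictly increases along all six arc types at once. So acyclicity is immediate and no case split on walks is needed. In addition, $\phi$ is injective: the pair determines the interval, and the parity of the second coordinate separates $x$- from $y$-variables. Ordering the variables by $\phi$ therefore directly gives the linear extension used in the proof of Theorem~\ref{thm:power of q}.

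\textbf{The inequalities you flagged.} The ones you call the main obstacle are immediate from the definitions of $S_{i,j}$ and $S'_{i,j}$, including the case $j=k+1$. The inequalities $\pi(j)<\pi(t)$ in (b) and $\ol{\pi}c(j)<\ol{\pi}c(t)$ in (e) only say that the first index of an inversion of $\sigma$ is smaller than the second. So you do not need to re-derive them from the summation ranges.
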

\begin{proof}
Consider any (directed) walk $W$ in $D_{\sigma, \pi}$.  We seek to show that $W$ cannot be a cycle.

If $W$ visits only vertices in $X_\sigma$, then the arcs it follows all belong to case (f).  Each such arc is of the form $x_{\ol{\pi} c(j), \ol{\pi} c(t)} \to x_{\ol{\pi} c(j), \ol{\pi} c(i)}$ with $\ol{\pi} c(t)  = \pi(t) + 1 < \pi(i) + 1 = \ol{\pi} c(i)$; since the second coordinate strictly increases at each step, no such walk can form a cycle.

Similarly, if $W$ visits only vertices in $Y_\pi$, then the arcs it follows all belong to case (c).  Each such arc is of the form $y_{i, t} \to y_{i, j}$ with $\pi(j) < \pi(t)$; since the value of $\pi$ applied to the second index strictly decreases at each step, no such walk can form a cycle.

Otherwise, $W$ passes through vertices in both $X_\sigma$ and $Y_\pi$.  Without loss of generality, we may assume that $W$ starts and ends in $Y_\pi$, so for some $m \geq 1$ we can write $W$ as
 \[W_1 \to W'_1 \to W_2 \to W'_2 \to \cdots \to W_m \to W'_m \to W_{m + 1}\]
where $W_1, W_2, \ldots, W_{m + 1}$ are walks (possibly of length $0$) on the subgraph of $D_{\sigma, \pi}$ induced by $Y_\pi$ and $W'_1,W'_2,\ldots, W'_m$ are walks on the subgraph induced by $X_{\sigma}$.  We define a weight function $\wt: X_{\sigma}\cup Y_\pi \to \ZZ_{>0}$ by 
 \[
   \wt(y_{i,j}) = \pi(i) 
   \qquad \text{ and } \qquad
   \wt(x_{i,j}) = j.
 \]
By checking each of the six cases into which arcs of $D_{\sigma, \pi}$ can fall, we see that if $z_1 \to z_2$ in $D_{\sigma, \pi}$ then $\wt(z_1) \leq \wt(z_2)$, and moreover that if $z_1 \in Y_\pi$ and $z_2 \in X_\sigma$ then actually $\wt(z_1) < \wt(z_2)$ (specifically parts (d) and (e) of the definition, since $\ol{\pi} c(i) = \pi(i) + 1$ for $i \in [k]$).  Since $W$ contains at least one edge from a $y$-variable to an $x$-variable, it also cannot be a cycle.  This completes the proof.
\end{proof}

\begin{thm}\label{thm:power of q}
For any permutations $(\sigma, \pi) \in \Symm_{k + 1} \times \Symm_k$ and any plane partition $P$ of format $2 \times (k + 1) \times k$, the number of $(\sigma, \pi)$-augmented hypermatrices over $\FF_q$ that respect $P$ is either $0$ or a power of $q$, where in the latter case the exponent depends on $\sigma$, $\pi$, and $P$ but is independent of $q$.
\end{thm}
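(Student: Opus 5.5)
By Proposition~\ref{prop:1s in augmented hypermatrix}, if the hyperrook placement $(m_\sigma, m_{\pi\inv})\circ E$ does not respect $P$, then the count is $0$. So assume it does respect $P$. Then every forced $1$ of a $(\sigma,\pi)$-augmented hypermatrix lies outside $P$. The plan is to write the condition ``respects $P$'' as a system of polynomial equations in the variables $X_\sigma \cup Y_\pi$, one equation for each position of $P$ whose entry is not identically zero. We then show that this system can be solved triangularly along a topological order of the acyclic digraph $D_{\sigma,\pi}$ from Proposition~\ref{prop:acyclic}, exactly as in Example~\ref{eg:solve}.

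The first step identifies which positions of $P$ give equations. Take a position $(1, r, i)$ in $P$. Since the rook placement respects $P$, we have $r \neq w(i)$. Write $r = w(j)$. Entries with $w(j) < w(i)$ lie above the $1$ in column $i$. Because $P$ is a partition shape, a position in $P$ forces every position below it in that column to be in $P$, and this includes $(1,w(i),i)$, which is a contradiction. Hence $w(j) > w(i)$. If $j < i$, the entry is identically zero: column $i$ of the front face has its top nonzero entry at the pivot and zeros to the left are forced by the shapes of $\sigma_*$ and ${}^*\pi$. This needs a quick check along the lines of the proof of Proposition~\ref{prop:potentially bad entries in front face}. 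So only the situation of Proposition~\ref{prop:potentially bad entries in front face} with $(i,j)\in\Inv(\pi)$ contributes. That equation reads $y_{i,j} = -x_{\pi(j),\pi(i)} - \sum_{t\in S_{i,j}} x_{\pi(j),\pi(t)}y_{i,t}$. The back face is handled the same way using Proposition~\ref{prop:potentially bad entries in back face}, where each equation expresses $x_{\ol{\pi}c(j),\ol{\pi}c(i)}$ in terms of the other variables in that entry. Let $T \subseteq X_\sigma\cup Y_\pi$ be the set of these ``solved-for'' variables.

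The key structural claim is that distinct equations solve for distinct variables. For front entries this is clear, since $y_{i,j}$ determines $(i,j)$, and hence the position $(1,w(j),i)$. For back entries, $x_{a,b}$ with $a = \ol{\pi}c(j)$ and $b = \ol{\pi}c(i)$ determines $j$ and $i$, since $\ol{\pi}c$ is a bijection. In addition, the front equations solve only for $y$'s and the back equations only for $x$'s. So each variable in $T$ appears as the linear ``head'' of exactly one equation. By the definition of $D_{\sigma,\pi}$, every other variable in that equation has an arc into the head. Now fix a topological order of $D_{\sigma,\pi}$, using Proposition~\ref{prop:acyclic}. Assign arbitrary values in $\FF_q$ to the variables not in $T$, and process the variables of $T$ in topological order. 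When a head is reached, all the variables its equation depends on have already been assigned, so the equation determines the head uniquely. Its right-hand side is a polynomial in earlier variables with coefficient $1$ on the head, so it can always be solved. This yields exactly one solution for each assignment of the free variables, so the count is $q^{|X_\sigma|+|Y_\pi|-|T|}$. The exponent depends only on $\sigma$, $\pi$ and $P$.

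I expect the main obstacle to be the first step: checking that every position of $P$ that is not identically zero falls under Propositions~\ref{prop:potentially bad entries in front face} and~\ref{prop:potentially bad entries in back face}. In particular, one must confirm that no position of $P$ lies above or to the right of a rook in a way that produces an equation without a linear head. The argument has to use the monotonicity of $\lambda$ and $\mu$ in rows and columns. In the back face it also has to handle the $j = k+1$ case, where $y_{i,j}$ is absent but the $x$-head still exists. The one subtlety in the solving step is a head variable that also appears on the right-hand side of its own equation. Acyclicity rules this out, because such an occurrence would create a loop in $D_{\sigma,\pi}$.
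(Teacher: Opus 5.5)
Your plan follows the paper's proof. You dispose of the case where $(m_\sigma,m_{\pi\inv})\circ E$ does not respect $P$ via Proposition~\ref{prop:1s in augmented hypermatrix}. You then solve the equations coming from non-identically-zero entries inside $P$ along a linear extension of $D_{\sigma,\pi}$, each equation for its own distinct linear head. The paper takes for granted that every such entry is one of those described in Propositions~\ref{prop:potentially bad entries in front face} and~\ref{prop:potentially bad entries in back face}. You try to justify this, and one step of your justification is false.

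When $j<i$ and $w(j)>w(i)$, the entry $(1,w(j),i)$ is \emph{not} identically zero in general. In $\sum_\ell A_{w(j),\ell}A'_{\ell,i}$, the term $\ell=\pi(i)$ equals $A_{w(j),\pi(i)}\cdot 1$. Whenever $\pi(j)<\pi(i)$, this is the free variable $x_{\pi(j),\pi(i)}$. For instance, take $k=2$, $\sigma=213$, $\pi=12$. The generic front face is $\begin{bmatrix}0&1\\1&x_{1,2}\\0&0\end{bmatrix}$, and position $(1,w(1),2)=(1,2,2)$ holds $x_{1,2}$. The back face fails the same way: $\sigma=132$, $\pi=12$ puts $x_{2,3}$ at $(2,wc(1),2)=(2,3,2)$. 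So the ``quick check'' you defer would not go through.

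The correct reason these positions give no equation is that they cannot lie in $P$. If $j<i$, then $j\le k$, and $(1,w(j),i)$ lies in the same row as the rook $(1,w(j),j)$, to its right. Since $\lambda_j\ge\lambda_i$, having $(1,w(j),i)\in P$ would force $(1,w(j),j)\in P$. That contradicts the hyperrook placement respecting $P$. In the back face, for $j<i\le k$, the position $(2,wc(j),j)$ is a rook, and $\mu_j\ge\mu_i$ gives the same contradiction.

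Your closing remark about monotonicity of $\lambda$ and $\mu$ in rows points at exactly this. However, it must replace the ``identically zero'' claim, not supplement it. With that repair, the rest is correct: distinct heads, solving along a topological order, and the count $q^{|X_\sigma|+|Y_\pi|-|T|}$. This is the paper's argument, written out in more detail.

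One minor point concerns the self-loop issue. Acyclicity is not what keeps a head out of its own right-hand side, since arcs of $D_{\sigma,\pi}$ come only from the \emph{other} variables of an entry. The explicit formulas are what rule it out. In the front face, $t\in S_{i,j}$ forces $\pi(t)>\pi(j)$, so $t\neq j$. In the back face, $t\in S'_{i,j}$ forces $t>i$.
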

\begin{proof}
Fix $P$ and $(\sigma, \pi)$.  If the hyperrook placement $(m_\sigma, m_{\pi\inv})\circ E$ does not respect $P$, then by Proposition~\ref{prop:1s in augmented hypermatrix} there are no $(\sigma, \pi)$-augmented hypermatrices that respect $P$.  So suppose instead that $(m_\sigma, m_{\pi\inv})\circ E$ respects $P$.

By Proposition~\ref{prop:acyclic}, the digraph $D_{\sigma, \pi}$ is acyclic; therefore it induces a partial order on the set of variables.  Choose any linear extension of this partial order.  This is a total ordering of the variables in $X_\sigma \cup Y_\pi$ such that, if $z_1$ and $z_2$ are two of these variables such that $z_1 \to z_2$ (as in Definition~\ref{def:D_sigma, pi}), then $z_1$ will precede $z_2$ in this total order.  

Let $A$ be a generic $(\sigma, \pi)$-augmented hypermatrix, and consider an entry $A_{a_1,a_2,a_3}$ of $A$ that is not identically zero and that lies inside $P$. The total order in the previous paragraph induces a total order on the variables that appear in $A_{a_1,a_2,a_3}$. By the construction of $D_{\sigma, \pi}$, the \emph{last} variable in $A_{a_1,a_2,a_3}$ with respect to the total order is a linear term.  Furthermore, it is easy to see from Definition~\ref{def:D_sigma, pi} and Propositions~\ref{prop:potentially bad entries in front face} and~\ref{prop:potentially bad entries in back face} that the last variables for different entries of $A$ are distinct.  Therefore, if we count the assignments of the variables in $X_\sigma \cup Y_\pi$ to elements of $\FF_q$ with the property that the resulting $(\sigma, \pi)$-augmented hypermatrix respects $P$, assigning values to variables in the order given by the total order, we have $q$ choices for each variable that is not the last in its entry (i.e., no restriction on the choice) and $1$ choice for each variable that is the last in its entry, if that entry lies in $P$ (namely, the (unique) value that makes the (unique) entry in which it is the last variable equal to $0$, given the previous choices).  Since the number of choices for each assignment is independent of the choices made, the total number of valid assignments is the product of the number of choices for each variable.  This is always a power of $q$ (with exponent depending only on $\sigma, \pi, P$), as claimed.
\end{proof}

We are now ready to prove Theorem~\ref{thm:weak form of conjecture}, which we restate for convenience.

\weaktheorem*

\begin{proof}
Define for each prime power $q$ and each plane partition $P$ of format $2 \times (k + 1) \times k$ the function
\begin{equation}
\label{eq:f}    
\widetilde{f}(q) = \sum_{(\sigma, \pi) \in \Symm_{k + 1} \times \Symm_k}
\# \{ (\sigma, \pi)\text{-augmented hypermatrices over $\FF_q$ that respect } P \}.
\end{equation}
By Corollary~\ref{cor:count}, the number of nondegenerate hypermatrices over $\FF_q$ that respect the plane partition $P$ is
$q^{k^2} (q - 1)^{2k}\cdot \widetilde{f}(q)$.  By Proposition~\ref{prop:1s in augmented hypermatrix}, the $(\sigma, \pi)$-summand in \eqref{eq:f} is nonzero exactly when $(m_\sigma, m_{\pi\inv})\circ E$ respects $P$; and in that case, by Theorem~\ref{thm:power of q}, the summand is a power of $q$ (whose exponent depends on $\sigma$, $\pi$, and $P$, but not on $q$).  It follows that there is a polynomial $f(x) \in \ZZ_{\geq 0}[x]$ such that $\widetilde{f}(q) = f(q)$ for all prime powers $q$, and furthermore that $f(1)$ is equal to the number of pairs $(\sigma, \pi) \in \Symm_{k + 1}\times \Symm_k$ such that the hyperrook placement $(m_\sigma, m_{\pi\inv})\circ E$ respects $P$.  Then the result follows directly from Theorem~\ref{thm:hyperrookCount}.
\end{proof}

\section{Further Research}
\label{sec:final remarks}

We end with some open questions and some ideas for future research. 

\subsection{Proving the main conjecture}
The main problem raised by our work is to prove Conjecture~\ref{mainConjecture} in full generality.  One approach would be to build on our proof of Theorem~\ref{thm:weak form of conjecture}, which establishes that the number of nondegenerate hypermatrices over $\FF_q$ that respect the plane partition $P$ is
\[
q^{k^2}(q - 1)^{2k} \sum_{\substack{(\sigma, \pi) \in \Symm_{k + 1} \times \Symm_k: \\ (m_\sigma, m_{\pi\inv})\circ E \text{ respects } P}} q^{|\Inv(\sigma)| + |\Inv(\pi)| - h(\sigma, \pi; P)}
\]
where $h(\sigma, \pi; P)$ is defined to be the number of entries in a generic $(\sigma, \pi)$-augmented hypermatrix that are not identically zero and lie inside $P$; these are characterized by Propositions~\ref{prop:potentially bad entries in front face} and~\ref{prop:potentially bad entries in back face}.  Thus, the main conjecture could be proved with ``only combinatorics'', if one could understand the quantity $h(\sigma, \pi; P)$ well enough to find a way to group terms correctly in order to give the desired factored form.  Our attempts to carry this out have thus far been unsuccessful.

\subsection{Other formats}

To what extent do our results extend to other formats?  Our work relies in an essential way on Lemma~\ref{lem:faceSum} and Theorem~\ref{thm:aitken} of Aitken, which hold respectively only for three-dimensional boundary formats and the specific format $2 \times (k + 1) \times k$.  Is there an analogous version of Lemma~\ref{lem:faceSum} for higher-dimensional hypermatrices? If so, that might allow one to extend Theorem~\ref{thm:MaxPlanePartition} (which identifies the unique maximal shape that allows nondegenerate hypermatrices for all three-dimensional boundary formats) to higher dimensions.  A simple dimension count shows that we should \emph{not} expect Theorem~\ref{thm:aitken} to extend to any larger formats;\footnote{For example, the space of $3 \times 3 \times 5$ hypermatrices is $45$-dimensional, whereas the group $\GL_3\times\GL_3\times\GL_5$ is only $9 + 9 + 25 = 43$-dimensional; the space of nondegenerate hypermatrices is presumably full-dimensional, so we should expect a two-parameter family of orbits of nondegenerate hypermatrices of this format under the larger group.} could it nonetheless still be the case that the nondegenerate hypermatrices respecting a plane partition enumerate nicely?  It seems challenging to test this question computationally.

\subsection{Intrinsic definition of hyperrook placements}\label{sec:intrinsic}

Full-rank rook placements on a board can be described as arrangements having one rook in each row and column; they can also be described as the result of multiplying the identity matrix by a permutation matrix. Our Definition~\ref{def:hyperrook placement} of hyperrook placements is analogous to the second definition.  Do the same objects have a natural description along the lines of the first definition?  

It follows immediately from Definition~\ref{def:hyperrook placement} that every hyperrook placement has a single $1$ in each column of each face and at most $1$ in each row of each face.  One is tempted to guess that the hyperrook placements are simply pairs $(S_1,S_2)$ where $S_1,S_2$ are placements of $k$ rooks on the rectangle $\mathcal{R}_{(k+1) \times k}$ with empty intersection and not sharing the same empty row.  However, it's easy to see that the number of such pairs is strictly larger than the number of hyperrook placements when $k > 2$.

There are various definitions of three-dimensional rook placement in the literature that do \emph{not} agree with ours (e.g., Latin squares, or the notion in \cite{3d-2, 3d-1, 3d-3}); typically, these are on cubical boards.

\subsection{Lower rank}

In the case of square matrices, the nondegeneracy condition (that the determinant is nonzero) can be equivalently characterized as a rank condition (that the matrix has full rank), and the notion of a rook placements, $q$-rook numbers, etc., extends to lower ranks in a natural way, placing fewer than the maximum number of rooks and thinking of rook placements as partial permutations.  One might ask whether there is similarly a lower-rank version of our theory.  There are numerous inequivalent definitions of the rank of a hypermatrix (see, e.g., \cite{tensorranks} for a clear description of four such notions).  Although these have sometimes been discussed in the context of the hyperdeterminant (e.g., \cite{Ottaviani, AY}), as far as we know, none of them has a tight connection to our notion of nondegeneracy. Moreover, basic questions about tensor rank (computing the rank of a hypermatrix, determining the orbits under the $\GL(\FF) \times \cdots \times \GL(\FF)$ action, or counting hypermatrices by rank over a finite field) are all difficult, see, e.g., \cite{rank-3, rank-4, rank-1, rank-2, rank-5}.  It is unclear whether a nice lower-rank theory can be based on any of these ideas.

Alternatively, one might hope to construct a lower-rank hyperrook theory along the following lines (mentioned briefly in the introduction): for a partition-shaped board $B$ in the $n \times n$ square, its rook numbers $r_i(B)$ (the number of ways of placing $i$ rooks on $B$) and hit numbers $h_i(B)$ (the number of ways of placing $n$ rooks on the square such that $i$ of them lie in $B$) are related by 
\begin{equation}
\label{eq:rook-hit duality}    
\sum_{i} r_i(B) \cdot (x - 1)^i \cdot (n - i)!  = \sum_{i} h_i(B) \cdot x^i
\end{equation}
(and a $q$-analogue of this identity holds for $q$-rook and $q$-hit numbers).  The definition of hit number extends immediately to our setting, by considering hyperrook placements on the $2 \times (k + 1) \times k$ box.  If we write down the generating function for these numbers, is there a natural change of basis (analogous to \eqref{eq:rook-hit duality}) so that the coefficients can be interpreted as lower-rank rook numbers?

\section*{Acknowledgements}
This paper is based in part on the first author's undergraduate honors thesis \cite{seniorthesis}.  The first author thanks the George Washington University Enosinian Scholars Program and its participants for helpful discussions.  We thank Joe Bonin and Sam F.\ Hopkins for their comments on the thesis version.  Work of the second author was supported in part by a gift from the Simons Foundation (MPS-TSM-00006960).

\bibliographystyle{alpha}
\bibliography{HMBib}

\end{document}